\numberwithin{equation}{section}
\numberwithin{algorithm}{section}
\theoremstyle{plain}
\newtheorem{theorem}{Theorem}[section]
\newtheorem{proposition}[theorem]{Proposition}
\newtheorem{lemma}[theorem]{Lemma}
\theoremstyle{definition}
\newtheorem{example}[theorem]{Example}
\newtheorem{problem}[theorem]{Problem}
\theoremstyle{remark}
\let\H\undefined
\let\S\undefined
\DeclareMathOperator{\spa}{span}
\DeclareMathOperator{\tr}{Tr}
\DeclareMathOperator{\rank}{rank}
\DeclareMathOperator{\H}{H}
\DeclareMathOperator{\S}{S}
\DeclareMathOperator{\supp}{supp}
\newcommand{\N}{\mathbb{N}}
\newcommand{\C}{\mathbb{C}}
\newcommand{\F}{\mathbb{F}}
\newcommand{\R}{\mathbb{R}}
\newcommand{\cB}{\mathcal{B}}
\newcommand{\cH}{\mathcal{H}}
\newcommand{\cM}{\mathcal{M}}
\newcommand{\cX}{\mathcal{X}}
\newcommand{\ba}{\mathbf{a}}
\newcommand{\bb}{\mathbf{b}}
\newcommand{\bc}{\mathbf{c}}
\newcommand{\be}{\mathbf{e}}
\newcommand{\bbf}{\mathbf{f}}
\newcommand{\bg}{\mathbf{g}}
\newcommand{\bu}{\mathbf{u}}
\newcommand{\bU}{\mathbf{U}}
\newcommand{\bv}{\mathbf{v}}
\newcommand{\bV}{\mathbf{V}}
\newcommand{\bW}{\mathbf{W}}
\newcommand{\bx}{\mathbf{x}}
\newcommand{\bX}{\mathbf{X}}
\newcommand{\by}{\mathbf{y}}
\newcommand{\bY}{\mathbf{Y}}
\newcommand{\bz}{\mathbf{z}}
\newcommand{\0}{\mathbf{0}}
\newcommand{\1}{\mathbf{1}}
\newcommand{\rB}{\mathrm{B}}
\newcommand{\rK}{\mathrm{K}}
\newcommand{\rS}{\mathrm{S}}
\newcommand{\rT}{\mathrm{T}}
\begin{document}
\title{On quantum Strassen's theorem}
\author[Shmuel Friedland]{Shmuel~Friedland}
\address{Department of Mathematics and Computer Science, University of Illinois at Chicago, Chicago, Illinois, 60607-7045, USA }
\email{friedlan@uic.edu}
\author[Jingtong Ge]{Jingtong~Ge}
\address{University of Chinese Academy of Sciences, Beijing 100049, China, and
University of Technology Sydney, NSW, Australia}
\email{gejingtong@amss.ac.cn}
\author[Lihong Zhi]{Lihong~Zhi}
\address{KLMM, Academy of Mathematics and Systems Science, Chinese Academy of Sciences, Beijing 100190, China, University of Chinese Academy of Sciences, Beijing 100049, China }
\email{lzhi@mmrc.iss.ac.cn}
\date{July 24, 2020}
\begin{abstract} Strassen's theorem circa 1965 gives necessary and sufficient conditions on  the existence of a probability measure on two product spaces with  given support and two marginals.  In the case where each product space is finite Strassen's theorem is reduced to a linear programming problem which can be solved using flow theory.
A density matrix of bipartite quantum system is a quantum analog of a probability matrix on two finite product spaces.  Partial traces of the density matrix are analogs of marginals. The support of the density matrix is its range.  The analog of Strassen's theorem in this case can be stated and solved using semidefinite programming.  The aim of this paper is to give analogs of Strassen's theorem to density trace class operators on a  product of two separable Hilbert spaces, where at least one of the Hilbert spaces is infinite dimensional.
\end{abstract}

\begin{abstract} Strassen's theorem circa 1965 gives necessary and sufficient conditions on  the existence of a probability measure on two product spaces with  given support and two marginals.  In the case where each product space is finite Strassen's theorem is reduced to a linear programming problem which can be solved using flow theory.
A density matrix of bipartite quantum system is a quantum analog of a probability matrix on two finite product spaces.  Partial traces of the density matrix are analogs of marginals. The support of the density matrix is its range.  The analog of Strassen's theorem in this case can be stated and solved using semidefinite programming.  The aim of this paper is to give analogs of Strassen's theorem to density trace class operators on a  product of two separable Hilbert spaces, where at least one of the Hilbert spaces is infinite dimensional.
\end{abstract}

\maketitle
 \noindent {\bf 2010 Mathematics Subject Classification.}
 15A69, 15B57, 46N50, 47B10, 81P40, 81Q10, 90C22, 90C25.

\noindent \emph{Keywords}:  Density matrices and operators, quantum marginals, quantum couplings, trace class operators, Hilbert-Schmidt operators, partial traces and weak operator convergence, SDP problems related to quantum marginals.

\section{Introduction}
Let $\mu$ be a probability measure on the discrete space $\Omega=[m]\times [n]$, where $m,n$ are positive integers and $[m]=\{1,\ldots,m\}$.  A probability measure on $\Omega$ is a nonnegative $m\times n$ matrix $A=[a_{ij}]\in\R_+^{m\times n}$, such that the sum of its entires is $1$.  Let $\1_m=(1,\ldots,1)^\top\in \R^m$.  Then the marginals: $\mu_1=A\1_n$ and $\mu_2=A^\top \1_m$ are  the probability measures on $[m]$ and $[n]$ respectively. The support of $\mu$, denoted as $\supp \mu$, is the following bipartite graph $G=(V,E)$, where $V=[m]\cup [n]$ and $E=\{(i,j), i\in[m], j\in[n], a_{ij}>0\}$.  The following inverse problem is natural:
\begin{problem}\label{margprob}
Given probability measures $\mu_1$ and $\mu_2$ on $[m]$ and $[n]$ respectively,
find necessary and sufficient conditions for existence of a probability measure $\mu$ on $[m]\times [n]$, whose support is contained in a given bipartite graph $G=([m]\cup [n],E)$ and whose marginals are $\mu_1$ and $\mu_2$.
\end{problem}
This problem is a classical problem in combinatorial optimization \cite{CCPS}, and can be solved using the standard flow theory \cite{FF62}.  See \cite{Hsu17}.   Strassen \cite{Str65} gave a solution of Problem \ref{margprob} to a measure on the Borel $\sigma$-algebra of the product of two compact metric spaces. (Strassen did not bother to state the finite space case.  Actually, Strassen considered a more general $\varepsilon\ge 0$ version of Problem \ref{margprob}.)

In a recent paper \cite{Ming}, Zhou et al.  gave necessary and sufficient conditions for the analog of Problem \ref{margprob} in the quantum setting: Let $\cH$ be a finite dimensional inner product space of dimension $n$ over the complex numbers $\C$.  We identify $\cH$ with $\C^n$ with the standard inner product $\langle \bx,\by\rangle=\by^*\bx$.
Then $\rB(\cH)$ the set of linear operators $A:\cH\to\cH$ is the algebra of $n\times n$ matrices $\C^{n\times n}$.  The set of selfadjoint operators $\rS(\cH)$ is the real space of $n\times n$ Hermitian matrices. Then $\rS_+(\cH)\supset \rS_{+,1}(\cH)$ is the cone of positive semidefinite matrices in $\rS(\cH)$ and the convex set of positive semidefinite matrices of trace $1$.  The set $\rS_{+,1}(\cH)$, which is called the space of density matrices, is the analog of the set of probability measure in quantum physics.  (It is also called the space of mixed states \cite{NC00}.)  On $\rS(\cH)$ we have a partial order $A\succeq B$ if $A-B\in \rS_+(\cH)$.
For $\rho\in \rS(\cH)$ the support of $\rho$, denoted as $\supp \rho$, is $\rho(\cH)$, i.e. the subspace spanned by the nonzero eigenvectors of $\rho$, which is the range of $\rho$.

Let $\cH_1\equiv \C^m, \cH_2\equiv \C^n$.  Then $\cH=\cH_1\otimes \cH_2\equiv \C^n\otimes \C^n\equiv \C^{m\times n}$ is called the bipartite space.  The space $\rB(\cH)$ can be viewed as $(mn)\times (mn)$ matrices $T=[t_{(i,p)(j,q)}]\in \C^{(mn)\times(mn)}$, where $i,j\in[m], p,q\in[n]$.  There are two natural contraction maps, which are called partial traces:
\begin{eqnarray*}
\tr_2 :\rB(\cH)\to \rB(\cH_1) ;\tr_2 T=\left[\sum_{p=1}^n t_{(i,p)(j,p)}\right], i,j\in [m],\\
\tr_1 :\rB(\cH)\to\rB(\cH_2) ;\tr_1 T=\left[\sum_{i=1}^m t_{(i,p)(i,q)}\right], p,q\in [n].
\end{eqnarray*}

A density matrix $\rho\in \rS_{+,1}(\cH)$ is an analog of a probability measure $\mu$ on $[m]\times [n]$.
Clearly $\rho_1=\tr_2\rho\in \rS_{+,1}(\cH_1)$ and $\rho_2=\tr_1\rho\in \rS_{+,1}(\cH_2)$
are the analogs of marginals $\mu_1$ and $\mu_2$.  
Hence the analog of Problem \ref{margprob} is the quantum marginals and  coupling  problems  \cite{BRAVYI2003,CLL13,CJK2014,YZYY2018}.
\begin{problem}\label{margproddm}  Let $\cH=\cH_1\otimes\cH_2$, where $\cH_1$ and $\cH_2$ are finite dimensional inner product spaces.
Let $\mathcal{X}\subseteq \mathcal{H}$ be a closed subspace. Given $\rho_{i}\in \rS_{+,1}(\H_i)$, $i=1,2$,  what are necessary and sufficient conditions for the existence of $\rho \in \rS_{+,1}(\cH)$, $\supp \rho\subseteq \cX$, such that $\rho_{1},\rho_{2}$ are its partial traces?
\end{problem}

This problem is a variation of the classical $2$-representability problem: Given $\rho_i\in\rS_{+,1}(\cH_i)$ for $i\in[2]$, does there exists a pure state $\rho\in\rS_{+,1}(\cH_1\otimes \cH_2)$ such that $\rho_1,\rho_2$ are its partial traces?
This problem was solved by Klyachko in \cite{Kly04} for a more general case: Namely the spectrum of the mixed bipartite state is prescribed.  The $N$-representation problem \cite{Cou60,Col63, Kly06} was considered to be in mid 90's one of ten most
prominent research challenges in quantum chemistry \cite{Sti}.
Recently, some aspects of the three partite quantum marginals problem were discussed in \cite{CLL13}.

This problem can be stated in terms of semidefinite problem (SDP): Let $P_{\cX}$ be the projection on the $\cX$. Consider the maximum problem
\begin{eqnarray*}
\max\{\tr XP_{\cX}, X\in \S_+(\cH)\}, \tr_jX=\rho_i\in\rS_{+,1}(\cH), \{i,j\}=\{1,2\}, i=1,2\}.
\end{eqnarray*}
Then Problem \ref{margproddm} is solvable if an only if the above maximum is $1$.
It is possible to convert this problem to an equivalent SDP problem where the admissible set is bounded and has an interior in $S_+(\cH)$, see Section \ref{sub:fdgenSDP}.  Thus one can use interior methods to find the maximum within a given precision $\varepsilon>0$ in polynomial time in the given data and $\varepsilon$. 

 Zhou et al. \cite{Ming} gave necessary and sufficient conditions for the solution of Problem \ref{margproddm}.
These conditions are analogous to the conditions for the solution of Problem \ref{margprob} \cite{Hsu17}. They pointed out  that quantum coupling can be used to extend quantum Hoare logic \cite{MSY11} for proving relational properties between quantum programs and further for verifying quantum cryptographic protocols and differential privacy in quantum computation \cite{ZLMS17}.  The second named author generalized some of the results of \cite{Ming} in \cite{JG19}.

The aim of this paper is to answer Problem \ref{margproddm} to the case when at least one of the Hilbert spaces is an infinite dimensional separable Hilbert space.
The most challenging and interesting parts of this paper are tackling the weak operator convergence in the trace class operators on the tensor product of two Hilbert spaces, (bipartite space), under the partial trace mapping.  As shown in Example \ref{example} the weak operator convergence is not preserved under the partial trace.  This paper
offers some tools and approaches for the quantum marginals problem.  We  hope that our results will be useful to other problems on trace class operators with partial traces.

Our main idea to solve Problem \ref{margproddm} is by stating a countable number of necessary conditions on finite dimensional Hilbert spaces.   Then to show that these conditions are sufficient using compactness arguments.   This was a successful approach in finding infinite dimensional generalizations of Choi's theorem for characterization of quantum channels \cite{Fri18}.

It turns out that the most difficult case is when  $\cH_1,\cH_2,\cX$ are infinite dimensional separable spaces.  We now outline briefly our main result in this case.

Let $\cH$ be an  infinite dimensional separable Hilbert space.  Denote by $\rB(\cH)\supset \rK(\cH)$  the space of bounded linear operators, with the operator norm $\|\cdot\|$,  and the ideal of compact operators respectively.  Let $\rS(\cH)\supset \rS_+(\cH)$ be the subspace of selfadjoint operators and the cone of positive semidefinite operators in $\rB(\cH)$.
Assume that $A\in\rK(\cH)$.  Recall that $A$ has the Schmidt decomposition, which is the singular value decomposition for the the finite dimensional $\cH$, with a nonnegative nonincreasing sequence of singular values $\|A\|=\sigma_1(A)\ge \cdots\ge\sigma_i(A)\ge \cdots \ge0$, which converges to $0$.  For $A\in \rS_+(\cH)\cap \rK(\cH)$ the Schmidt decomposition is the spectral decomposition.  For $p\in[1,\infty)$, denote by
 $\rT_p(\cH)\subset \rK(\cH) $ the Banach space of all compact operators with the $p$-Schatten norm $\|A\|_p=(\sum_{i=1}^{\infty} \sigma_i(A)^p)^{1/p}$.  The Banach space $\rT_1(\cH)$ is the space of trace class operators, which will be abbreviated to $\rT(\cH)$.  For $A\in \rT(\cH)$ the trace $\tr A$ is a bounded linear functional $A\mapsto\tr A$ satisfying $|\tr A|\le \|A\|_1$.  For $A\in \rT(\cH)\cap \rS(\cH)$, $\tr A$ is the sum of the eigenvalues of $A$.
 The cone of positive semidefinite operators in trace class is denoted as $\rT_{+}(\cH)=\rT(\cH)\cap \rS_+(\cH)$.  Note that $\|A\|_1=\tr A$ if and only if $A\in\rT_+(\cH)$.  (See Appendix A.)  Denote by $\rS_{+,1}(\cH)\subset \rT_{+}(\cH)$  the convex set of positive semidefinite trace class operators with trace $1$, i.e., the density operators.

Assume that $\cH=\cH_1\otimes \cH_2$, where $\cH_1$ and $\cH_2$ are separable Hilbert spaces.  Suppose that $\rho\in \rT(\cH)$.  Then there are two partial trace maps: $\tr_1: \rT(\cH)\to \rT(\cH_2)$ and $\tr_2 \rT(\cH)\to\rT(\cH_1)$ which are both contractions: $\|\tr_i(A)\|_1\le \|A\|_1$ for $i=1,2$, see Section \ref{sec:prilHs}.
Denote
\begin{equation}\label{defPhiid}
\Phi: \rT(\cH)\to \rT(\cH_1)\oplus\rT(\cH_2), \quad \Phi(\rho)=(\tr_2\rho,\tr_1\rho).
\end{equation}
Then $\|\Phi\|\le 2$.  Let $\Sigma=\Phi(\rT_+(\cH))$.  Then
\begin{equation}\label{defSigma}
\Sigma=\{(\rho_1,\rho_2) ~|~ \rho_1\in \rT_+(\cH_1), \rho_2 \in \rT_+(\cH_2), \,\tr\rho_1=\tr\rho_2\}.
\end{equation}
For $(\rho_1,\rho_2)\in\Sigma$, let
\begin{equation}\label{defcM}
\cM(\rho_1,\rho_2)=\Phi^{-1}(\rho_1,\rho_2)\cap\rT_+(\cH).
\end{equation}
Thus if $\rho_1,\rho_2$ are density operators then $\cM(\rho_1,\rho_2)$ is the convex set of bipartite density operates with marginals $\rho_1,\rho_2$.  Observe that $\rT_+(\cH)$ fibers over $\Sigma$: $\rT_+(\cH)=\cup_{(\rho_1,\rho_2)\in\Sigma}\cM(\rho_1,\rho_2)$.

 We now state the main result of this paper:
 \begin{theorem}\label{XinfH2fin}  Suppose that $\cH_1$ and $\cH_2$ are infinite dimensional separable Hilbert spaces.
Assume that $\rho_i\in \rS_{+,1}(\cH_i)$ for $i=1,2$.  Let $\cH=\cH_1\otimes\cH_2$.  Define on $\rT(\cH)$ the following Lipschitz convex function with respect to $\|\cdot \|_1$:
\begin{eqnarray}\label{deffuncf}
f(X)=\|\tr_2 X-\rho_1\|_1+\|\tr_1 X -\rho_2 \|_1.
\end{eqnarray}
Suppose that $\cX\subset\cH$ is infinite dimensional closed subspace, with an orthonormal basis $\bx_i, i\in\N$.  Let $\cX_n$ be the subspace spanned by $\bx_1,\ldots,\bx_n$ for $n\in\N$.
Consider the minimization problem
\begin{eqnarray}\label{defminSX1n}
\mu_n(\rho_1,\rho_2)=\inf\{f(X), \; X\in \rS_{+}(\cX_{n})\},
\end{eqnarray}
for $n\in \N$.
This infimum is attained for some $X_n\in \cX_n$ which satisfies $\|X_n\|\le 2$.
Then there exists $\rho\in \rS_{+,1}(\cH), \supp \rho\subseteq \cX$ such that  $\tr_2 \rho=\rho_1, \tr_1\rho=\rho_2$ if and only if
\begin{eqnarray}\label{munrhocond}
\lim_{n\to\infty} \mu_n(\rho_1,\rho_2)=0.
\end{eqnarray}
\end{theorem}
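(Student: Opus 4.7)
The proof proceeds in three steps. First (attainment), because each $\tr_j$ preserves trace and $\|A-B\|_1 \ge |\tr(A-B)|$ for self-adjoint $A,B$, one has $f(X) \ge 2|\tr X - 1|$, while $f(0) = \|\rho_1\|_1 + \|\rho_2\|_1 = 2$. This restricts the infimum in \eqref{defminSX1n} to the compact set $\{X \in \rS_+(\cX_n) : \tr X \le 2\}$ inside the finite-dimensional space $\rB(\cX_n)$; continuity of $f$ produces a minimizer $X_n$, and positivity forces $\|X_n\| \le \tr X_n \le 2$. For necessity, if $\rho \in \rS_{+,1}(\cH)$ exists with $\supp\rho \subseteq \cX$ and the prescribed marginals, let $P_n$ be the orthogonal projection onto $\cX_n$; since $\{\bx_i\}$ is an ONB of $\cX$ and $\supp\rho \subseteq \cX$, $P_n$ converges strongly to the projection $P_\cX$ with $P_\cX\rho P_\cX = \rho$, and a standard compression argument on the spectral decomposition of $\rho$ gives $P_n \rho P_n \to \rho$ in trace norm. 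Contractivity of the partial traces then yields $f(P_n \rho P_n) \to 0$, hence $\mu_n \to 0$.

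For sufficiency, assume $\mu_n \to 0$. The identities $\tr X_n = \tr(\tr_2 X_n)$ together with $\|\tr_2 X_n - \rho_1\|_1 \le \mu_n$ force $\tr X_n \to 1$ and, symmetrically, $\tr_j X_n \to \rho_{3-j}$ in trace norm. Thus $\{X_n\}$ is norm-bounded in $\rT(\cH)=\rK(\cH)^*$; since $\rK(\cH)$ is separable, Banach--Alaoglu yields a weak-$*$ subsequential limit, relabelled as $X_n \to \rho$. Testing $\tr(\rho A) = \lim \tr(X_n A) \ge 0$ against positive $A \in \rK(\cH)$ gives $\rho \in \rT_+(\cH)$, and the identity $X_n = P_\cX X_n P_\cX$ passes to the limit, yielding $\supp\rho \subseteq \cX$. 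It remains to show $\tr_2 \rho = \rho_1$ (and $\tr_1 \rho = \rho_2$ by symmetry); this is the main obstacle, since by Example~\ref{example} weak-$*$ convergence in $\rT(\cH)$ need not be preserved under partial trace, so one cannot simply pass $\tr_2$ through the weak-$*$ limit.

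The marginal identity is obtained by a double-truncation argument exploiting the strong information $\tr_j X_n \to \rho_{3-j}$ in trace norm. Fix an ONB $\{e_j\}$ of $\cH_2$, let $Q_m$ project onto $\spa\{e_1,\ldots,e_m\}$, and let $R$ be an arbitrary finite-rank projection on $\cH_1$. Decompose $R \otimes I = R \otimes Q_m + R \otimes (I-Q_m)$. For fixed $m$, $R \otimes Q_m \in \rK(\cH)$, so weak-$*$ convergence gives $\tr(X_n(R \otimes Q_m)) \to \tr(\rho(R \otimes Q_m))$. For the tail, $0 \le R \otimes (I-Q_m) \le I \otimes (I-Q_m)$ together with $X_n \succeq 0$ gives
\[
0 \le \tr(X_n(R \otimes (I-Q_m))) \le \tr\bigl((\tr_1 X_n)(I-Q_m)\bigr) \le \tr(\rho_2(I-Q_m)) + \mu_n,
\]
the last step from $|\tr((\tr_1 X_n - \rho_2)(I-Q_m))| \le \|\tr_1 X_n - \rho_2\|_1 \le \mu_n$. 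An analogous bound $\tr(\rho(R \otimes (I-Q_m))) \le \tr((\tr_1\rho)(I-Q_m))$ uses $\tr_1\rho \in \rT_+(\cH_2)$, so both tails vanish as $m\to\infty$. An $\varepsilon/3$-argument combining these estimates with the compact-operator convergence for fixed $m$ yields $\tr(X_n(R\otimes I)) \to \tr(\rho(R \otimes I))$, i.e.\ $\tr((\tr_2 X_n)R) \to \tr((\tr_2\rho)R)$. On the other hand $\tr((\tr_2 X_n)R) \to \tr(\rho_1 R)$ by trace-norm convergence of $\tr_2 X_n$, so $\tr((\rho_1 - \tr_2\rho)R) = 0$ for every finite-rank projection $R$. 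Linearity extends this to all finite-rank operators, and their norm-density in $\rK(\cH_1)$ combined with $\rho_1 - \tr_2\rho \in \rT(\cH_1) = \rK(\cH_1)^*$ forces $\tr_2\rho = \rho_1$. Finally $\tr\rho = \tr\rho_1 = 1$, so $\rho \in \rS_{+,1}(\cH)$, completing the proof.
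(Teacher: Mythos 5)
Your proof is correct, and the attainment and necessity halves coincide with the paper's (the lower bound $f(X)\ge 2\|X\|_1-2$ on $\rT_+(\cH)$ to compactify the feasible set, and $P_n\rho P_n\to\rho$ in trace norm to show $\mu_n\to 0$). The sufficiency half, however, takes a genuinely different route. The paper extracts a weak operator limit $\rho$ of the minimizers and then invokes its Theorem \ref{wotnrmconv} --- the long, computational result that w.o.t.\ convergence plus trace-norm convergence of the marginals forces $\|\rho^{(n_k)}-\rho\|_1\to 0$ --- after which $\tr_2\rho=\rho_1$, $\tr_1\rho=\rho_2$ follow from contractivity of the partial traces. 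You instead identify the marginals of the weak-$*$ limit directly: the decomposition $R\otimes I=R\otimes Q_m+R\otimes(I-Q_m)$, with the tail controlled by $\tr\bigl((\tr_1X_n)(I-Q_m)\bigr)\le\tr(\rho_2(I-Q_m))+\mu_n$, is exactly a tightness (``no mass escapes to infinity'') argument made possible by the trace-norm convergence of the marginals; all the individual steps (positivity of $(I-R)\otimes(I-Q_m)$, the duality $\tr(X(I\otimes B))=\tr((\tr_1X)B)$, density of finite-rank operators in $\rK(\cH_1)$ paired with $\rT(\cH_1)=\rK(\cH_1)^\vee$) check out. What each approach buys: the paper's route delivers the stronger conclusion $\|X_{n_k}-\rho\|_1\to 0$, which it reuses for the compactness of the fibers $\cM(\rho_1,\rho_2)$ and the Hausdorff-metric results; your route is shorter and more elementary for the theorem at hand, bypassing Theorem \ref{wotnrmconv} entirely. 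In fact, your truncation argument applies verbatim in the setting of Theorem \ref{wotnrmconv} (w.o.t.\ convergence of a bounded sequence in $\rT_+(\cH)$ is weak-$*$ convergence by Lemma \ref{w*=wot}), yielding $\tr\rho=\tr\rho_1=\lim\tr\rho^{(n)}$; combined with Davies' Lemma \ref{weaknormconv1} this would give the ``short simple proof'' of Theorem \ref{wotnrmconv} whose existence the paper explicitly wonders about --- worth writing up.
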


We now comment on the above theorem.  The Lipschitz and convexity properties of $f$ on $\rT(\cH)$ follows straightforward from the triangle inequality for norms and the fact that the partial traces are contractions.  Since $\cX_n$ has dimension $n$ the minimum $\mu_n(\rho_1,\rho_2)$ can be computed efficiently.  Furthermore, the sequence $\mu_n(\rho_1,\rho_2)$ is decreasing.  It is also straightforward to show that that if there exists $\rho\in\rT_1(\cH), \supp \rho\subseteq \cX$ such that $\tr_1\rho=\rho_2$ and $\tr_2\rho=\rho_1$ then \eqref{munrhocond} holds.  The nontrivial part of the above theorem is that the condition \eqref{munrhocond} yields the existence of $\rho$.
This part follows from the following nontrivial interesting result:
\begin{theorem}\label{wotnrmconv}
Assume that $\cH_1$ and $\cH_2$ are infinite dimensional separable Hilbert spaces. Suppose that $\rho_i\in\rT_+(\cH_i)$ for $i=1,2$.
Assume that the sequence  $\rho^{(n)}\in\rT_{+}(\cH), n\in\N$ converges in the weak operator topology to $\rho\in \rT_{+}(\cH)$.  Suppose furthermore that
\begin{eqnarray}\label{partrrhoncond}
\lim_{n\to\infty} \|\tr_1 \rho^{(n)}-\rho_2\|_1+\|\tr_2 \rho^{(n)}-\rho_1\|_1=0.
\end{eqnarray}
Then
\begin{eqnarray}\label{rhonnrmconvrho}
\lim_{n\to\infty} \|\rho^{(n)}-\rho\|_1=0.
\end{eqnarray}
In particular $\tr\rho=\tr\rho_1=\tr\rho_2$.  Hence $\rho$ is a density operator if and only if $\rho_1$ and $\rho_2$ are density operators.
\end{theorem}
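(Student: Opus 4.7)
The plan is: (1) reduce the claim to the trace identity $\tr\rho = \tr\rho_1 = \tr\rho_2$, and (2) invoke the classical result, standard in the theory of trace ideals, that a weakly convergent sequence of positive trace-class operators with convergent traces converges in trace norm. Step (2) is routine once the trace identity is established, so the heart of the argument is step (1).

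The trace-norm hypothesis on the partial traces, together with the contractivity $|\tr A| \leq \|A\|_1$ on $\rT(\cH)$, gives
\begin{equation*}
\tr \rho^{(n)} \;=\; \tr(\tr_j \rho^{(n)}) \;\longrightarrow\; \tr \rho_i, \qquad \{i,j\}=\{1,2\},
\end{equation*}
so $t:=\tr\rho_1=\tr\rho_2$ and $\tr\rho^{(n)}\to t$. The upper bound $\tr\rho\leq t$ is automatic: for any finite rank projection $R$ on $\cH$, WOT convergence and the formula $\tr\rho=\sup_R\tr(R\rho)$ for positive trace-class $\rho$ yield $\tr(R\rho)=\lim_n\tr(R\rho^{(n)})\leq\limsup_n\tr\rho^{(n)}=t$, and taking the supremum over $R$ gives $\tr\rho\leq t$.

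The matching lower bound $\tr\rho\geq t$ is the main obstacle, since Example \ref{example} already warns that partial traces are not WOT-continuous; we cannot simply pass to the limit in $\tr_j \rho^{(n)}$. The idea is to leverage the \emph{trace-norm} convergence of the partial traces as a tightness estimate for the sequence $\rho^{(n)}$. Fix orthonormal bases $\{f_i\}_{i\in\N}$ of $\cH_1$ and $\{e_j\}_{j\in\N}$ of $\cH_2$, and let $P_L$, $Q_K$ be the projections on the spans of the first $L$, respectively $K$, basis vectors. Using the exact decomposition $I - P_L\otimes Q_K = (I-P_L)\otimes I + P_L\otimes (I-Q_K)$, the bound $P_L\leq I$, positivity of $\rho^{(n)}$, and the identity $\tr((A\otimes I)T)=\tr(A\,\tr_2 T)$, we obtain
\begin{equation*}
\tr \rho^{(n)} - \tr((P_L\otimes Q_K)\rho^{(n)}) \;\leq\; \tr((I-P_L)\,\tr_2 \rho^{(n)}) + \tr((I-Q_K)\,\tr_1\rho^{(n)}).
\end{equation*}
Given $\epsilon>0$, first choose $L,K$ so that $\tr((I-P_L)\rho_1), \tr((I-Q_K)\rho_2) < \epsilon/4$ (possible since $\rho_i\in\rT_+$ and $P_L,Q_K\to I$ strongly), and then $N$ so that $\|\tr_2\rho^{(n)}-\rho_1\|_1, \|\tr_1\rho^{(n)}-\rho_2\|_1 < \epsilon/4$ for $n\geq N$. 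The right-hand side is then $<\epsilon$ for all $n\geq N$.

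For these fixed $L,K$, the projection $P_L\otimes Q_K$ has finite rank, so WOT convergence yields $\tr((P_L\otimes Q_K)\rho^{(n)})\to\tr((P_L\otimes Q_K)\rho)\leq\tr\rho$; letting $n\to\infty$ in the displayed inequality gives $t-\tr\rho\leq\epsilon$, and as $\epsilon$ is arbitrary, $\tr\rho\geq t$. Combining the two bounds, $\tr\rho=t$. The classical trace-ideal theorem applied to the positive sequence $\rho^{(n)}$ with $\rho^{(n)}\to\rho$ in WOT and $\tr\rho^{(n)}\to\tr\rho$ now delivers \eqref{rhonnrmconvrho}, and the density-operator statement is immediate from $\tr\rho=\tr\rho_1=\tr\rho_2$.
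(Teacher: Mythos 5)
Your proof is correct, and it takes a genuinely different and substantially shorter route than the paper's. The paper argues by contradiction: it passes to Ces\`aro means via the Banach--Saks theorem to upgrade weak operator convergence to $\|\cdot\|_2$-convergence, writes out spectral decompositions of $\hat\rho_n$ in coordinates $\mu_{p,q}^{i,n}$ adapted to the eigenbases of $\alpha_1=\rho_1-\tr_2\rho$ and $\alpha_2=\rho_2-\tr_1\rho$, and derives two incompatible inequalities on diagonal sums of $\tr_1(\hat\rho_n-R_m\hat\rho_n R_m)$ and $\tr_2(\hat\rho_n-R_m\hat\rho_n R_m)$ --- a computation the authors themselves describe as long, remarking that a short simple proof may exist. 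Your argument supplies exactly that: the identity $I-P_L\otimes Q_K=(I-P_L)\otimes I+P_L\otimes(I-Q_K)$ together with positivity gives the uniform tightness bound $\tr\bigl((I-P_L\otimes Q_K)\rho^{(n)}\bigr)\le\tr\bigl((I-P_L)\tr_2\rho^{(n)}\bigr)+\tr\bigl((I-Q_K)\tr_1\rho^{(n)}\bigr)$, and the trace-norm hypothesis \eqref{partrrhoncond} makes the right side small uniformly in large $n$; since $P_L\otimes Q_K$ is finite rank, weak operator convergence then forces $\tr\rho\ge\lim_n\tr\rho^{(n)}$, after which both proofs finish identically by Davies' criterion (Lemma \ref{weaknormconv1}). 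Your route isolates the real content --- the partial-trace hypothesis prevents mass from escaping to infinity, which is precisely what fails in Example \ref{example} --- and avoids Banach--Saks, the Hilbert--Schmidt detour, and all spectral bookkeeping; the only ingredients are the duality $\tr((A\otimes I)T)=\tr(A\,\tr_2T)$, the bound $|\tr(AB)|\le\|A\|\,\|B\|_1$, and positivity of $(I-P_L)\otimes(I-Q_K)$, all of which are available in the paper's Section \ref{sec:prilHs}. I see no gap; this is a genuine simplification of the paper's proof.
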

Our proof is long and computational.  Perhaps there exists a short simple proof of this theorem.

The above theorem implies the following results.
First, $\cM(\rho_1,\rho_2)$ is a compact metric space with the distance  induced by the norm $\|\cdot\|_1$ on $\rT_+(\cH_1\otimes\cH_2)$.
Second, Theorem \ref{hmetricfib}  shows that this Hausdorff distance $\textrm{hd}(\cM(\rho_1,\rho_2),\cM(\sigma_1,\sigma_2))$ is a complete metric on the fibers $\cM(\rho_1,\rho_2)$.

In this paper we use many standard and known results for compact operators, trace class operators and Hilbert-Schmidt operators, ($\rT_2(\cH)$), on a separable Hilbert space.  For convenience of the reader, we tried to make this paper self contained as much as possible.  We stated some of the known and less known results that we used in two Appendices.

We now survey briefly the content of this paper.   In Section \ref{sec:prilHs} we discuss
some basic results on operators on separable Hilbert spaces.  We recall the Schmidt decomposition of compact operators and its properties.  We discuss in detail the trace class operators $\rT(\cH)$, the Hilbert-Schmidt operators $\rT_2(\cH)$ and relations between these Banach spaces.  Next we consider these classes of operators for bipartite Hilbert space $\cH=\cH_1\otimes \cH_2$.  We discuss in detail the partial trace operators and their properties under the weak operator convergence.

In Section \ref{sec:proofmt} we give proofs to Theorems \ref{wotnrmconv} and \ref{XinfH2fin}.   Most of this Section is devoted to the proof of Theorem \ref{wotnrmconv}, which is long and computational.  The proof of Theorem \ref{XinfH2fin} follows quite simply from Theorem \ref{wotnrmconv}.

Section {\ref{sec:dfcX} discusses a simpler case of quantum marginals problem, where the support of $\rho$ is contained in a finite dimensional subspace $\cX$ of the bipartite space $\cH$.  In this case we can replace the minimum problem \eqref{defminSX1n}, which boils down to the minimum of Lipschitz convex function on a finite dimensional compact convex set,  to a maximum problem in semidefinite programming(SDP), on a bounded compact set of positive semidefinite matrices, which has an interior.  In Subsection \ref{sub:fdgenSDP} we discuss a more general
SDP problem than the one considered in \cite{Ming}, and its dual problem. Most of Subsection \ref{subsec:infdcase} is devoted to the case where $\cH_1$ and $\cH_2$ are separable infinite dimensional.  The main result of this subsection is Theorem \ref{H1H2inf}, which is an analog of Theorem \ref{XinfH2fin}, where $\mu_n(\rho_1,\rho_2)$ is replaced by $\mu_n(\rho_1,\rho_2,\cX)$, which is  the maximum of an appropriate SDP problem.

In Section \ref{subsec: Hausdorff}, we prove that the  space of fibers $\cM(\rho_1,\rho_2)$ over $\Sigma$ is a compact metric space with respect to {the} Hausdorff metric.

Appendix \ref{appedixA} is devoted to various inequalities on singular values of compact operators that we use in this paper.  All the results in this Appendix are well known.  Appendix \ref{app:conv} is devoted mostly to the connection of weak operator convergence to weak star convergence on trace class and to the weak convergence on Hilbert-Schmidt operators.  All the results in this Appendix, except perhaps part (2) of Lemma \ref{convT2}, are well known to the experts.
\section{Preliminary results on operators in Hilbert spaces}\label{sec:prilHs}
We now recall some results needed in this paper on operators in a separable Hilbert space $\cH$.  
Our main references are  \cite{PBEB15,Con91,CAM67,NC00,RS98,Sem}.
 For completeness, we outline a short proof of some known results which do not appear in \cite{RS98} or \cite{Con91}.
We will follow closely the notions in \cite{Fri18}.  The elements of $\cH$ are denoted by lower bold letters as $\bx$.
We denote the inner product in $\cH$ by $\langle \bx,\by\rangle$, which is linear in $\bx$ and antilinear in $\by$.  The norm $\|\bx\|$ is equal to $\sqrt{\langle \bx,\bx\rangle}$.
We denote by $\cH^\vee$ the dual space of the linear functional on $\cH$.  Recall that
a linear functional $\bbf\in\cH^\vee$ represented by $\by\in \cH$: $\bbf(\bx)=\langle \bx,\by\rangle$ for all $\bx\in \cH$.  We denote this $\bbf$ by $\by^{\vee}$.  Note
\[(a_1\by_1+a_2\by_2)^\vee=\bar a_1\by_1^\vee +\bar a_2\by_2^\vee.\]
Denote by $\N$ the set of positive integers.  For $n\in\N$ we denote $[n]=\{1,\ldots,n\}$, and let $[\infty]=\N$.  Recall that $\cH$ is separable if it has an orthormal basis $\be_i$ for $i\in[N]$, where $N\in\N\cup\{\infty\}$.  In this paper we assume that $\cH$ is separable.  Then $\dim\cH=N$.  Thus $\cH$ is finite dimensional if $N\in\N$.

We denote by $\rB(\cH)$ the space of bounded linear operators $L:\cH\to \cH$.  The bounded linear operators are denoted by the capital letters. The operator norm of $L$ is given by $\|L\|=\sup\{\|L\bx\|, |\bx|\le 1\}$.  The adjoint operator of $L$ is denoted by $L^\vee$ and is given by the equality $\langle L\bx,\by\rangle=\langle \bx,L^\vee\by\rangle$. $L$ is called a selfadjoint operator if $L^\vee=L$.  Denote  by $\rS(\cH)\subset \rB(\cH)$ the real space of selfadjoint operators.  $L\in \rS(\cH)$ is called nonnegative (positive) if $\langle L\bx,\bx\rangle\ge 0$ ($\langle L\bx,\bx\rangle > 0$) for all $\bx\ne \0$.  Denote  by $\rS_{++}(\cH)\subset \rS_+(\cH)$ the open set of positive
and nonnegative (selfadjoint bounded) operators.  So $\rS_+(\cH)$ is a closed cone and $\rS_{++}(\cH)$ its interior.  Recall that $L\in \rS_+(\cH)$ has a unique root $L^{1/2}\in \rS_+(\cH)$.  If $L$ is positive then $L^{1/2}$ is positive.
For $L\in\rB(\cH)$ we have that $L^\vee L, LL^\vee\in \rS_+(\cH)$, and $|L|=(L^\vee L)^{1/2}\in\rS_+(\cH)$.
 For $A,B\in \rS(\cH)$ we denote $A\succeq B (A\succ B)$ if $A-B\in \rS_+(\cH) (A-B\in \rS_{++}(\cH))$.

$L$ is called rank one operator if $L=\bx\by^\vee$, where $\bx,\by\ne \0$.  Thus $L(\bz)=\langle \bz,\by\rangle \bx$.  $L$ is selfadjoint if and only if $\by=a\bx$ for some $a\in\R$.
$L\in \rS_+(\cH)$ if and only if $a\ge 0$.

Assume that $\dim\cH=\infty$.
Denote  by $\rK(\cH)$ the closed ideal (left and right) of compact operators.  The operator $L$ is in $\rK(\cH)$ if and only if $L$ has singular value decompostion (SVD), (or Schmidt decomposition):
\begin{eqnarray}\label{SVD}
&&L=\sum_{i=1}^{\infty} \sigma_i(L)\bg_i\bbf_i^\vee, \\ &&\|L\|=\sigma_1(L)\ge \cdots\ge\sigma_n(L)\ge\cdots\ge 0, \lim_{i\to\infty} \sigma_i(L)=0.\notag
\end{eqnarray}
Here $\{\bg_1,\ldots, \bg_n, \ldots\}, \{\bbf_1,\ldots, \bbf_n, \ldots\}$ are two orthonormal sets of vectors of $\cH$.  The $n$th singular value of $L$ denoted by  $\sigma_n(L)$, and $\bg_n,\bbf_n$ are called  left and right $n$th singular vectors of $L$.
$L$ is selfadjoint if and only if $\bbf_i=\varepsilon_i \bg_i, \varepsilon_i\in\{-1,1\}$ for all $i\in\N$.  Then \eqref{SVD} is the spectral decomposition of $L$ where $\varepsilon_i\sigma_i(L)$ is the eigenvalue of $L$ with the corresponding eigenvector $\bg_i$.  Furthermore $L\in \rS_+(\cH)\cap \rK(\cH)$ if and only if $\bbf_i=\bg_i$ for all $i\in\N$.
Hence all positive $\sigma_i(L)^2$ are the positive eigenvalues of  compact operators  $LL^\vee,L^\vee L \in \rS_+(\H)\cap \rK(\cH)$.
Note that
\begin{eqnarray*}
\|L-\sum_{i=1}^{n} \sigma_i(L)\bg_i\bbf_i^\vee\|=\sigma_{n+1}(L), \quad n\in\N.
\end{eqnarray*}
Recall that if $A\in \rB(\cH)$ and $L\in \rK(\cH)$ then $AL, LA\in \rK(\cH)$.  Furthermore, one has the inequalities
\begin{eqnarray}\label{SVDinprod}
\sigma_i(AL), \sigma_i(LA)\le \sigma_i(L)\|A\|, \; i\in\N.
\end{eqnarray}
(See Appendix A or \cite[1.11 Theorem]{Con91}.)
The above inequalities on singular values yield that if $L\in \rT(\cH)$ then $AL, LA\in \rT(\cH)$.  、Furthermore, $\|AL\|_1,\|LA\|_1\le \|L\|_1\|A\|$.

If $L\in \rT(\cH)$, then for each orthonormal basis $\be_i,i\in\N$, we have the inequality $\sum_{i=1}^\infty |\langle L\be_i,\be_i\rangle|\le \|L\|_1$.  (See Lemma \ref{tracein}.)  Furthermore the value of the sum $\sum_{i=1}^\infty \langle L\be_i,\be_i\rangle$ is independent of a choice of the basis, is denoted as the trace of $L$ \cite[1.9 Proposition]{Con91}.
Thus the  SVD decomposition \eqref{SVD} of $L\in \rT(\cH)$ yields that
\begin{eqnarray}\label{traceform}
\tr L=\sum_{i=1}^\infty \sigma_i(L)\langle \bg_i,\bbf_i\rangle.
\end{eqnarray}
Thus $|\tr L|\le \|L\|_1$ and equality holds if and only of  $z L\in \rT_{+}(\cH)$ for some $z\in\C, |z|=1$.  Note that if
$L\in \rS(\cH)\cap \rT(\cH)$ then the trace of $L$ is the sum of the eigenvalues of $L$.   (See Appendix A.)

Next we recall the following known result that we need later:
\begin{eqnarray*}
\tr LA=\tr AL=\tr A^{1/2} L A^{1/2}\ge 0 \textrm{ if }L\in \rT_+(\cH)  \textrm{ and } A\in \rS_+(\cH).
\end{eqnarray*}

Let $x_1,\ldots,x_n\ge 0$.  Then the function $f(p)=(\sum_{i=1}^n x_i^p)^{1/p}$ is a nonincreasing function for $p\in(0,\infty)$.  Hence $\rT_p(\cH)\subset \rT_q(\cH)$ for $1\le p <q<\infty$.  (Usually $\rT_{\infty}(\cH)$ is identified with $\rB(\cH)$.)
In particular,  $\rT(\cH)\subset \rT_2(\cH)$.  The space $\rT_2(\cH)$ is the Hilbert-Schmidt space of compact operators.  Fix an orthonormal basis $\{\be_i\}, i\in\N$.  Then $A_1,A_2\in \rT_2(\cH)$ have representations
\begin{eqnarray*}
A_l=\sum_{i=j=1}^{\infty} a_{ij,l} \be_{i}\be_j^\vee, \quad \|A_l\|_2= (\sum_{i=j=1}^{\infty}|a_{ij,l}|^2)^{1/2},\;l\in[2].
\end{eqnarray*}
Thus $\rT_2(\cH)$ is a Hilbert space with the inner product
\begin{eqnarray*}
\langle A_1,A_2\rangle=\sum_{i=j=1}^\infty a_{ij,1}\bar a_{ij,2}.
\end{eqnarray*}
It is well known that if $A_1,A_2\in \rT_2(\cH)$ then $A_1A_2\in\rT(\cH)$:
\begin{eqnarray*}
A_1 A_2=\sum_{i=j=1}^\infty (\sum_{k=1}^{\infty} a_{ik,1}a_{kj,2})\be_i\be_j^\vee.
\end{eqnarray*}
Furthermore
\begin{eqnarray*}
&&\langle A_1,A_2\rangle=\tr  A_1 A_2^\vee,\quad  \|A_1A_2\|_1\le \|A_1\|_2\| \|A_2\|_2,\\
&&A_1 A_1^\vee\in\rT_+(\cH),\quad \|A_1 A_1^\vee\|_1=\|A_1\|_2^2=\tr A_1A_1^\vee.
\end{eqnarray*}
See Lemma \ref{prodHSop} or \cite[1.8 Proposition]{Con91}.

We next discuss the tensor product $\cH_1\otimes\cH_2$ of two separable Hilbert spaces.  It is called in quantum physics bipartite states.
Assume that the inner product in $\cH_i$ is $\langle \cdot,\cdot \rangle_i$.
Then $\cH_1\otimes\cH_2$ has the induced inner product satisfying the property $\langle \bx\otimes \by, \bu\otimes \bv\rangle=\langle \bx, \bu\rangle_1 \langle \by,\bv\rangle_2$.  We assume that $\cH_l$ has an orthonormal basis $\be_{i,l}, i\in[N_l]$, where $N_l\in \N\cup\{\infty\}$ for $l\in[2]$.  These two orthonormal bases induce the orthonormal basis $\be_{i,1}\otimes\be_{j,2}$ for $i\in[N_1],j\in [N_2]$ in  $\cH_1\otimes\cH_2$ .  A vector $\ba\in  \cH_1\otimes\cH_2$  has the expansion
\begin{eqnarray}\label{baexpan}
\ba=\sum_{i=j=1}^{N_1,N_2} a_{ij}\be_{i,1}\otimes\be_{j,2},\; \|\ba\|=\sqrt{\sum_{i=j=1}^{N_1,N_2}|a_{ij}|^2}<\infty.
\end{eqnarray}
Note that $\ba$ induces two bounded linear operators $A(\ba):\cH_2\to\cH_1$ and $A(\ba)^\vee: \cH_1\to\cH_2$ given by
\begin{eqnarray}\label{defAa}
A(\ba)=\sum_{i=j=1}^{N_1,N_2} a_{ij}\be_{i,1}\be_{j,2}^{\vee}, \quad A(\ba)^\vee=\sum_{i=j=1}^{N_1,N_2} \bar a_{ij}\be_{j,2}\be_{i,1}^{\vee}.
\end{eqnarray}

We can view $A(\ba)$  as a matrix $\hat A=[a_{ij}]_{i=j=1}^{N_1,N_2}$.  We denote by $\hat A^{\dagger}=[a_{pq}^{\dagger}]_{p=q=1}^{N_2,N_1}$, where $a^{\dagger}_{pq}=\bar a_{qp}$ for all $p\in[N_2], q\in[N_1]$.  ($\hat A^{\dagger}$ is the ``transpose conjugate'' of $\hat A$.)  Next we observe that the operators $A(\ba)$ and $A(\ba)^\vee$  can be viewed as adjoint Hilbert-Schmidt operators on $\tilde\cH=\cH_1\oplus \cH_2$, with the inner product:
\begin{eqnarray*}
\langle (\bx,\bu),(\by,\bv)\rangle=\langle \bx,\by\rangle_1+\langle \bu,\bv\rangle_2.
\end{eqnarray*}
Let $\tilde \be_{i,1}=(\be_{i,1},0), \tilde \be_{j,2}=(0,\be_{j,2})$ for $i\in[N_1],j\in[N_2]$.
Define
\begin{eqnarray*}\label{deftAa}
\tilde A(\ba)=\sum_{i=j=1}^{N_1,N_2} a_{ij}\tilde \be_{i,1}\tilde \be_{j,2}^{\vee}, \quad
\tilde A(\ba)^\vee=\sum_{i=j=1}^{N_1,N_2} \bar a_{ij}\tilde \be_{j,2}\tilde \be_{i,1}^{\vee}
\end{eqnarray*}
Then $\tilde A(\ba), \tilde A(\ba)^\vee\in\rT_2(\tilde\cH)$.  Furthermore we have the following relations
\begin{eqnarray*}
\tilde A(\ba)\tilde A(\ba)^\vee\big|\cH_1=A(\ba) A(\ba)^\vee, \quad \tilde A(\ba)\tilde A(\ba)^\vee\big|\cH_2=0,\\
\tilde A(\ba)^\vee\tilde A(\ba)\big|\cH_2=A(\ba)^\vee A(\ba), \quad \tilde A(\ba)^\vee\tilde A(\ba)\big|\cH_1=0.
\end{eqnarray*}
Lemma \ref{prodHSop} yields that $A(\ba)A(\ba)^\vee\in \rT_+(\cH_1), A(\ba)^\vee A(\ba)\in \rT_+(\cH_2)$, and the two operators have the same singular values.  Thus the matrices $\hat A \hat A^\dagger,  \hat A^\dagger \hat A$ represent the operators $A(\ba)A(\ba)^\vee, A(\ba)^\vee A(\ba)$ in the bases $\{\be_{i,1}\}, \{\be_{j,2}\}$ respectively.

Let $\bb=\sum_{i=j=1}^{N_1,N_2} b_{ij}\be_{i,1}\otimes\be_{j,2}\in \cH_1\otimes\cH_2$.  Denote $\hat B=[b_{ij}]_{i=j=1}^{N_1.N_2}$.  Then $\langle \ba,\bb\rangle=\tr \hat A \hat B^{\dagger}=\tr\hat B^{\dagger} \hat A$.

Assume that $F\in \rT(\cH_1\otimes\cH_2)$.  We now discuss the notions of partial traces
$\tr_1(F)\in \rT(\cH_2)$ and  $\tr_2(F)\in \rT(\cH_1)$.  Assume first that $F$ is a rank one product operator: $(\bx\otimes \by) (\bu\otimes\bv)^\vee$.  Then
\begin{eqnarray}\label{tr12rankone1}
&&\tr_1((\bx\otimes \by) (\bu\otimes\bv)^\vee)=\langle \bx,\bu\rangle \by\bv^\vee,\\
&&\tr_2((\bx\otimes \by) (\bu\otimes\bv)^\vee)=\langle \by,\bv\rangle \bx\bu^\vee.
\label{tr12rankone2}
\end{eqnarray}
Hence
\begin{eqnarray*}
&&\|(\bx\otimes \by) (\bu\otimes\bv)^\vee\|_1=\|\bx\|\|\by\|\|\bu\|\|\bv\|, \\
&&\|\tr_1(\bx\otimes \by) (\bu\otimes\bv)^\vee)\|_1=|\langle \bx,\bu\rangle|\|\by\|\|\bv\|,\\
&&\|\tr_2((\bx\otimes \by) (\bu\otimes\bv)^\vee)\|_1=|\langle \by,\bv\rangle| \|\bx\|\|\bu\|.
\end{eqnarray*}
\begin{lemma}\label{tr12ab}  Assume that $\cH_i$ is a separable Hilbert space of dimension $N_i$ with a basis $\be_{j,i}, j\in[N_i]$ for $i\in[2]$.  Denote $\cH=\cH_1\otimes \cH_2$.  Let $\ba,\bb\in\cH$.  Suppose that $\ba$ has the representation \eqref{baexpan}.  Assume that $\bb$ has a similar expansion and $\hat A=[a_{ij}], \hat B=[b_{ij}],i\in [N_1], j\in[N_2] $ are the representation matrices of $\ba,\bb$ respectively.
Denote by $C$ and $D$ the following operators:
\begin{eqnarray}\quad\quad
\tr_2\ba\bb^\vee=C=\sum_{i=p=1}^{N_1}c_{ip}\be_{i,1}\be_{p,1}^\vee,  \tr_1\ba\bb^\vee= D=\sum_{j=q=1}^{N_2}d_{jq}\be_{j,2}\be_{q,2}^\vee.
\end{eqnarray}
Then
\begin{eqnarray}\label{defhatCD}
\hat C=\hat A\hat B^{\dagger}=[c_{ip}]_{i=p=1}^{N_1},\quad \hat D=\hat{A}^{\top} \overline{\hat B}=[d_{jq}]_{j=q=1}^{N_2}.
\end{eqnarray}
 Furthermore $C\in \rT(\cH_1), D\in \rT(\cH_2)$ and the following inequalities and equalities hold
\begin{eqnarray}\label{upbdnmtr12}
&&\max(\|\tr_2 \ba\bb^\vee)\|_1,\|\tr_1 \ba\bb^\vee\|_1)\le \|\ba\| \|\bb\|=\|\ba\bb^\vee\|_1,\\
\label{inprodtr2}
&&\langle (\tr_2 \ba\bb^\vee)\bx,\by\rangle=\sum_{j=1}^{N_2} \langle \bx\otimes\be_{j,2},\bb\rangle\langle \ba,\by\otimes\be_{j,2}\rangle,\; \bx,\by\in \cH_1,\\
&&\langle (\tr_1 \ba\bb^\vee)\bu,\bv\rangle=\sum_{i=1}^{N_1} \langle \be_{i,1}\otimes \bu,\bb\rangle\langle \ba,\be_{i,1}\otimes\bv\rangle,\; \bu,\bv\in \cH_2.
\label{inprodtr1}
\end{eqnarray}
In particular
\begin{eqnarray}\label{eqalltraces}
\tr \ba\bb^\vee=\tr\tr_2  \ba\bb^\vee= \tr\tr_1\ba\bb^\vee=\langle \ba,\bb\rangle.
\end{eqnarray}
\end{lemma}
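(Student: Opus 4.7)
The plan is to leverage the fact that $\ba\bb^\vee$ is a rank one operator in $\rT(\cH)$ with $\|\ba\bb^\vee\|_1=\|\ba\|\|\bb\|$, together with the already-established contraction property $\|\tr_i F\|_1\le \|F\|_1$ of partial traces, and then identify the matrix entries of $C$ and $D$ by a truncation argument.

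First, for $n\in\N$ define the truncations $\ba^{(n)}=\sum_{i,j\le n} a_{ij}\be_{i,1}\otimes\be_{j,2}$ and $\bb^{(n)}=\sum_{p,q\le n} b_{pq}\be_{p,1}\otimes\be_{q,2}$. Parseval gives $\|\ba^{(n)}-\ba\|\to 0$ and similarly for $\bb^{(n)}$, and the elementary identity $\|\bu\bv^\vee - \bu'\bv'^\vee\|_1\le \|\bu-\bu'\|\|\bv\|+\|\bu'\|\|\bv-\bv'\|$ shows that $\ba^{(n)}(\bb^{(n)})^\vee\to \ba\bb^\vee$ in $\rT(\cH)$. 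For each fixed $n$ the product is a finite linear combination of rank one product operators $(\be_{i,1}\otimes\be_{j,2})(\be_{p,1}\otimes\be_{q,2})^\vee$, so \eqref{tr12rankone1} and \eqref{tr12rankone2} apply term by term and yield
\begin{eqnarray*}
\tr_2\bigl(\ba^{(n)}(\bb^{(n)})^\vee\bigr)=\sum_{i,p\le n}\Bigl(\sum_{j\le n} a_{ij}\bar b_{pj}\Bigr)\be_{i,1}\be_{p,1}^\vee,\\
\tr_1\bigl(\ba^{(n)}(\bb^{(n)})^\vee\bigr)=\sum_{j,q\le n}\Bigl(\sum_{i\le n} a_{ij}\bar b_{iq}\Bigr)\be_{j,2}\be_{q,2}^\vee.
\end{eqnarray*}
Passing to the limit in the trace norm (using continuity of $\tr_i$), the left sides converge to $C$ and $D$ respectively, and the double scalar sums are precisely the entries of $\hat A\hat B^\dagger$ and $\hat A^\top\overline{\hat B}$. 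This proves \eqref{defhatCD}; the inequalities in \eqref{upbdnmtr12} are then immediate from $\|\ba\bb^\vee\|_1=\|\ba\|\|\bb\|$ together with the contraction property.

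For the inner product formulas \eqref{inprodtr2} and \eqref{inprodtr1}, I would first verify them on a rank one product operator $(\bu\otimes\bv)(\bu'\otimes\bv')^\vee$ by direct computation: the left side equals $\langle\bv,\bv'\rangle\langle\bx,\bu'\rangle\langle\bu,\by\rangle$, and expanding $\langle\bx\otimes\be_{j,2},\bu'\otimes\bv'\rangle\langle\bu\otimes\bv,\by\otimes\be_{j,2}\rangle$ and summing in $j$ gives the same quantity by Parseval applied to the $\cH_2$ factor. The general case then follows by bilinearity on the truncations and passage to the limit: continuity of $\langle C\bx,\by\rangle$ in $C\in\rT(\cH_1)$ (since $|\langle(C-C^{(n)})\bx,\by\rangle|\le \|C-C^{(n)}\|_1\|\bx\|\|\by\|$) gives convergence of the left side, while the right side converges because $\ba^{(n)}\to\ba$ and $\bb^{(n)}\to\bb$ in $\cH$ and the sum over $j$ is absolutely controlled by $\|\bx\|\|\by\|\|\ba\|\|\bb\|$ via Cauchy--Schwarz. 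Finally, \eqref{eqalltraces} combines three observations already available: $\tr\ba\bb^\vee=\langle\ba,\bb\rangle$ (definition of the rank one operator), the standard identity $\tr\tr_i F=\tr F$ for $F\in\rT(\cH)$ obtained by summing the diagonal of $F$ in the product basis $\be_{i,1}\otimes\be_{j,2}$, and the formulas just proved for $C$ and $D$.

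The main technical obstacle is the justification of termwise application of $\tr_1, \tr_2$ to the formal quadruple series for $\ba\bb^\vee$ when $N_1$ or $N_2$ is infinite; the truncation approach above circumvents this by reducing everything to the finite rank case where \eqref{tr12rankone1}, \eqref{tr12rankone2} apply directly, and then using trace norm continuity of $\tr_i$. The rest is essentially bookkeeping.
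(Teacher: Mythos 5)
Your algebraic bookkeeping is correct: the truncations $\ba^{(n)},\bb^{(n)}$, the termwise application of \eqref{tr12rankone1}--\eqref{tr12rankone2} to the resulting finite sums, the identification of the entries of $\hat A\hat B^{\dagger}$ and $\hat A^{\top}\overline{\hat B}$, the rank-one verification of \eqref{inprodtr2}--\eqref{inprodtr1} via Parseval in the second (resp.\ first) factor, and the derivation of \eqref{eqalltraces} are all fine. The problem is where the trace-norm control comes from. You invoke ``the already-established contraction property $\|\tr_i F\|_1\le\|F\|_1$'' both to pass to the limit and to deduce \eqref{upbdnmtr12}. In this paper that property is Lemma \ref{tracecont}, which is stated and proved \emph{after} the present lemma, and whose proof uses precisely the inequality \eqref{upbdnmtr12} you are trying to establish (it expands a general $F$ in its Schmidt series $\sum_k\sigma_k(F)\ba_k\bb_k^\vee$ and bounds $\|\tr_2 \ba_k\bb_k^\vee\|_1\le\|\ba_k\|\|\bb_k\|$ term by term). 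Note moreover that \eqref{upbdnmtr12} \emph{is} the contraction property restricted to rank-one operators, and the differences $(\ba^{(n)}-\ba)(\bb^{(n)})^\vee$ and $\ba(\bb^{(n)}-\bb)^\vee$ appearing in your limit argument are general rank-one operators of exactly this type, not rank-one \emph{product} operators, so the pre-lemma formulas give you no bound on their partial traces. As written, the argument is circular.

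The missing ingredient is the one the paper actually uses: identify $\tr_2\ba\bb^\vee$ with the product of the Hilbert--Schmidt operators associated to $\ba$ and $\bb$ (equivalently, with the matrix $\hat A\hat B^{\dagger}$) and invoke $\|LM\|_1\le\|L\|_2\|M\|_2$ from Lemma \ref{prodHSop}, which is proved independently in Appendix A; since $\|\hat A\|_2=\|\ba\|$ and $\|\hat B\|_2=\|\bb\|$, this yields $C\in\rT(\cH_1)$, $D\in\rT(\cH_2)$ and \eqref{upbdnmtr12} in one stroke. Your truncation scheme survives with this substitution: the same Hilbert--Schmidt bound applied to $\hat A^{(n)}(\hat B^{(n)})^{\dagger}-\hat A^{(m)}(\hat B^{(m)})^{\dagger}$ shows the truncated partial traces form a Cauchy sequence in $\|\cdot\|_1$, whose limit can then legitimately serve as the definition of $C$ and be matched against the sesquilinear form \eqref{inprodtr2}. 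With that repair the rest of your write-up goes through.
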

\begin{proof}  Clearly $\ba\bb^\vee\in \rT(\cH)$.  Furthermore
\begin{eqnarray*}
\|\ba\bb^\vee\|=\|\ba\|\|\bb\|=(\sum_{i=j=1}^{N_1,N_2} |a_{ij}|^2)^{\frac{1}{2}} (\sum_{p=q=1}^{N_1,N_2} |b_{pq}|^2)^{\frac{1}{2}}.
\end{eqnarray*}
Observe next that
\begin{eqnarray*}
&&\ba\bb^\vee=(\sum_{i=j=1}^{N_1,N_2} a_{ij}\be_{i,1}\otimes\be_{j,2})(\sum_{p=q=1}^{N_1,N_2} b_{pq}\be_{p,1}\otimes\be_{q,2})^\vee=\\
&&\sum_{i=j=p=q=1}^{N_1,N_2,N_1,N_2} a_{ij}\bar b_{pq} (\be_{i,1}\otimes\be_{j,2})(\be_{p,1}\otimes \be_{q,2})^\vee.
\end{eqnarray*}
Use \eqref{tr12rankone1} and \eqref{tr12rankone2} to deduce that the operators
 $C=\tr_2(\ba\bb^{\vee})$ and $D=\tr_1(\ba\bb^{\vee})$, which represented by matrices $\hat C$ and $\hat D$ respectively, satisfy \eqref{defhatCD} and \eqref{inprodtr2}-\eqref{inprodtr1}.

Let $\tilde\cH$ and $\tilde A(\ba),\tilde A(\bb)\in\rT_2(\tilde \cH)$ be defined as above.
Then $\hat C$ and $\overline{\hat D}$ represent $\tilde A(\ba) \tilde A(\bb)^\vee\big|\cH_1\in \rT(\cH_1)$ and $A(\ba)^\vee A(\bb)\big|\cH_2\in \rT(\cH_2)$.
This shows that $C$ and $D$ are in the trace class.  Lemma \ref{prodHSop} yields that
\begin{eqnarray*}
&&\|\hat C\|_1=\|\tilde A(\ba) \tilde A(\bb)^\vee\|_1\le \|\tilde A(\ba)\|_2 \|\tilde A(\bb)^\vee\|_2=\|\hat A\|_2\|\hat B\|_2=\|\ba\|\|\bb\|,\\
&&\|\hat D\|_1=\|\tilde A(\ba)^\vee \tilde A(\bb)\|_1\le \|\tilde A(\ba)^\vee\|_2 \|\tilde A(\bb)\|_2=\|\hat A\|_2\|\hat B\|_2=\|\ba\|\|\bb\|.
\end{eqnarray*}
This proves \eqref{upbdnmtr12}.

It is left to show \eqref{eqalltraces}.  As $\sigma_1(\ba\bb^\vee)=\|\ba\|\|\bb\|$ and all other singular values of $\ba\bb^\vee$ are zero \eqref{traceform} yields that $\tr \ba\bb^\vee=\langle \ba,\bb\rangle$.  Observe next
\begin{eqnarray*}
\tr(\tr_2\ba\bb^\vee)=\sum_{i=1}^{N_1}\langle (\tr_2 \ba\bb^\vee)\be_{i,1},\be_{i,1}\rangle=\\
\sum_{i=1}^{N_1}\sum_{j=1}^{N_2} \langle \be_{i,1}\otimes\be_{j,2},\bb\rangle\langle \ba,\be_{i,1}\otimes\be_{j,2}\rangle=\langle \ba,\bb\rangle.
\end{eqnarray*}
The equaity $\tr (\tr_1\ba\bb^\vee)=\langle \ba,\bb\rangle$ follows similarly.
\end{proof}
The following lemma is known, see Theorem 26.7 and its proof in \cite{PBEB15}, and we bring its proof for completeness.
\begin{lemma}\label{tracecont}  Assume that  $F\in \rT(\cH_1\otimes \cH_2)$.  Then
\begin{enumerate}
\item
$\tr_1(F)\in \rT(\cH_2),\tr_2(F)\in \rT(\cH_1)$.
\item   $\|\tr_1(F)\|_1,\|\tr_2(F)\|_1\le \|F\|_1$.
\item $\tr (\tr_1 F)=\tr (\tr_2 F)=\tr F$.
\item Assume that  $F\in \rT_{+}(\cH)$.   Then $\tr_1(F)\in\rT_+(\cH_2), \tr_2(F)\in\rT_+(\cH_1)$ and
\[\|F\|_1=\tr (F)=\tr(\tr_1(F))=\|\tr_1(F)\|_1=\tr(\tr_2(F))=\|\tr_2(F)\|_1.\]
\end{enumerate}
\end{lemma}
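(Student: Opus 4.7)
\textbf{Proof proposal for Lemma \ref{tracecont}.} The strategy is to reduce every claim to the rank-one case already handled by Lemma \ref{tr12ab}, using the Schmidt decomposition of $F\in\rT(\cH)$ together with completeness of the Banach space $(\rT(\cH_i),\|\cdot\|_1)$. Write $F=\sum_{k=1}^\infty \sigma_k(F)\bg_k\bbf_k^\vee$, with $\{\bg_k\},\{\bbf_k\}$ orthonormal in $\cH=\cH_1\otimes\cH_2$ and $\sum_k\sigma_k(F)=\|F\|_1<\infty$. Lemma \ref{tr12ab} applied to each rank-one term supplies $\tr_2(\bg_k\bbf_k^\vee)\in\rT(\cH_1)$ with $\|\tr_2(\bg_k\bbf_k^\vee)\|_1\le\|\bg_k\|\|\bbf_k\|=1$, and symmetrically for $\tr_1$. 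Absolute summability of the $\sigma_k(F)$ then produces a norm-convergent series
\[G_2:=\sum_{k=1}^\infty\sigma_k(F)\tr_2(\bg_k\bbf_k^\vee)\in\rT(\cH_1),\qquad \|G_2\|_1\le\|F\|_1,\]
and analogously a $G_1\in\rT(\cH_2)$ with $\|G_1\|_1\le\|F\|_1$. This gives (1) and (2) once $G_i$ is identified with the intrinsic partial trace.

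To confirm that $G_2$ depends only on $F$ and not on the Schmidt expansion chosen, I would compute a generic matrix element via \eqref{inprodtr2} and rearrange by Fubini. For $\bx,\by\in\cH_1$,
\[\langle G_2\bx,\by\rangle=\sum_k\sigma_k(F)\sum_{j=1}^{N_2}\langle\bx\otimes\be_{j,2},\bbf_k\rangle\langle\bg_k,\by\otimes\be_{j,2}\rangle.\]
Because $\{\be_{j,2}\}$ is orthonormal in $\cH_2$, Bessel gives $\sum_j|\langle\bx\otimes\be_{j,2},\bbf_k\rangle|^2\le\|\bx\|^2$ and similarly for $\by$, so Cauchy--Schwarz in $j$ followed by summation in $k$ yields $\sum_{j,k}\sigma_k(F)|\langle\bx\otimes\be_{j,2},\bbf_k\rangle\langle\bg_k,\by\otimes\be_{j,2}\rangle|\le\|\bx\|\|\by\|\|F\|_1$. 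Fubini now legitimately interchanges the sums and produces
\[\langle G_2\bx,\by\rangle=\sum_{j=1}^{N_2}\langle F(\bx\otimes\be_{j,2}),\by\otimes\be_{j,2}\rangle,\]
an expression intrinsic to $F$; this defines $\tr_2 F:=G_2$, and an identical treatment handles $\tr_1F:=G_1$.

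For (3), combine the rank-one identity $\tr(\tr_2(\bg_k\bbf_k^\vee))=\langle\bg_k,\bbf_k\rangle$ from \eqref{eqalltraces} with continuity of the trace functional $|\tr(\cdot)|\le\|\cdot\|_1$ on $\rT(\cH_1)$ to obtain
\[\tr(\tr_2F)=\sum_k\sigma_k(F)\langle\bg_k,\bbf_k\rangle=\tr F\]
via \eqref{traceform}, and symmetrically $\tr(\tr_1F)=\tr F$. For (4), when $F\in\rT_+(\cH)$ the Schmidt decomposition coincides with the spectral decomposition $F=\sum_k\lambda_k\bg_k\bg_k^\vee$ with $\lambda_k\ge 0$; Lemma \ref{tr12ab} applied with $\ba=\bb=\bg_k$ represents $\tr_2(\bg_k\bg_k^\vee)$ by the Gram matrix $\hat G_k\hat G_k^{\dagger}\succeq 0$, so each summand lies in the closed cone $\rT_+(\cH_1)$, and hence so does the $\|\cdot\|_1$-convergent sum $\tr_2F$. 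The final chain of equalities then reads $\|\tr_2F\|_1=\tr(\tr_2F)=\tr F=\|F\|_1$, using (3) together with the property $\|A\|_1=\tr A$ on $\rT_+(\cH_1)$ noted earlier in the paper.

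The main obstacle is the well-definedness step: different Schmidt expansions of the same $F$ (for example when singular values are degenerate) must be shown to yield the same $G_2$. The resolution is precisely the Fubini reduction in the second paragraph, which rewrites $\langle G_2\bx,\by\rangle$ as an expression depending only on $F$ and on a chosen orthonormal basis of $\cH_2$; basis-independence of the resulting operator is automatic from the rank-one case handled by Lemma \ref{tr12ab}.
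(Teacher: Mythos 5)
Your proposal is correct and follows essentially the same route as the paper: expand $F$ in its Schmidt decomposition, apply Lemma \ref{tr12ab} to each rank-one term, and use absolute summability of the singular values to get norm convergence in $\rT(\cH_i)$, with positivity in part (4) coming from the rank-one case (the paper uses \eqref{inprodtr2} directly where you use the Gram-matrix representation, a cosmetic difference). Your additional Fubini argument establishing that the resulting operator is intrinsic to $F$ is a reasonable supplement that the paper leaves implicit, but it does not change the approach.
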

\begin{proof}
Assume that $F$ has the following singular value decomposition:
\begin{eqnarray}\label{SVDF}\quad\quad\quad
F=\sum_{k=1}^\infty \sigma_i(F) \ba_k\bb_k^\vee,\langle \ba_k,\ba_l\rangle=\langle \bb_k,\bb_l\rangle, k,l\in\N.
\end{eqnarray}
Then $\|F\|_1=\sum_{k=1}^\infty \sigma_k(F)$.
Hence
\begin{eqnarray*}
&&\tr_2 F=\sum_{k=1}^\infty \sigma_k(F)\tr_2 \ba_k\bb_k^\vee,\\
&&\|\tr_2 F\|_1\le \sum_{k=1}^\infty \sigma_k(F)\|\tr_2 \ba_k\bb_k^\vee\|_1\le \sum_{k=1}^\infty \sigma_k(F)\|\ba_k\|\|\bb_k\|\le\\
&&\sum_{k=1}^\infty \sigma_k(F)=\|F\|_1.
\end{eqnarray*}
This shows that $\tr_2F\in \rT_+(\cH_1)$ and $\|\tr_2 F\|_1\le \|F\|_1$.  Use \eqref{eqalltraces} to deduce that $\tr F=\tr (\tr_2 F)$.  Similar results hold for $\tr_1 F$.

Assume now that $F\succeq 0$.  Then in the decomposition \eqref{SVDF} $\ba_k=\bb_k$ for $k\in\N$.  Use \eqref{eqalltraces} to deduce that
\[\tr F=\sum_{k=1}^\infty \sigma_i(F) \tr \ba_k\ba_k^\vee=\sum_{k=1}^\infty \sigma_i(F) \langle \ba_k,\ba_k\rangle=\sum_{k=1}^\infty \sigma_i(F) =\|F\|_1.\]
We next show that $\tr_2 F\succeq 0$.
Use the equality \eqref{inprodtr2} to deduce
\begin{eqnarray}\label{postr2F}
&&\langle (\tr_2F)\bx,\bx\rangle=\sum_{k=1}^\infty \sum_{j=1}^{N_2} \sigma_k(F)\langle \bx\otimes\be_{j,2},\ba_k\rangle\langle \ba_k,\bx\otimes\be_{j,2}\rangle\ge 0,
\end{eqnarray}
for each $\bx\in\cH_1$.
Hence $\tr_2 F\succeq 0$.  Therefore $\|\tr_2 F\|_1=\tr (\tr_2 F)=\tr F=\|F\|_1$.
Similar arguments apply to $\tr_1 F$.
\end{proof}

\begin{lemma}\label{weakconpartrace} Let $\cH_l$ be a separable Hilbert space of dimension $N_l\in\N\cup\{\infty\}$ for $l\in[2]$.  Set $\cH=\cH_1\otimes\cH_2$.
\begin{enumerate}
\item Assume that $\ba_n, \bb_n,\in\cH, n\in\N$, and  $\ba_n\overset{w}{\to}\ba, \bb_n\overset{w}{\to}\bb$.  Then
\begin{eqnarray}\label{weakconpartrace1}
&&\ba_n\bb_n^\vee\overset{w.o.t.}{\to}\ba\bb^\vee \textrm{ in } \rT(\cH), \\
&&\liminf \tr \ba_n\ba_n^\vee \ge \tr \ba\ba^\vee.
\label{weakconpartrace1in}
\end{eqnarray}
For each $\bx_i\in\cH_i$ for $i\in[2]$  the following inequalities hold
\begin{eqnarray}\label{weakconpartrace2}
&&\liminf \langle (\tr_1 \ba_n\ba_n^\vee) \bx_2,\bx_2\rangle\ge \langle (\tr_1 \ba\ba^\vee) \bx_2,\bx_2\rangle,\\
&&\liminf \langle (\tr_2 \ba_n\ba_n^\vee) \bx_1,\bx_1\rangle\ge \langle (\tr_2 \ba\ba^\vee) \bx_1,\bx_1\rangle.\notag
\end{eqnarray}
Assume that for $l\in[2]$ $N_l$ is finite.
Then
\begin{eqnarray}\label{trleq}
 \tr_l \ba_n\bb_n^\vee \overset{w.o.t.}{\to}\tr_l \ba\bb^\vee.
 \end{eqnarray}
\item Assume that the sequence $\rho_n\in \rT_+(\cH)$ converges in weak operator topology to $\rho\in\rT(\cH)$.  Then $\rho\in T_+(\cH)$ and the following conditions hold:
\begin{eqnarray}\label{totaltraceinrhonrho}
&&\liminf \tr\rho_n\ge \tr\rho,\\
&&\lim_{n\to\infty}\tr\rho_n=\tr\rho \iff \lim_{n\to\infty}\|\rho_n-\rho\|_1=0,
\label{limtraceinrhonrho}\\
\label{p1traceinrhonrho}
&&\liminf \langle (\tr_1 \rho_n) \bx_2,\bx_2\rangle\ge \langle (\tr_1 \rho) \bx_2,\bx_2\rangle, \\
\label{p2traceinrhonrho}
&&\liminf \langle (\tr_2 \rho_n) \bx_1,\bx_1\rangle\ge \langle (\tr_2 \rho) \bx_1,\bx_1\rangle.
 \end{eqnarray}
 If $N_l$ is finite then
 \begin{eqnarray}\label{trleqrho}
 \tr_l \rho_n \overset{w.o.t.}{\to}\tr_l \rho.
 \end{eqnarray}
\end{enumerate}
\end{lemma}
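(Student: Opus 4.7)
\textbf{Proof plan for Lemma~\ref{weakconpartrace}.}

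For part~(1), I will first verify $\ba_n\bb_n^\vee\overset{w.o.t.}{\to}\ba\bb^\vee$ via the factorization $\langle\ba_n\bb_n^\vee\bx,\by\rangle=\langle\bx,\bb_n\rangle\,\langle\ba_n,\by\rangle$, in which each scalar factor converges by weak convergence of $\ba_n$ and $\bb_n$. The bound $\liminf\tr\ba_n\ba_n^\vee\ge\tr\ba\ba^\vee$ is lower semicontinuity of $\|\cdot\|^2$ under weak convergence combined with $\tr\ba\ba^\vee=\|\ba\|^2$ from \eqref{eqalltraces}. For the partial-trace $\liminf$'s I will specialize \eqref{inprodtr1} to $\ba_n\ba_n^\vee$ to write
\[
\langle(\tr_1\ba_n\ba_n^\vee)\bx_2,\bx_2\rangle=\sum_{i=1}^{N_1}|\langle\be_{i,1}\otimes\bx_2,\ba_n\rangle|^2,
\]
a sum of nonnegative terms each convergent by weak convergence of $\ba_n$; Fatou for nonnegative series yields the bound, and the $\tr_2$ argument is symmetric. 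When one of $N_1,N_2$ is finite, the corresponding formula from \eqref{inprodtr2}--\eqref{inprodtr1} collapses to a finite sum of products of weakly convergent scalars, upgrading $\liminf$ to full w.o.t. convergence of $\tr_l\ba_n\bb_n^\vee$.

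For part~(2), $\rho\in\rT_+(\cH)$ is immediate: w.o.t. limits preserve every sesquilinear form $(\bx,\by)\mapsto\langle\cdot\,\bx,\by\rangle$, and quadratic positivity passes to the limit. For $\liminf\tr\rho_n\ge\tr\rho$ I spectrally decompose $\rho=\sum_i\lambda_i\bg_i\bg_i^\vee$, extend $\{\bg_i\}$ to an orthonormal basis of $\cH$, and use $\tr\rho_n\ge\sum_{i\le k}\langle\rho_n\bg_i,\bg_i\rangle\to\sum_{i\le k}\lambda_i$ for every fixed $k$, then let $k\to\infty$. The $\liminf$'s for partial traces repeat the part~(1) argument: expanding each $\rho_n\succeq 0$ spectrally gives
\[
\langle(\tr_1\rho_n)\bx_2,\bx_2\rangle=\sum_{i=1}^{N_1}\langle\rho_n(\be_{i,1}\otimes\bx_2),\be_{i,1}\otimes\bx_2\rangle,
\]
a nonnegative series with termwise w.o.t. convergence; Fatou supplies the inequality, and finite $N_l$ promotes it to full w.o.t. convergence of $\tr_l\rho_n$.

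The substantive content is \eqref{limtraceinrhonrho}. One direction is $|\tr A|\le\|A\|_1$. The converse, which I expect to be the main obstacle, I will prove by finite-rank approximation. Fix $\varepsilon>0$, choose $k$ with $\sum_{i>k}\lambda_i<\varepsilon$, let $P=\sum_{i\le k}\bg_i\bg_i^\vee$, and split
\[
\rho_n-\rho=P(\rho_n-\rho)P+P(\rho_n-\rho)(I-P)+(I-P)(\rho_n-\rho)P+(I-P)(\rho_n-\rho)(I-P).
\]
The first block lies in $\mathrm{range}(P)$, a $k$-dimensional space on which w.o.t. and $\|\cdot\|_1$ coincide, so $\|P(\rho_n-\rho)P\|_1\to 0$. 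The last block is a difference of positives: $\|(I-P)\rho_n(I-P)\|_1=\tr\rho_n-\tr(P\rho_nP)\to\tr\rho-\sum_{i\le k}\lambda_i<\varepsilon$ (using the trace hypothesis and w.o.t. for the finite-rank $P$), and $\|(I-P)\rho(I-P)\|_1<\varepsilon$ by construction, contributing at most $2\varepsilon$ in the limit. For the two off-diagonal blocks I factor $\rho_n=\rho_n^{1/2}\rho_n^{1/2}$ and invoke the Hilbert--Schmidt estimate $\|XY\|_1\le\|X\|_2\|Y\|_2$:
\[
\|P\rho_n(I-P)\|_1\le\sqrt{\tr(P\rho_nP)\cdot\tr((I-P)\rho_n(I-P))}\longrightarrow\sqrt{\tr\rho\cdot\varepsilon},
\]
and likewise for $(I-P)\rho_n P$ and for $\rho$ in place of $\rho_n$. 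Combining, $\limsup\|\rho_n-\rho\|_1\le 2\varepsilon+4\sqrt{\tr\rho\cdot\varepsilon}$; letting $\varepsilon\to 0$ closes the proof. Every other piece of the lemma reduces to Fatou or termwise convergence of a finite sum, so this splitting plus the Hilbert--Schmidt Cauchy--Schwarz control on off-diagonal blocks is the one genuinely nontrivial step.
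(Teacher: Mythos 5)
Your proposal is correct, and for the two substantive pieces of part (2) it takes a genuinely different route from the paper. For the partial-trace $\liminf$'s \eqref{p1traceinrhonrho}--\eqref{p2traceinrhonrho} the paper passes to a subsequence realizing the $\liminf$, truncates the spectral decompositions of the $\rho_n$ at a level $N(\varepsilon)$, and reduces to part (1); your identity $\langle(\tr_1\rho_n)\bx_2,\bx_2\rangle=\sum_{i}\langle\rho_n(\be_{i,1}\otimes\bx_2),\be_{i,1}\otimes\bx_2\rangle$ plus Fatou for nonnegative series gets there in one step and avoids the subsequence extraction entirely (the paper's own part (1) argument with the truncated operators $L_{n,N}$ is really the same Fatou idea, so you have simply carried it through to part (2) directly). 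The bigger divergence is \eqref{limtraceinrhonrho}: the paper invokes Davies' lemma (Lemma \ref{weaknormconv1} in Appendix B), whose proof runs by contradiction through convergence of the eigenvalues and eigenvectors of the $\rho_n$ (via Lemma \ref{wnormconvH}(3)) and a term-by-term estimate of the spectral expansions. You instead project onto the span $P$ of the top $k$ eigenvectors of the \emph{limit} $\rho$, kill the $P(\cdot)P$ block by finite-dimensionality, control the $(I-P)(\cdot)(I-P)$ block by the trace hypothesis, and bound the off-diagonal blocks by $\|X Y\|_1\le\|X\|_2\|Y\|_2$ applied to $\rho_n^{1/2}$, giving $\limsup\|\rho_n-\rho\|_1\le 2\varepsilon+4\sqrt{\varepsilon\tr\rho}$. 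This is a cleaner and essentially self-contained argument that never needs the eigenvectors of $\rho_n$ to converge; the paper's approach, on the other hand, develops machinery (norm convergence of singular vectors under trace convergence) that it reuses elsewhere, e.g.\ in Lemma \ref{convT2} and the proof of Theorem \ref{hmetricfib}. The remaining items (part (1), positivity of the limit, \eqref{totaltraceinrhonrho}, and the finite-dimensional w.o.t.\ claims) are handled essentially as in the paper.
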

\begin{proof} (1)
For each $\bu,\bv\in\cH$ we have the equality $\langle (\ba_n\bb_n^\vee)\bu,\bv\rangle = \langle \bu,\bb_n\rangle \langle \ba_n,\bv\rangle$,  As $\ba_n\overset{w}{\to}\ba, \bb_n\overset{w}{\to}\bb$ we deduce \eqref{weakconpartrace1}.
 Recall that
$\liminf \|\ba_n\|\ge \|\ba\|$.  As $\tr \bc\bc^\vee=\|\bc\|^2$ for $\bc\in\cH$ we deduce \eqref{weakconpartrace1in}.

Assume that $N_2$ is finite.  We prove \eqref{trleq} for $l=2$.  Recall \eqref{inprodtr2} for $\ba_n,\bb_n$:
\begin{eqnarray*}
\langle (\tr_2 \ba_n\bb_n^\vee)\bx,\by\rangle=\sum_{j=1}^{N_2} \langle \bx\otimes\be_{j,2},\bb_n\rangle\langle \ba_n,\by\otimes\be_{j,2}\rangle,\; \bx,\by\in \cH_1
\end{eqnarray*}
Letting $n\to\infty$ we get \eqref{inprodtr2}.  Hence \eqref{trleq} holds for $l=2$.  Similar arguments apply if $N_1$ is finite.

We now show \eqref{weakconpartrace2}.  Assume first that $N_2$ is finite.  Then \eqref{trleq} yields the equality in \eqref{weakconpartrace2}.   Assume that $N_2=\infty$.  Choose $N\in\N$ and let $L_{n,N}$ and $L_N$ be the following finite rank operators in $\rT(\cH_1)$:
\begin{eqnarray*}
&&\langle L_{n,N}\bx,\by\rangle=\sum_{j=1}^{N} \langle \bx\otimes\be_{j,2},\ba_n\rangle\langle \ba_n,\by\otimes\be_{j,2}\rangle,\\
&&\langle L_{N}\bx,\by\rangle=\sum_{j=1}^{N} \langle \bx\otimes\be_{j,2},\ba\rangle\langle \ba,\by\otimes\be_{j,2}\rangle.
\end{eqnarray*}
Clearly, the sequence $L_{n,N}, n\in\N$ converges in weak operator topology to $L_N$ for each $N\in\N$.
Observe next
\[\langle(\tr_2\ba_n\ba_n^\vee)\bx,\bx\rangle=\sum_{j=1}^{\infty} |\langle \ba_n,\bx\otimes\be_{j,2} \rangle|^2\ge \langle L_{n,N}\bx,\bx\rangle.\]
Hence
\[\liminf \langle (\tr_2 \ba_n\ba_n^\vee) \bx,\bx\rangle\ge \langle L_N \bx,\bx\rangle.\]
As $\lim_{N\to\infty}  \langle L_N \bx,\bx\rangle= \langle (\tr_2\ba\ba^\vee) \bx,\bx\rangle$ we deduce the second inequality in\eqref{weakconpartrace2}.
Similarly we deduce the first inequality in\eqref{weakconpartrace2}.

\noindent (2)  The claim that $\rho\in\rT_+(\cH)$ and the inequality  \eqref{totaltraceinrhonrho} follow from Lemma \ref{weaktoplemma}.  To show other claims in part (2) of the lemma we repeat some arguments of the proof of Lemma \ref{weaktoplemma}.
 Assume that the spectral decomposition of $\rho_n$ is $\sum_{k=1}^\infty \sigma_k(\rho_n)\ba_{k,n}\ba_{k,n}^\vee$.   Fix $\bx_l\in\cH_l$ for $l\in[2]$.  We first choose a subsequence $n_p,p\in \N$ such that a particular $\liminf$ stated in part (2) of the lemma is achieved for this subsequence.  Clearly $\rho_{n_p}\overset{w.o.t.}{\to}\rho$.  Hence, without loss of generality we can assume that $n_p=p$ for $p\in\N$.
We choose a subsequence $n_m,m\in\N$ such that
\[\lim_{m\to\infty} \sigma_k(\rho_{n_m})=\sigma_k, \quad \ba_{k,n_m}\overset{w}{\to}\ba_k, \quad k\in\N.\]
As $\rho_{n_m}$ converges weakly also to $\rho$ we deduce that
$\rho=\sum_{k=1}^\infty\sigma_n\ba_k\ba_k^\vee$ and $\|\ba_k\|\le 1$  for $k\in\N$.
Fix $\varepsilon >0$.  Then there exists $N=N(\varepsilon)$ such that $\sum_{k=N}^\infty \sigma_k<\varepsilon$.  Furthermore there exists $k>K_2(\varepsilon)$ such that $\sigma_N(\rho_k)<\varepsilon$.  Let
\begin{eqnarray*}
&&B_n=\sum_{k=1}^N \sigma_k(\rho_n)\ba_{k,n}\ba_{k,n}^\vee, \quad C_n=\sum_{k=N+1}^\infty \sigma_k(\rho_n)\ba_{k,n}\ba_{k,n}^\vee,\\
&&B=\sum_{k=1}^N \sigma_k\ba_{k}\ba_{k}^\vee, \quad C=\sum_{k=N+1}^\infty \sigma_k\ba_{k}\ba_{k}^\vee
\end{eqnarray*}
Then
\begin{eqnarray*}
&&\rho_n=B_n+C_n,\; \rho=B+C,\;  B_n,C_n,B,C\in T_+(\cH),\; \|\rho-B\|_1=\|C\|_1<\varepsilon,\\
&& \tr_l\rho_n\succeq \tr_lB_n, \; \|\tr_l \rho -\tr_l B\|_1=\|\tr _l C\|_1\le \|C\|_1<\varepsilon,\; n\in\N,l\in[2].
\end{eqnarray*}
For $l\in[2]$ let $\{l'\}=[2]\setminus\{l\}$.
Part (1) yields that
\begin{eqnarray*}
&&\liminf \langle (\tr_{l'} \rho_n) \bx_{l},\bx_l\rangle\ge\liminf \langle (\tr_{l'} B_n) \bx_{l},\bx_l\rangle\ge\\
&&\langle (\tr_{l'} B) \bx_l,\bx_l\rangle\ge  \langle (\tr_{l'} \rho) \bx_l,\bx_l\rangle -\varepsilon\|\bx_l\|^2.
\end{eqnarray*}
As $\varepsilon>0$ can be chosen  arbitrary small we deduce all the inequalities in part (2).   The condition \eqref{limtraceinrhonrho} is \cite[Lemma 4.3]{Dav69}, or  Lemma \ref{weaknormconv1}.

Assume that $N_2$ is finite.  Then $\cH$ is isometric to the direct sum of  $N_2$ copies of $\cH_{1}$. Where each copy $\cH_{1,j}$ has the basis $\be_{i,1}\otimes\be_{j,2}$ for $i\in[N_1]$.  Let $\rho_{n,j};\cH_{1,j}\to \cH_{1,j}$  be the restriction of the sesquilinear form $\langle\rho_n \bu,\bv\rangle$, where $\bu=\bx\otimes\be_{j,2}, \bv=\by\otimes\be_{j,2}$.  Observe that $\tr_2\rho_n=\sum_{j=1}^{N_2}\rho_{n,j}$.
Define similarly $\rho^{(j)}$ for $j\in[N_2]$.  Clearly, $\rho_{n,j}\overset{w.o.t.}{\to} \rho^{(j)}$ for $j\in [N_2]$.  Hence $\tr_2 \rho_n\overset{w.o.t.}{\to}\tr_2\rho=\sum_{j=1}^{N_2} \rho^{(j)}$.  Similar results apply if $N_1$ is finite.
\end{proof}

We now give a simple example to show that in part (1) of Lemma \ref{weakconpartrace} we may have strict inequalities.
\begin{example}\label{example}
Assume that $N_1=\infty$.  Consider $\rho_n=(\be_n\otimes\be_1)(\be_n\otimes\be_1)^\vee, n\in\N$.  Then $\be_n\otimes\be_1\overset{w.o.t.}{\to}{\0}$.  So $\rho_n \overset{w.o.t.}{\to} \rho=0$.  Clearly $\tr_2(\rho_n)=\be_n\be_n^\vee\overset{w.o.t.}{\to} 0$, and $\tr_1 \rho_n=\be_1\be_1^\vee$.  Thus $\tr_1 \rho_n$ does not converge weakly to $\tr_1\rho$.
\end{example}
\section{Proof of the main theorems}\label{sec:proofmt}
\subsection{Proof of Theorem \ref{wotnrmconv}}\label{subsec:prfptconv}
As
\begin{eqnarray*}
\|\rho^{(n)}\|_1=\tr\rho^{(n)}=\tr(\tr_1\rho^{(n)})=\tr(\tr_2\rho^{(n)}), \quad \|\rho_i\|_1=\tr\rho_i, i\in[2],
\end{eqnarray*}
we deduce that $\tr\rho_1=\tr\rho_2=\lim_{n\to\infty}\tr\rho^{(n)}$.   Lemma \ref{weaktoplemma} yields that $\tr\rho_1 \ge \tr \rho$.  Lemma \ref{weaknormconv1}
implies that $\lim_{n\to\infty}\|\rho^{(n)}-\rho\|_1=0$ if and only if $\tr\rho_1=\tr\rho$.  Assume to the contrary that $\tr\rho_1=\tr\rho_2>\tr\rho$.

The next claims follow from the results in Appendix \ref{app:conv}.  Recall that $\rT(\cH)\subset \rT_2(\cH)$.  Thus $\rho^{(n)},n\in\N$ and $\rho$ are in $\rT_2(\cH)$.  Hence $\rho^{(n)},n\in\N$ converges in the weak topology to $\rho$ in the Hilbert space $\rT_2(\cH)$.  Banach-Sacks theorem \cite{BanachSaks} yields that there exists a subsequence $n_j, j\in\N$ such that the Ces\`{a}ro subsequence
$\hat \rho_m=\frac{1}{m}\sum_{j=1}^m \rho^{(n_j)}, m\in\N$ converges in the norm $\|\cdot\|_2$ to $\rho$.  It is straightforward to show that
\begin{eqnarray*}
\lim_{m\to\infty}\|\tr_2\hat\rho_m-\rho_1\|_1+\|\tr_1\hat\rho_m-\rho_2\|_1=0.
\end{eqnarray*}

The inequalities \eqref{p2traceinrhonrho} and \eqref{p1traceinrhonrho} yield that
\begin{eqnarray*}
\alpha_1=\rho_1-\tr_2\rho \in \rT_+(\cH_1), \quad \alpha_2=\rho_2-\tr_1\rho \in \rT_+(\cH_2).
\end{eqnarray*}
Note that $\tr\alpha_1=\tr\alpha_2>0$.  Consider the spectral decompositions of $\alpha_1$ and $\alpha_2$:
\begin{eqnarray*}
&&\alpha_1=\sum_{i=1}^\infty \sigma_{i,1}\bg_i\bg_i^\vee, \{\sigma_{i,1}\geq 0 \}\searrow 0, \langle \bg_i,\bg_j\rangle=\delta_{ij}, i,j\in\N, \tr\alpha_1=\sum_{i=1}^\infty \sigma_{i,1},\\
&&\alpha_2=\sum_{i=1}^\infty \sigma_{i,2}\bbf_i\bbf_i^\vee, \{\sigma_{i,2}\geq 0\}\searrow 0, \langle \bbf_i,\bbf_j\rangle=\delta_{ij}, i,j\in\N, \tr\alpha_2=\sum_{i=1}^\infty \sigma_{i,2}.
\end{eqnarray*}

 As $\tr\alpha_1=\tr\alpha_2>0$ there  exists $\delta>0$, such that
\begin{equation}\label{deltavalue}
\sigma_{1,1}>\delta, \, \,
\sigma_{1,2}>\delta, \quad \delta>0.
\end{equation}
For $N\in\N$ let
\begin{eqnarray*}
&&\alpha_{N,1}=\sum_{i=1}^{N}\sigma_{i,1} \bg_i \bg_i^{\vee}, \,  \, \tilde\alpha_{N,1}=\sum_{i=N+1}^{\infty}\sigma_{i,1} \bg_i \bg_i^{\vee},\\ &&\alpha_{N,2}=\sum_{i=1}^{N}\sigma_{i,2} \bbf_i \bbf_i^{\vee}, \, \, \tilde\alpha_{N,2}=\sum_{i=N+1}^{\infty}\sigma_{i,2} \bbf_i \bbf_i^{\vee}.
\end{eqnarray*}
Fix $N$ big enough so that
\begin{eqnarray}\label{Nchoice}
\max(\|\tilde\alpha_{N,1}\|_1, \|\tilde\alpha_{N,2}\|_1)<\delta/10.
\end{eqnarray}

For simplicity of the exposition of the proof we consider the following most difficult case.  First, $\alpha_1$ and $\alpha_2$ are not finite dimensional: $\sigma_{i,1}, \sigma_{i,2}>0$ for all $i\in N$.
Second, let $\tilde\cH_1$ and $\tilde\cH_2$ be the closure of subspaces spanned by $\bg_i, i\in\N$ and $\bbf_i,i\in\N$ respectively.
Let $\hat\cH_i$ be the orthogonal complement of $\tilde \cH_i$ in $\cH_i$ for $i\in [2]$.  Then $\hat \cH_1$ and $\hat\cH_2$ are infinite dimensional with orthonormal bases $\hat \bg_i, i\in\N$ and $\hat\bbf_i,i\in\N$ respectively.
Then $\be_{i,j}, i\in\N$ is an orthonormal basis for $\cH_j$ for $j\in [2]$, where
\begin{eqnarray}\label{orthnbH12}\quad\quad
\be_{2i-1,1}=\bg_i,\; \be_{2i,1}=\hat \bg_i, \quad \be_{2i-1,2}=\bbf_i,\;\be_{2i,2}=\hat\bbf_i, \textrm{ for }i\in\N,j\in[2].
\end{eqnarray}
For $m\in\N$, let $P_{m,j}$  be the orthogonal projection in $\cH_j$ on the subspace spanned by $\be_{i,j}, i\in[2m]$ for $j\in[2]$.  Define  $R_m=P_{m,1}\otimes P_{m,2}$ for $m\in\N$.  Then $P_{m,1}, P_{m,2}, R_m$ converge to the identity operators in the strong operator topology in $\cH_1,\cH_2,\cH$ respectively.  Recall  \cite[Lemma 5]{Fri18}:
\begin{eqnarray*}
\lim_{m\to\infty} \|P_{m,1}\beta_1P_{m,1}-\beta_1\|_1+\|P_{m,2}\beta_2P_{m,2}-\beta_2\|_1+\|R_m\beta R_m-\beta\|_1=0
\end{eqnarray*}
for all $\beta_i\in \rT(\cH_i), i\in[2]$ and  $\beta\in\rT(\cH)$.

 Assume that we have the spectral decompositions
\begin{eqnarray}\label{harrhonexp}\quad\quad
\hat\rho_n=\sum_{i=1}^\infty \lambda_{i,n} \bx_{i,n}\bx_{i,n}^\vee, \{\lambda_{i,n}\}\searrow 0, \langle \bx_{i,n}, \bx_{j,n}\rangle=\delta_{ij},  \tr \hat\rho_n=\sum_{i=1}^\infty \lambda_{i,n},\\\notag
\rho=\sum_{i=1}^\infty \lambda_{i} \bx_{i}\bx_{i}^\vee, \{\lambda_{i}\}\searrow 0, \langle \bx_{i}, \bx_{j}\rangle=\delta_{ij},  \tr \rho=\sum_{i=1}^\infty \lambda_{i}.
\end{eqnarray}
Lemma \ref{convT2} yields that $\lim_{n\to\infty}\lambda_{i,n}=\lambda_i$ for each $i\in\N$.
Furthermore, by passing to a subsequence of $\hat\rho_n$, we can assume that $\lim_{n\to\infty} \|\bx_{i,n}-\bx_{i}\|=0$ for each $\lambda_i>0$.
Again, for simplicity of the exposition of the proof we will assume the most difficult case that $\lambda_i>0$ for each $i\in\N$.

Recall that $\lim_{m\to\infty}\|R_m\rho  R_m-\rho\|_1=0$.
Then there exists $m\in\N$ such that
\begin{eqnarray}\label{Rmrhoin}
\|R_m\rho  R_m-\rho\|_1<\delta/10 \textrm{ and } m>N.
\end{eqnarray}

We now keep $m>N$ fixed.  The inequality  \eqref{SVDinprod} yields
\begin{eqnarray*}
&&\sigma_i(R_m(\hat\rho_n-\rho)R_m)\le \|R_m\|^2\sigma_i(\hat\rho_n-\rho)= \sigma_i(\hat\rho_n-\rho), \textrm{ for }i\in\N\Rightarrow \\
&&\|R_m(\hat\rho_n-\rho)R_m)\|_2\le \|\hat\rho_n-\rho\|_2.
\end{eqnarray*}
As $\lim_{n\to\infty}\|\hat\rho_n-\rho\|_2=0$ we deduce that there exists $M_1\in\N$ such that $\|R_m(\hat\rho_n-\rho)R_m)\|_2\le \delta/(20 m)$ for $n>M_1$.  Recall that $\rank R_m=4m^2$.
Hence $\rank R_m(\hat\rho_n-\rho)R_m\le 4m^2$.  Thus
\begin{eqnarray*}
&&\|R_m(\hat\rho_n-\rho)R_m)\|_1= \sum_{i=1}^{4m^2}\sigma_i(R_m(\hat\rho_n-\rho)R_m)\le \\
&&2m \left(\sum_{i=1}^{4m^2}\sigma_i^2(R_m(\hat\rho_n-\rho)R_m)\right)^{1/2}=
2m\|R_m(\hat\rho_n-\rho)R_m)\|_2\le \delta/10.
\end{eqnarray*}

Part (2) of Lemma \ref{tracecont} yields
\begin{eqnarray}\label{trhatrhoarhoin}
\quad\quad \|\tr_i R_m\hat\rho_nR_m-\tr_i R_m\rho R_m\|_1\le\| R_m\hat\rho_nR_m- R_m\rho R_m\|_1\le \delta/10
\end{eqnarray}
 for $n>M_1$.

In addition, we have $\tr_i\hat\rho_n$ converge in trace norm to $\rho_{i+1}$, where $\rho_3=\rho_1$.  Thus there exists  $M_2$, when $n>M_2$, we have
$$\|\tr_i\hat\rho_n-\rho_{i+1}\|_1\le\delta/10, \textrm{ for } i\in[2].$$

Thus for $n>\max(M_1,M_2)$, we have
\begin{eqnarray}\label{inequality1}
\|\tr_i(\hat\rho_n-R_m\hat\rho_n R_m)-(\rho_{i+1}-\tr_i (R_m\rho R_m))\|_1\le \delta/5.
\end{eqnarray}

Lemma \ref{tracecont} and \eqref{Rmrhoin} imply
\begin{eqnarray}\label{mtrinR}
\|\tr_i (R_m\rho R_m)-\tr_i\rho\|_1\le\|R_m\rho  R_m-\rho\|_1<\delta/10 \textrm{ for }i\in[2].
\end{eqnarray}

 We use $\tr_i \rho$ to replace the $\tr_i (R_m\rho  R_m)$ in (\ref{inequality1}) to get
\begin{eqnarray*}
\|\tr_i(\hat\rho_n-R_m\hat\rho_n R_m)-(\rho_{i+1}-\tr_i\rho )\|_1\le3\delta/10.
\end{eqnarray*}
Let $\tr_0$ stand for $\tr_2$.
Recall that   $\alpha_i=\rho_i-\tr_{i-1}\rho=\alpha_{N,i}+\tilde\alpha_{N,i}$ for $i\in[2]$.
The inequality \eqref{Nchoice} yields
\begin{eqnarray}\label{inequalitytr1}
\|\tr_{i-1}(\hat\rho_n-R_m\hat\rho_n R_m)-\alpha_{N,i}\|_1\le 2\delta/5 \textrm{ for }i\in[2]
\end{eqnarray}
and $n>\max(M_1,M_2)$.
We finally get the contradiction by showing that the above two inequalities are incompatible.

Recall the spectral decomposition of $\hat\rho_n$ given by \eqref{harrhonexp}.
Using the bases of $\cH_1,\cH_2$ defined by \eqref{orthnbH12}, we can write
$$\bx_{i,n}=\sum_{p,q=1}^{\infty}\mu_{p,q}^{i,n}\be_{p,1}\otimes \be_{q,2}. $$

So we have
\begin{eqnarray*}
\lambda_{i,n}\bx_{i,n}\bx_{i,n}^\vee&=&\lambda_{i,n}(\sum_{p,q=1}^{\infty}\mu_{p,q}^{i,n}\be_{p,1}\otimes \be_{q,2})(\sum_{r,s=1}^{\infty}\mu_{r,s}^{i,n}\be_{r,1}\otimes \be_{s,2})^\vee\\
&=&\lambda_{i,n}\left(\sum_{p,q,r,s=1}^{\infty}\mu_{p,q}^{i,n}\bar\mu_{r,s}^{i,n}(\be_{p,1}\otimes \be_{q,2})(\be_{r,1}\otimes \be_{s,2})^\vee\right).
\end{eqnarray*}
Hence
$$\hat\rho_n=\sum_{i=1}^{\infty}\lambda_{i,n}\left(\sum_{p,q,r,s=1}^{\infty}
\mu_{p,q}^{i,n}\bar\mu_{r,s}^{i,n}(\be_{p,1}\otimes \be_{q,2})(\be_{r,1}\otimes \be_{s,2})^\vee\right),$$
\begin{eqnarray*}
\hat\rho_n-R_m \hat\rho_n R_m &=& \sum_{i=1}^{\infty}\lambda_{i,n}\left(\sum_{p,q,r,s=1}^{\infty}\mu_{p,q}^{i,n}
\bar\mu_{r,s}^{i,n}(\be_{p,1}\otimes \be_{q,2})(\be_{r,1}\otimes \be_{s,2})^\vee\right)\\
&-&\sum_{i=1}^{\infty}\lambda_{i,n}\left(\sum_{p,q,r,s=1}^{2m}\mu_{p,q}^{i,n}
\bar\mu_{r,s}^{i,n}(\be_{p,1}\otimes \be_{q,2})(\be_{r,1}\otimes \be_{s,2})^\vee\right)
\end{eqnarray*}

%\end{eqnarray}
Then we have
\begin{eqnarray*}
\tr_1(\bx_{i,n}\bx_{i,n}^\vee)&=&\sum_{p,q,s=1}^\infty \mu_{p,q}^{i,n}\bar\mu_{p,s}^{i,n}\be_{q,2}\be_{s,2}^\vee,          \\
\tr_1\hat\rho_n&=&\sum_{i=1}^{\infty}\lambda_{i,n}(\sum_{p,q,s=1} \mu_{p,q}^{i,n}\bar\mu_{p,s}^{i,n}\be_{q,2}\be_{s,2}^\vee)\\
&=&\sum_{q,s=1}^{\infty}\left( (\sum_{i=1}^{\infty}\sum_{p=1}^{\infty}\lambda_{i,n}\mu_{p,q}^{i,n}
\bar\mu_{p,s}^{i,n})\be_{q,2}\be_{s,2}^\vee\right)\\
\tr_1(\hat\rho_n-R_m\hat\rho_n R_m)&=&\sum_{q,s=1}^{\infty}\left( (\sum_{i=1}^{\infty}\sum_{p=1}^{\infty}\lambda_{i,n}\mu_{p,q}^{i,n}
\bar\mu_{p,s}^{i,n})\be_{q,2}\be_{s,2}^\vee\right)\\
&-&\sum_{q,s=1}^{2m}\left( (\sum_{i=1}^{\infty}\sum_{p=1}^{2m}\lambda_{i,n}\mu_{p,q}^{i,n}\bar\mu_{p,s}^{i,n})
\be_{q,2},\be_{s,2}^\vee\right)
\end{eqnarray*}

Write down the diagonal elements of $\tr_1(\hat\rho_n-R_m\hat\rho_n R_m)$:
\begin{eqnarray}\label{inequation}\quad\quad
&&\sum_{q=1}^{2m}(\sum_{i=1}^{\infty}\sum_{p=2m+1}^{\infty}\lambda_{i,n}\mu_{p,q}^{i,n}
\bar\mu_{p,q}^{i,n})\be_{q,2}\be_{q,2}^\vee+\\
&&\sum_{q=2m+1}^{\infty}(\sum_{i=1}^{\infty}\sum_{p=1}^{\infty}
\lambda_{i,n}\mu_{p,q}^{i,n}\bar\mu_{p,q}^{i,n})\be_{q,2}\be_{q,2}^\vee.
\notag
\end{eqnarray}

As $m>N $ are fixed as mentioned above, and  $n>\max(M_1,M_2)$, according to (\ref{inequalitytr1}), we have
\[\|\tr_1(\hat\rho_n-R_m\hat\rho_n R_m)-\alpha_{N,2}\|_1\le 2\delta/5.\]
Observe that the diagonal elements of $\tr_1(\hat\rho_n-R_m\hat\rho_nR_m)-\alpha_{N,2}$ are:
\begin{eqnarray*}
&&\sum_{t=1}^{N}\big((\sum_{i=1}^{\infty}\sum_{p=2m+1}^{\infty}\lambda_{i,n}|\mu_{p,2t-1}^{i,n}|^2)-\sigma_{t,2}\big) \be_{2t-1,2}\be_{2t-1,2}^\vee+\\
&&\sum_{t=1}^{N}\big(\sum_{i=1}^{\infty}\sum_{p=2m+1}^{\infty}\lambda_{i,n}|\mu_{p,2t}^{i,n}|^2\big) \be_{2t,2}\be_{2t,2}^\vee+\\
&&\sum_{q=2N+1}^{2m}\big(\sum_{i=1}^{\infty}\sum_{p=2m+1}^{\infty}\lambda_{i,n}|\mu_{p,q}^{i,n}|^2\big) \be_{q,2}\be_{q,2}^\vee+\\
&&\sum_{q=2m+1}^{\infty}\big(\sum_{i=1}^{\infty}\sum_{p=1}^{\infty}\lambda_{i,n}|\mu_{p,q}^{i,n}|^2\big) \be_{q,2}\be_{q,2}^\vee.
\end{eqnarray*}
by Lemma \ref{tracein} yields that the absolute values of the diagonal elements of $\tr_1(\hat\rho_n-R_m\hat\rho_n R_m)-\alpha_{N,2}$ are bounded by $\|\tr_1(\hat\rho_n-R_m\hat\rho_n R_m)-\alpha_{N,2}\|_1\le 2\delta/5$.
As $\lambda_{i,n}\ge 0$ for $i,n\in\N$ we deduce
\begin{eqnarray*}
&&\sum_{t=1}^{N}|(\sum_{i=1}^{\infty}\sum_{p=2m+1}^{\infty}\lambda_{i,n}|\mu_{p,2t-1}^{i,n}|^2)-\sigma_{t,2}|+
\sum_{t=1}^{N}\sum_{i=1}^{\infty}\sum_{p=2m+1}^{\infty}\lambda_{i,n}|\mu_{p,2t}^{i,n}|^2+\\
&&\sum_{q=2N+1}^{2m}\sum_{i=1}^{\infty}\sum_{p=2m+1}^{\infty}\lambda_{i,n}|\mu_{p,q}^{i,n}|^2+\sum_{q=2m+1}^{\infty}\sum_{i=1}^{\infty}\sum_{p=1}^{\infty}\lambda_{i,n}|\mu_{p,q}^{i,n}|^2\le 2\delta/5.
\end{eqnarray*}
In particular we deduce the following two inequalities:
\begin{eqnarray}\notag
&&|(\sum_{i=1}^{\infty}\sum_{p=2m+1}^{\infty}\lambda_{i,n}|\mu_{p,1}^{i,n}|^2)-\sigma_{1,2}|\le 2\delta/5\\ \label{sec1cin}
&&\sum_{q=2m+1}^{\infty}\sum_{i=1}^{\infty}\lambda_{i,n}|\mu_{1,q}^{i,n}|^2\le 2\delta/5
\end{eqnarray}
The inequality \eqref{deltavalue} and the first above inequality yield
\begin{eqnarray}\label{fir1cin}
\sum_{i=1}^{\infty}\sum_{p=2m+1}^{\infty}\lambda_{i,n}|\mu_{p,1}^{i,n}|^2\ge \delta-2\delta/5=3\delta/5
\end{eqnarray}

Consider now similar inequaities for the diagonal entries of $\tr_2(\hat\rho_n-R_m\hat\rho_nR_m)-\alpha_{N,1}$.  Then the analogous inequality to \eqref{sec1cin} is
\begin{eqnarray*}
\sum_{p=2m+1}^{\infty}\sum_{i=1}^{\infty}\lambda_{i,n}|\mu_{p,1}^{i,n}|^2\le 2\delta/5.
\end{eqnarray*}
But this inequality contradicts the inequality \eqref{fir1cin}.

The  equalities $\tr\rho_1=\tr\rho_2=\lim_{n\to\infty}\tr\rho^{(n)}$ establishes the last part of the theorem.  
\subsection{Proof of Theorem \ref{XinfH2fin}}\label{subsec:prfqs}
We first observe:
\begin{lemma}\label{conff}  Let $\cH_1,\cH_2$ be two separable Hilbert spaces.
Assume that $\rho_i\in\rT(\cH_i)$ for $i\in[2]$.
Let $\cH=\cH_1\otimes\cH_2$.   Then the function $f:\rT(\cH)\to [0,\infty)$  given by
\eqref{deffuncf} is a convex Lipschitz function with the Lipschitz constant $2$.  Furthermore
\begin{eqnarray}\label{lowbdf}
f(X)\ge 2\|X\|_1-\|\rho_1\|_1-\|\rho_2\|_1 \textrm{ for } X\in\rT_+(\cH).
\end{eqnarray}
\end{lemma}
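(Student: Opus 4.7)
The proof naturally splits into three parts: convexity, the Lipschitz bound, and the lower bound.

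For convexity, I would observe that $f$ is the sum of two functions of the form $X \mapsto \|\tr_i X - \rho_j\|_1$. Each such function is the composition of an affine map $X \mapsto \tr_i X - \rho_j$ with the norm $\|\cdot\|_1$, hence convex. The sum of two convex functions is convex.

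For the Lipschitz constant, I would use the reverse triangle inequality together with Lemma \ref{tracecont}(2), which states that the partial traces are contractions on $\rT(\cH)$. For any $X,Y \in \rT(\cH)$,
\begin{eqnarray*}
|f(X) - f(Y)| &\le& \bigl|\|\tr_2 X - \rho_1\|_1 - \|\tr_2 Y - \rho_1\|_1\bigr| \\
&& + \bigl|\|\tr_1 X - \rho_2\|_1 - \|\tr_1 Y - \rho_2\|_1\bigr| \\
&\le& \|\tr_2(X-Y)\|_1 + \|\tr_1(X-Y)\|_1 \le 2\|X-Y\|_1.
\end{eqnarray*}
This gives the Lipschitz constant $2$.

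For the lower bound on $\rT_+(\cH)$, the key input is Lemma \ref{tracecont}(4): when $X \in \rT_+(\cH)$, both $\tr_1 X$ and $\tr_2 X$ lie in $\rT_+$ of the respective factor, and their trace norms satisfy $\|\tr_i X\|_1 = \tr(\tr_i X) = \tr X = \|X\|_1$. Applying the reverse triangle inequality $\|A-B\|_1 \ge \|A\|_1 - \|B\|_1$ to each summand,
\begin{eqnarray*}
f(X) &\ge& \bigl(\|\tr_2 X\|_1 - \|\rho_1\|_1\bigr) + \bigl(\|\tr_1 X\|_1 - \|\rho_2\|_1\bigr) \\
&=& 2\|X\|_1 - \|\rho_1\|_1 - \|\rho_2\|_1,
\end{eqnarray*}
establishing \eqref{lowbdf}.

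No part of this is a serious obstacle; the proof is essentially bookkeeping once one invokes the contraction and trace-preservation properties of the partial traces recorded in Lemma \ref{tracecont}. The only subtlety worth flagging is that for the lower bound one really does need positivity of $X$ (to convert trace norms of partial traces into $\|X\|_1$); without it only the weaker bound $f(X) \ge \|\tr_2 X\|_1 + \|\tr_1 X\|_1 - \|\rho_1\|_1 - \|\rho_2\|_1$ is available and this can be strictly smaller than $2\|X\|_1 - \|\rho_1\|_1 - \|\rho_2\|_1$.
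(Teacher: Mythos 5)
Your proposal is correct and follows essentially the same route as the paper: the Lipschitz bound via the reverse triangle inequality and the contraction property of the partial traces, and the lower bound via positivity of $X$ and $\|\tr_j X\|_1=\tr(\tr_j X)=\tr X=\|X\|_1$. The only cosmetic difference is that you obtain convexity by noting $f$ is a sum of norms composed with affine maps, whereas the paper writes out the triangle-inequality computation for $tX_1+(1-t)X_2$ explicitly; the underlying argument is the same.
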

\begin{proof}Assume that $X_1,X_2\in\rT(\cH)$.
We first show that $f$ is a Lipschitz function with the Lipschitz constant $2$.   Then
\begin{eqnarray*}
&&|f(X_1)-f(X_2)|\\
&&=|\|\tr_2 X_1-\rho_1\|_1-\|\tr_2 X_2-\rho_1\|_1 +\|\tr_1 X_1-\rho_2\|_1-\|\tr_1 X_2-\rho_2\|_1|\\
&&\le|\|\tr_2 X_1-\rho_1\|_1-\|\tr_2 X_2-\rho_1\|_1|+|\|\tr_1 X_1-\rho_2\|_1-\|\tr_1 X_2-\rho_2\|_1|\\
&&\le\|\tr_2(X_1-X_2)\|_1 +\|\tr_1(X_1-X_2)\|_1\le2 \|X_1-X_2\|_1.
\end{eqnarray*}
We now show the convexity of $f$.  Assume that $t\in(0,1)$.  Let $X=tX_1+(1-t)X_2$.  Then
\begin{eqnarray*}
f(X)&=&\|\tr_2(tX_1+(1-t)X_2)-(t+(1-t))\rho_1\|_1\\
&&+\|\tr_1(tX_1+(1-t)X_2)-(t+(1-t))\rho_2\|_1\\
&\le& t\|\tr_2 X_1-\rho_1\|_1+(1-t)\|\tr_2 X_2-\rho_1\|_1\\
&&+t\|\tr_1 X_1-\rho_2\|_1+(1-t)\|\tr_1 X_2-\rho_2\|_1\\
&=&tf(X_1)+(1-t)f(X_2).
\end{eqnarray*}

Assume that $X\in\rT_+(\cH)$.  Then $\tr_j X\in \rT_+(\cH_{j+1})$ for $j\in[2]$, where $\cH_3=\cH_1$.  Hence $\|X\|_1=\tr X=\tr(\tr_j X)=\|\tr_j X\|_1$ for $j\in[2]$.  The triangle inequality yields
\begin{eqnarray*}
f(X)\ge \|\tr_2 X\|_1 -\|\rho_1\|_1 +\|\tr_1 X\|_1-\|\rho_2\|_1=2\|X\|_1-\|\rho_1\|_1-\|\rho_2\|_1.
\end{eqnarray*}
\end{proof}
\begin{lemma}\label{inffrho}  Let the assumptions of Lemma \ref{conff}  hold.
Assume that $\cX\subseteq \cH$ is a closed infinite dimensional subspace with an orthonormal basis $\bx_i, i\in\N$.  Let $\cX_n$ be the subspace spanned by $\bx_1,\ldots,\bx_n$ for $n\in\N$.  Consider the infimum \eqref{defminSX1n}.  Then
\begin{eqnarray}\label{inf=min}\quad\quad
\mu_n(\rho_1,\rho_2)=\min\{f(X), X\in\rS_+(\cX_n), \|X\|_1\le \|\rho_1\|_1+\|\rho_2\|_1\}.
\end{eqnarray}
 Furthermore, the sequence $\mu_n(\rho_1,\rho_2), n\in\N$ is nonincreasing.
\end{lemma}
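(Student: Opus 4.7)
The plan is to use the lower bound from Lemma \ref{conff} to confine the search for the infimum to a bounded set, then exploit finite-dimensionality of $\cX_n$ to obtain a minimizer, and finally read off the monotonicity from the nesting $\cX_n \subseteq \cX_{n+1}$.

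First I would reduce to the bounded set. Note $0 \in \rS_+(\cX_n)$ and $f(0) = \|\rho_1\|_1 + \|\rho_2\|_1$, so
\begin{equation*}
\mu_n(\rho_1,\rho_2) \le \|\rho_1\|_1 + \|\rho_2\|_1.
\end{equation*}
By \eqref{lowbdf} of Lemma \ref{conff}, for any $X \in \rS_+(\cX_n) \subseteq \rT_+(\cH)$,
\begin{equation*}
f(X) \ge 2\|X\|_1 - \|\rho_1\|_1 - \|\rho_2\|_1,
\end{equation*}
so whenever $\|X\|_1 > \|\rho_1\|_1 + \|\rho_2\|_1$ one has $f(X) > \|\rho_1\|_1 + \|\rho_2\|_1 \ge \mu_n(\rho_1,\rho_2)$. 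Hence such $X$ play no role in the infimum, and restricting to $\|X\|_1 \le \|\rho_1\|_1 + \|\rho_2\|_1$ does not change its value.

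Second, I would show the infimum is attained. The restricted feasible set
\begin{equation*}
K_n = \{X \in \rS_+(\cX_n) : \|X\|_1 \le \|\rho_1\|_1 + \|\rho_2\|_1\}
\end{equation*}
sits inside the finite-dimensional real vector space $\rS(\cX_n)$, and on a finite-dimensional space all norms are equivalent, so $K_n$ is closed and bounded, hence compact. By Lemma \ref{conff}, $f$ is Lipschitz on $\rT(\cH)$ and therefore continuous, so it attains its minimum on $K_n$. Combining with the previous step gives the claimed equality \eqref{inf=min}.

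Finally, for monotonicity, the nesting $\cX_n \subseteq \cX_{n+1}$ yields $\rS_+(\cX_n) \subseteq \rS_+(\cX_{n+1})$ (extending operators on $\cX_n$ by zero on the orthogonal complement), so the infimum over the larger set is no larger than the infimum over the smaller one, i.e.\ $\mu_{n+1}(\rho_1,\rho_2) \le \mu_n(\rho_1,\rho_2)$. I do not anticipate any real obstacle here: the only slightly nontrivial point is using the coercivity bound \eqref{lowbdf} to pass from the a priori unbounded feasible cone $\rS_+(\cX_n)$ to a compact subset, after which continuity and finite-dimensionality finish everything off.
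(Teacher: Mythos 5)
Your proposal is correct and follows essentially the same route as the paper's own proof: bound $\mu_n$ by $f(0)=\|\rho_1\|_1+\|\rho_2\|_1$, use the coercivity estimate \eqref{lowbdf} to discard all $X$ with $\|X\|_1>f(0)$, invoke compactness of the resulting finite-dimensional set together with continuity of $f$ to attain the minimum, and read off monotonicity from $\cX_n\subset\cX_{n+1}$. No gaps.
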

\begin{proof}  Clearly $f(0)=\|\rho_1\|_1+\|\rho_2\|_1$.  Hence $\mu_n(\rho_1,\rho_2)\le f(0)$.  Suppose that $X\in\rT_+(\cH)$ and $\|X\|_1>f(0)$.  The inequality \eqref{lowbdf} yields that $f(X)\ge 2\|X\|_1-f(0)>f(0)$.  Hence it is enough to consider the infimum \eqref{defminSX1n} restricted to $\{X\in\rS_{+}(\cX_n), \|X\|_1\le f(0)\}$.  This is a  compact finite dimensional set.  Hence the infimum is achieved.
As $\cX_n\subset \cX_{n+1}$ we deduce that $\mu_{n+1}(\rho_1,\rho_2)\le \mu_n(\rho_1,\rho_2)$ for each $n\in\N$.
\end{proof}
\textbf{Proof of Theorem \ref{XinfH2fin}.}
First assume that there exists $\rho\in \rT_+(\cH)$ such that $\tr_2\rho=\rho_1, \tr_1\rho=\rho_2$ and $\supp\rho\subseteq\cX$.  As $\tr\rho=\tr\rho_1$ we deduce that $\rho\in\rS_{+,1}(\cH)$.  Next observe  $\rho\in T_+(\cX)$.  Let $P_n\in B(\cH)$ be the projection on span of $\bx_1,\ldots,\bx_n$.  Then $P_n\in B(\cX)$ and $P_n, n\in\N$ converges in the strong operator topology to $I_{\cX}$.  Lemma 5 in \cite{Fri18} yields that $\lim_{n\to\infty}\|P_n\rho P_n-\rho\|_1=0$ in $\rT(\cX)$.  As $\supp P_n\rho P_n\subseteq\cX_n$ it follows that $P_n \rho P_n\in\rS_+(\cX_n)$ converges to $\rho$ in norm in $T(\cH)$.   Hence $\lim_{n\to\infty} f(P_n\rho P_n)=0$.
Clearly, $\mu_n(\rho_1,\rho_2)\le f(P_n\rho P_n)$.  Hence $\lim_{n\to\infty}\mu_n(\rho_1,\rho_2)=0$.

Second assume that $\lim_{n\to\infty}\mu_n(\rho_1,\rho_2)=0$.  Assume that $\rho^{(n)}\in \rT_+(\cH)$, $\supp\rho^{(n)}\subseteq \cX_n$ and $\mu_n(\rho_1,\rho^{(n)})=f(\rho^{(n)})$.  Clearly
$$\lim_{n\to\infty}\|\rho^{(n)}\|_1=\lim_{n\to\infty}\tr\rho^{(n)}=\|\rho_1\|=\tr\rho_1.$$
Thus the sequence $\rho^{(n)},n\in\N$ is bounded. Hence, there exists a subsequence $\rho^{(n_k)}$ which converges in weak operator topology to $\rho$.  Let $\bx\in \cX^{\perp}$.  Then $\bx\in\cX_n^\perp$.  Therefore $\rho^{(n)}\bx=0$ and $\langle\rho^{(n)}\bx,\by\rangle=0$ for each $\by\in\cH$.  As $\rho^{(n_k)}\overset{w.o.t.}{\to}\rho$ we deduce that $\langle\rho\bx,\by\rangle=0$ for each $\by\in\cH$.  Hence $\rho\bx=\0$.  Thus $\supp \rho\subseteq \cX$.  As $\lim_{k\to\infty}f(\rho^{(n_k)})=0$ Theorem \ref{wotnrmconv} yields that $\lim_{k\to\infty}\|\rho^{(n_k)}-\rho\|_1=0$.  Hence $\tr_2\rho=\rho_1$ and $\tr_1\rho=\rho_2$.\qed
\section{An SDP solution when $\cX$ is finite dimensional}\label{sec:dfcX}
The quantum Strassen problem can be easily generalized to a standard semidefinite  problem in the finite dimensional case.  The feasible set is bounded and contains a positive definite matrix.  Hence we can solve this problem using interior-point methods \cite{NN1994}.  Moreover, the strong duality for this SDP problems holds.  In this section we show that we can extend this approach to separable infinite dimensional $\cH_1$ and $\cH_2$ provided that $\cX$ is finite dimensional.
\subsection{Finite dimensional case }\label{sub:fdgenSDP}
Let $\cH=\cH_1\otimes \cH_2$ be a finite dimensional Hilbert space. Let $\mathcal{X}\subseteq \mathcal{H}$ be a closed subspace. Given two partial density  operators $\rho_{i}\in \rS_{+}(\H_i)\setminus\{0\}$, $i\in[2]$. We now state the following  SDP problem:
\begin{eqnarray}\label{maxSDPrho12}
&&\mu(\rho_1,\rho_2,\cX)=\max\{\tr (XP_{\cX}), \;X\in \rS_+(\cH), \tr_2 X\preceq \rho_1, \tr_1 X\preceq \rho_2\}.
\notag
\end{eqnarray}
Note that the feasible set is convex and bounded, as $\tr X\le\min (\tr\rho_1,\tr\rho_2)$.
If $\supp\rho_i=\cH_i$ for $i\in[2]$ then a feasible set contains a positive definite matrix.   In other cases it is easy to show that it is enough to restrict the problem to $\cH_i'=\supp\rho_i$ for $i\in[2]$ and $\cH'=\cH_1'\otimes \cH_2'$.  Then we can replace $\cX$ by $\cX'=\cX\cap \cH'$.

We write down its primal problem and dual problem.
\begin{displaymath}
	\begin{array}[t]{cc}

\begin{tabular}{c}
\underline{ Primal problem}\\

\begin{tabular}[t]{ll}
maximize:   & $\langle A,X \rangle$, \\

subject to: &

   $\Phi(X)\preceq B;$\\
&  $X\in \rS_+(\mathcal{H}_{1}\otimes\mathcal{H}_{2})$

\end{tabular}

		\end{tabular}

&

\begin{tabular}{c}
\underline{ Dual problem}\\

\begin{tabular}[t]{ll}
minimize:   & $\langle B,Y \rangle$ \\

subject to: &
$\Phi^{*}(Y)\succeq A;$\\
&$Y\in \rS_+(\cH_1\oplus\cH_2)$\\
\end{tabular}

		\end{tabular}
	\end{array}
\end{displaymath}
Here 
\begin{eqnarray*}
&&\Phi:\rS_+(\cH_1\otimes \cH_2)\to \rS_+(\cH_1\oplus \cH_2),  \quad \Phi^*:\rS_+(\cH_1\oplus \cH_2)\to\rS_+(\cH_1\otimes \cH_2)\\ 
&& A=P_{\cX}, \quad B= \begin{bmatrix} \rho_1 & \\ &\rho_2 \end{bmatrix},\nonumber \\
&&   \Phi(X)=\begin{bmatrix} \tr_2(X) & \\ &\tr_1(X) \end{bmatrix}, \nonumber \\
&&  \Phi^*(Y)=\Phi^*\begin{bmatrix} Y_1 &\\ & Y_2 \end{bmatrix}=Y_1\otimes I_2 + I_1 \otimes Y_2. \nonumber
\end{eqnarray*}
It's easy to check the following equality:
\[\forall M,N, \langle \Phi(M),N \rangle=\langle M,\Phi^*(N) \rangle.   \]
Moreover, the strong duality holds for this semidefinite program as we can check that the primal feasible set is not empty, ($0$ is an allowable point),  and there exists an interior point in the dual feasible set.
\begin{itemize}
\item
A primal feasible point: set $X=0 \in \rS_+(\cH_1\otimes\cH_2),  \tr_1(X)\preceq\rho_2,\tr_2(X)\preceq\rho_1$.
\item
A dual strict feasible point: set $Y= I_1\oplus I_2 \in \rS_+(\cH_1\oplus\cH_2), \Phi^*(Y)=2I_{12}\succ P_{\cX}$.
\end{itemize}
Hence, the primal and dual problems have no duality gap and
 the bounded optimal solution of (\ref{maxSDPrho12}) can be computed by interior point methods \cite{NN1994}.

\begin{theorem}\label{finitecase}  Let $\rho_i\in S_{+,1}(\cH_i), i\in[2]$.  Assume that $\cX\subset \cH$. There exists $\rho\in S_{+,1}(\cH), \supp \rho\subseteq \cX$ such that $\tr_2\rho=\rho_1, \tr_1\rho=\rho_2$ if and only if $\mu(\rho_1,\rho_2,\cX)=1$.
\end{theorem}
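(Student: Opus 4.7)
The plan is to prove both implications by extracting strong information from the quantitative equality $\tr(XP_\cX) = 1$, exploiting the positivity of $X$ and the unit-trace normalization of $\rho_1$ and $\rho_2$.

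For the forward direction, suppose $\rho \in \rS_{+,1}(\cH)$ with $\supp \rho \subseteq \cX$ and $\tr_j \rho = \rho_{3-j}$ for $j \in [2]$. Then $\rho$ is feasible for the SDP defining $\mu(\rho_1,\rho_2,\cX)$, since the constraints $\tr_2 \rho \preceq \rho_1$ and $\tr_1 \rho \preceq \rho_2$ hold with equality. Because $\supp \rho \subseteq \cX$ means $P_\cX \rho = \rho = \rho P_\cX$, we obtain $\tr(\rho P_\cX) = \tr \rho = 1$, so $\mu(\rho_1,\rho_2,\cX) \geq 1$. To see that $\mu(\rho_1,\rho_2,\cX) \leq 1$, note that for any feasible $X$, part (4) of Lemma \ref{tracecont} applied to $\rho_1 - \tr_2 X \in \rS_+(\cH_1)$ gives $\tr X = \tr(\tr_2 X) \leq \tr \rho_1 = 1$; combined with $0 \preceq P_\cX \preceq I$ this yields $\tr(XP_\cX) \leq \tr X \leq 1$.

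For the backward direction, suppose $\mu(\rho_1,\rho_2,\cX) = 1$. By the discussion in Subsection \ref{sub:fdgenSDP}, strong duality holds and the feasible set is compact, so the supremum is attained at some $X^* \in \rS_+(\cH)$ with $\tr_2 X^* \preceq \rho_1$, $\tr_1 X^* \preceq \rho_2$, and $\tr(X^* P_\cX) = 1$. The chain of inequalities from the forward direction must collapse: $1 = \tr(X^* P_\cX) \leq \tr X^* \leq \tr \rho_1 = 1$, so $\tr X^* = 1$ and $\tr(X^*(I - P_\cX)) = 0$. Since $X^* \succeq 0$ and $I - P_\cX \succeq 0$, the standard identity $\tr(X^*(I - P_\cX)) = \tr\bigl((I - P_\cX)^{1/2} X^* (I - P_\cX)^{1/2}\bigr)$ together with the fact that a positive semidefinite operator of trace zero is zero forces $X^{*1/2}(I - P_\cX) = 0$, hence $X^* = P_\cX X^* P_\cX$ and $\supp X^* \subseteq \cX$.

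It remains to upgrade the marginal inequalities to equalities. The PSD operator $\rho_1 - \tr_2 X^*$ has trace $\tr \rho_1 - \tr X^* = 0$, so it vanishes; that is, $\tr_2 X^* = \rho_1$. The same reasoning gives $\tr_1 X^* = \rho_2$. Setting $\rho := X^*$ yields the required density operator, so the statement holds in both directions. The argument is essentially bookkeeping with positivity and trace; the only mild subtlety is that one must use compactness of the feasible set (or strong duality together with attainment) to guarantee that the maximum is achieved, which is precisely what the SDP setup of Subsection \ref{sub:fdgenSDP} provides.
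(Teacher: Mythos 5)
Your proof is correct and follows essentially the same route as the paper's: feasibility of $\rho$ plus the chain $\tr(XP_{\cX})\le\tr X=\tr(\tr_2X)\le\tr\rho_1=1$ for one direction, and collapsing that chain at the maximizer to force $\supp X^*\subseteq\cX$ and then killing the PSD, trace-zero operators $\rho_1-\tr_2X^*$ and $\rho_2-\tr_1X^*$ for the other. You merely spell out a few steps the paper leaves implicit (attainment of the maximum, and the $(I-P_{\cX})^{1/2}X^*(I-P_{\cX})^{1/2}$ argument for the support containment).
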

\begin{proof}
Assume that there exists $\rho\in S_{+,1}(\cH), \supp \rho\subseteq \cX$ such that $\tr_2\rho=\rho_1, \tr_1\rho=\rho_2$. We choose $X=\rho$, so $\tr(\rho P_{\cX})=\tr(\rho)=1$  as $\supp \rho\subseteq \cX$. For every feasible point  $X$, $\tr(XP_{\cX})\leq\tr(X)=\tr(\tr_2(X))\leq\tr(\rho_1)=1$. So $\mu(\rho_1,\rho_2,\cX)=1$.

Assume $\mu(\rho_1,\rho_2,\cX)=1$ and  the maximum is reached by  $X_{max}$.
 Then we have $1=\tr(X_{max}P_{\cX})\leq\tr(X_{max})\leq\tr(\rho_1)=1$, so $\tr(X_{max}P_{\cX})=\tr(X_{max})$, it means that $\supp (X_{max})\subset\cX$.
  From $\tr_2 X\preceq \rho_1$ and $\tr(\rho_1-\tr_2(X_{max}))=0$, we derive $\rho_1=\tr_2(X_{max})$.  In the same way,  we can show  $\rho_2=\tr_1(X_{max})$.
\end{proof}

According to Theorem \ref{finitecase}, we can check the existence of quantum lifting by checking whether $\mu(\rho_1,\rho_2,\cX)$ is equal to  $1$. This can be done numerically by  verifying  if $\mu(\rho_1,\rho_2,\cX)>1 - \varepsilon$ for a given $\varepsilon$ in polynomial time in the given data,  see Nesterov and Nemirovsky \cite{NN1994}.
\subsection{Infinite dimensional case}\label{subsec:infdcase}
In this subsection we assume that $\cX\subset \cH$ is finite dimensional.
\subsubsection{ $\cH_1$ is infinite dimensional and  $\cH_2$ is finite dimensional}\label{subsub:H1fd}
\begin{lemma}\label{H2Xfd}  Let $\mathcal{H}_{1},\mathcal{H}_{2}$ be separable Hilbert spaces of dimensions $N_1=\infty,N_2<\infty$.  Assume that $\cX\subset \cH$ is a finite dimensional subspace of dimension $N$.  Then there exists a finite dimensional subspace $\cH_1'\subset \cH_1$ of dimension $NN_2$ at most such that
$\cX\subset \cH'=\cH_1'\otimes\cH_2$.
\end{lemma}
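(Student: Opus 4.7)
The plan is a direct decomposition argument using the given orthonormal basis of the finite dimensional tensor factor $\cH_2$. First I would fix an orthonormal basis $\be_{1,2},\ldots,\be_{N_2,2}$ of $\cH_2$. Every vector $\bx\in\cH_1\otimes\cH_2$ then has a unique expansion
\[
\bx=\sum_{j=1}^{N_2}\bv_j(\bx)\otimes\be_{j,2},\qquad \bv_j(\bx)\in\cH_1,
\]
because this basis provides an isometric isomorphism $\cH\cong\cH_1^{\oplus N_2}$, with the coefficient vectors given by $\bv_j(\bx)=(I_{\cH_1}\otimes\be_{j,2}^{\vee})\bx$.

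Next, I would choose any basis $\bx_1,\ldots,\bx_N$ of $\cX$ and define
\[
\cH_1'=\spa\bigl\{\bv_j(\bx_k):k\in[N],\;j\in[N_2]\bigr\}\subset\cH_1.
\]
This is clearly a finite dimensional subspace of $\cH_1$ with $\dim\cH_1'\le N N_2$, simply because it is spanned by $N N_2$ vectors.

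It remains to verify that $\cX\subseteq\cH_1'\otimes\cH_2$. For any $\bx\in\cX$ write $\bx=\sum_{k=1}^{N}c_k\bx_k$; by linearity of the coefficient maps $\bv_j(\cdot)$ we obtain $\bv_j(\bx)=\sum_{k=1}^{N}c_k\bv_j(\bx_k)\in\cH_1'$ for each $j$, hence
\[
\bx=\sum_{j=1}^{N_2}\bv_j(\bx)\otimes\be_{j,2}\in\cH_1'\otimes\cH_2,
\]
completing the argument. I do not expect a real obstacle here: the statement is essentially the observation that a finite dimensional subspace of $\cH_1\otimes\cH_2$ involves only finitely many coefficient vectors in $\cH_1$ once we decompose along the finite basis of $\cH_2$, and the dimension count $N N_2$ is the trivial bound obtained by taking all coefficient vectors of a chosen basis of $\cX$.
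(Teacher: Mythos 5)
Your proposal is correct and follows essentially the same route as the paper: decompose each basis vector of $\cX$ along the finite orthonormal basis of $\cH_2$, collect the resulting $N N_2$ coefficient vectors in $\cH_1$, and take their span as $\cH_1'$. The only cosmetic difference is that you phrase the coefficients via the maps $\bv_j(\cdot)=(I_{\cH_1}\otimes\be_{j,2}^{\vee})(\cdot)$ and spell out the final linearity check, whereas the paper writes the coefficients in coordinates and leaves that check implicit.
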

\begin{proof} Assume that $\be_{i,1}, i\in \N$ is an orthonormal basis in $\cH_1$, and $\cH_2$ has an orthonormal basis
$\{\be_{1,2},\dots,\be_{N_2,2}\}$.  Assume that $\bx_1,\ldots,\bx_N$ is a basis in $\cX$.  Then
\[\bx_l=\sum_{i=p=1}^{\infty, N_2}x_{ip,l} \be_{i,1}\otimes\be_{{p},2}, \quad l\in [N].\]
Set $\bu_{l,p}=\sum_{i=1}^\infty x_{ip,l}\be_{i,1}$.  Then $\bx_l=\sum_{p=1}^{N_2} \bu_{l.p}\otimes \be_{p,2}$.  Let $\cH_1'$ be the subspace of $\cH_1$ spanned by $\bu_{l.p}$ for $l\in [N], p\in [N_2]$.  Then $\dim\H_1'\le N N_2$ and $\cX\subseteq \cH_1'\otimes \cH_2$.
\end{proof}
Thus, in this case  the coupling problem is a finite dimensional problem.
\subsubsection{$\cH_1$ and $\cH_2$ are infinite dimensional}

Assume that $\cH$ is an infinite dimensional separable Hilbert space.  Let $\cX$ be a closed subspace.  Then $\rB(\cX)$ is the subspace of all bounded operators in $L\in \rB(\cH)$ such that $L(\cX)\subseteq\cX$ and $L(\cX^\perp)=0$.  In particular, $L\in \rB(\cH)$ has support in $\cX$ if and only if $L\in \rB(\cX)$.

We assume now that $\cX$ is finite dimensional, and $N=\dim \cX$.
  Then $\rB(\cX)$ has complex dimension $N^2$.  It can be identified with $\C^{N\times N}$ as follows.  Fix an orthonormal basis $\bx_1,\ldots,\bx_N$ in $\cX$.  Then a basis in $\rB(\cX)$ is $\bx_i\bx_j^\vee$ for $i,j\in[N]$.  Thus $L\in \rB(\cX)$ is of the form $L=\sum_{i=j=1}^{N} a_{ij}\bx_i\bx_j^\vee$.  Hece $L$ is one-to-one correspondence with $A=[a_{ij}]\in \C^{N\times N}$.  Observe next that $L\in \rS(\cX)$ if and only if $A$ is Hermitian.

In what follows we need the following lemma:
\begin{lemma}\label{convfdsup}  Let $\cH$ be an infinite dimensional separable Hilbert space.  Assume that $\cX\subset \cH$ is a finite dimensional subspace of dimension $N$.  Assume that $\bx_1,\ldots,\bx_N$ is an orthonormal basis in $\cX$.
Let $Q_n, n\in\N$ be a sequence of projections such that $Q_n\to I$ in the strong operator topology.
Set $\cX_n=Q_n \cX$.
\begin{enumerate}
\item There exists $K\in\N$ such that $\dim \cX_n=N$  for $n >K$.
\item Let $\rho^n\in S_+(\cX_n)$ and assume that $\tr \rho^n\le c$ for $n>K$.  Then
there exists a subsequence $\rho^{n_k}$ that converges in trace norm to $\rho\in S_+(\cX)$.
\end{enumerate}
\end{lemma}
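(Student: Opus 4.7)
The plan is to reduce both parts to a finite-dimensional compactness argument, leveraging that $Q_n\bx_i \to \bx_i$ in norm for each fixed $i\in[N]$, which follows directly from $Q_n \to I$ strongly applied to the fixed vector $\bx_i$.

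For part (1), I would examine the Gram matrices $G^n := [\langle Q_n\bx_i, Q_n\bx_j\rangle]_{i,j=1}^N \in \C^{N\times N}$. These converge entry-wise to $G = [\langle \bx_i,\bx_j\rangle] = I_N$, so $\det G^n \to 1$; hence there exists $K\in\N$ with $\det G^n \neq 0$ for all $n > K$, which is equivalent to linear independence of $\{Q_n\bx_1,\ldots,Q_n\bx_N\}$ and therefore to $\dim \cX_n = N$.

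For part (2), my first move would be to apply Gram--Schmidt to the ordered tuple $(Q_n\bx_1,\ldots,Q_n\bx_N)$ for each $n > K$, producing an orthonormal basis $(\be_{1,n},\ldots,\be_{N,n})$ of $\cX_n$. Since Gram--Schmidt is continuous on tuples of linearly independent vectors and leaves the already-orthonormal target $(\bx_1,\ldots,\bx_N)$ fixed, I get $\be_{i,n} \to \bx_i$ in norm for each $i\in[N]$. In this basis, $\rho^n \in S_+(\cX_n)$ has the unique representation
\begin{equation*}
\rho^n = \sum_{i,j=1}^N a^n_{ij}\, \be_{i,n}\be_{j,n}^\vee,
\end{equation*}
with $A^n := [a^n_{ij}] \in \C^{N\times N}$ Hermitian positive semidefinite and $\tr A^n = \tr\rho^n \le c$. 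Thus $\{A^n\}$ is bounded, and Bolzano--Weierstrass furnishes a subsequence $A^{n_k} \to A$ with $A = [a_{ij}]$ Hermitian positive semidefinite; the natural candidate limit is $\rho := \sum_{i,j=1}^N a_{ij}\, \bx_i\bx_j^\vee \in S_+(\cX)$.

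To upgrade matrix convergence to $\|\rho^{n_k}-\rho\|_1 \to 0$, I would use the triangle inequality
\begin{equation*}
\|\rho^{n_k} - \rho\|_1 \le \sum_{i,j=1}^N \|a^{n_k}_{ij}\, \be_{i,n_k}\be_{j,n_k}^\vee - a_{ij}\, \bx_i\bx_j^\vee\|_1
\end{equation*}
and estimate each summand as the trace norm of a difference of rank-one operators. The identity $\|\bu\bv^\vee\|_1 = \|\bu\|\,\|\bv\|$ combined with a routine add-and-subtract argument makes each summand tend to $0$. The only step needing genuine care is the continuity of Gram--Schmidt at the orthonormal tuple $(\bx_1,\ldots,\bx_N)$, which underwrites the entire reduction; once $\be_{i,n} \to \bx_i$ in norm is in hand, the finite-dimensional compactness and the rank-one trace-norm estimate follow in a straightforward manner.
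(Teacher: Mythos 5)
Your proof is correct and follows essentially the same route as the paper: both parts reduce to the Gram matrix $[\langle Q_n\bx_i,Q_n\bx_j\rangle]$ converging to $I_N$, and part (2) to extracting a convergent subsequence from a bounded family of $N\times N$ positive semidefinite matrices and transferring the limit back via rank-one trace-norm estimates. The only (cosmetic) difference is that you orthonormalize the basis of $\cX_n$ by Gram--Schmidt so that $\tr\rho^n$ equals the matrix trace on the nose, whereas the paper works in the non-orthonormal basis $Q_n\bx_i$ and corrects the trace by the inverse Gram matrix $W_n^{-1}$; both devices serve the same purpose of bounding the representing matrices.
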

\begin{proof}
First observe that since $Q_n$ is a projection we have the inequality $\|Q_n\bx_i\|\le 1$ for $i\in [N]$ and $n\in\N$.
As $\lim_{n\to\infty}\|Q_n\bx_i-\bx_i\|=0$ for $i\in[N]$ we deduce that for a given $\varepsilon>0$ there exists $K(\varepsilon)$ such that
\begin{eqnarray*}
1-\varepsilon< \langle Q_n\bx_i,Q_n\bx_i\rangle \le 1, \quad |\langle Q_n\bx_i, Q_n\bx_j\rangle|< \varepsilon \textrm{ for }i,j\in[N] \textrm{ and } i\ne j.
\end{eqnarray*}

Let $W_n=[\langle Q_n\bx_i, Q_n\bx_j\rangle]\in \C^{N\times N}$.  Then $W_n$ is Hermitian.  We claim that $W_n$ is positive definite for $\varepsilon < 1/N$.  More precisely $\sigma_1(W_n-I_N)< N\varepsilon$.  (This follows from Perron-Frobenius theorem, as the absolute value of each entry of $I-W_n$ is less than $\varepsilon$.  See \cite{Frb16}.)
Let $\lambda_1(W_n)\ge \cdots\ge \lambda_N(W_n)$ be the eigenvalues of $W_n$.  As $W_n-I_N$ is Hermitian it follows that $|\lambda_i(W_n-I_N)|\le N\varepsilon$.
\noindent
\noindent

(1) For $K=K(1/N)$, $W$ is positive definite.  Hence $Q_n\bx_1,\ldots,Q_n\bx_N$ are linearly independent for $n>K$.

\noindent

(2)  Assume that $n>K$.  Denote by $W_n^{1/2}$ the unique positive definite matrix which is the square root of $W_n$.  Note that the eigenvalues of $W_n^{1/2}$ satisfy also the inequality $|\lambda_i(W_n^{1/2}-I_N)|<N\varepsilon$.  Hence $\lim_{n\to\infty} W_n^{1/2}=I_N$.  Observe that $L\in B(\cX_n)$ is of the form $\sum_{i=j=1}^N a_{ij}Q_n\bx_i(Q_n\bx_j)^\vee$.  Furthermore $\rho\in S_+(\cX_n)$ if and only if $A=[a_{ij}]\in\C^{N\times N}$ is Hermitian and positive semidefinite.  However, the trace of $L$ is not equal to the trace of $A$ but to the trace of $W_n^{-1/2}AW_n^{-1/2}$ which is $\tr W_n^{-1}A$.  This follows from the observation
 that  $\cX_n$ has an orthonormal basis $(\bx_1,\ldots,\bx_N)W^{1/2}$.  Note that
\[(1-N\varepsilon)I_N\preceq W_n \preceq (1+N\varepsilon) I_N \iff (1+N\varepsilon)^{-1}I_N\preceq W_n \preceq (1-N\varepsilon)^{-1} I_N.\]
Hence for $A\succeq 0$ we get
\[(1+N\varepsilon)^{-1}\tr A\le\tr \rho\le  (1-N\varepsilon)^{-1}\tr A\]
Assume that $\rho^n\in S_+(\cX_n)$ is a sequence whose trace is bounded above.
Let $\rho^n=\sum_{i=j=1}^N a_{ij,n}Q_n\bx_i(Q_n\bx_j)^\vee, n>K$.  Set $A_n=[a_{ij,n}]\in\C^{N\times N}$.  Then $A_n,n>K$ are
 positive semidefinite matrices with bounded traces.  Therefore there exists a subsequence $A_{n_k}$ which converges entrywise to $A=[a_{ij}]$.  Set $\rho=\sum_{i=j=1}^N a_{ij}\bx_i\bx_j^\vee$.  It now follows that $\lim_{k\to\infty} \|\rho^{n_k}-\rho\|_1=0$.
\end{proof}

%gjt begin------------------------------
\begin{lemma}\label{rhonlesrho1n}
Let $\cH_1,\cH_2$ be two separable Hilbert spaces with countable orthogonal bases $\be_{i,1}, \be_{i,2}$ for $i\in \N$ respectively. Set $\cH=\cH_1\otimes\cH_2 $. Assume that $\rho\in S_+(\cH),\rho_i \in S_+(\cH_i) $ are given and $\tr_i\rho=\rho_i,  i\in[2] $. Let $P_{n,i}\in S_+(\cH_i)  $ be the orthogonal projection on $\cH_{i,n}=\spa(\be_{1,i},\dots,\be_{n,i})  $. For $n\in\N, i\in[2] $,  $\rho_{i,n}=P_{n,i}\rho_i P_{n,i} $. Let $\rho^{(n)}=(P_{n,1}\otimes P_{n,2})\rho(P_{n,1}\otimes P_{n,2})$. Then we have  $\tr_2\rho^{(n)}\preceq\rho_{1,n}, \tr_1\rho^{(n)}\preceq\rho_{2,n}$.
\end{lemma}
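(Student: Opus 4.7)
The plan is to reduce the inequality $\tr_2 \rho^{(n)} \preceq \rho_{1,n}$ to two simpler facts: (i) the partial trace commutes with left/right multiplication by operators of the form $A \otimes I$ on the first factor, and (ii) truncating the second tensor factor by a projection can only decrease the resulting partial trace on the first factor. The other inequality will follow by the symmetric argument.

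First I would factor the projection as $P_{n,1}\otimes P_{n,2} = (P_{n,1}\otimes I_2)(I_1 \otimes P_{n,2})$, where $I_i$ is the identity on $\cH_i$. Then
\[
\rho^{(n)} = (P_{n,1}\otimes I_2)\bigl[(I_1\otimes P_{n,2})\rho(I_1\otimes P_{n,2})\bigr](P_{n,1}\otimes I_2).
\]
Using the orthonormal basis $\be_{j,2}, j\in\N$ to define the partial trace $\tr_2$, and the easily verified identity
\[
\tr_2\bigl((A\otimes I_2)\,X\,(B\otimes I_2)\bigr)=A\,(\tr_2 X)\,B,\qquad A,B\in \rB(\cH_1),\; X\in\rT(\cH),
\]
I would obtain $\tr_2\rho^{(n)} = P_{n,1}\,\tr_2\bigl[(I_1\otimes P_{n,2})\rho(I_1\otimes P_{n,2})\bigr]\,P_{n,1}$.

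The key step is to show $\tr_2\bigl[(I_1\otimes P_{n,2})\rho(I_1\otimes P_{n,2})\bigr]\preceq \tr_2\rho = \rho_1$. For any $\bx\in\cH_1$, I would compute directly from the defining formula of the partial trace:
\[
\bigl\langle \tr_2[(I_1\otimes P_{n,2})\rho(I_1\otimes P_{n,2})]\,\bx,\bx\bigr\rangle
=\sum_{j=1}^{\infty}\bigl\langle \rho(\bx\otimes P_{n,2}\be_{j,2}),\,\bx\otimes P_{n,2}\be_{j,2}\bigr\rangle
=\sum_{j=1}^{n}\langle\rho(\bx\otimes\be_{j,2}),\bx\otimes\be_{j,2}\rangle,
\]
while $\langle(\tr_2\rho)\bx,\bx\rangle=\sum_{j=1}^{\infty}\langle\rho(\bx\otimes\be_{j,2}),\bx\otimes\be_{j,2}\rangle$. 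Subtracting gives a sum of nonnegative terms (because $\rho\succeq 0$), which yields the desired inequality.

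Combining the two steps gives $\tr_2\rho^{(n)}\preceq P_{n,1}\rho_1 P_{n,1}=\rho_{1,n}$, since conjugation of a PSD inequality by $P_{n,1}$ preserves the order. The inequality $\tr_1\rho^{(n)}\preceq \rho_{2,n}$ follows by interchanging the roles of $\cH_1$ and $\cH_2$. I expect no genuine obstacle here; the only point requiring care is the bookkeeping of the factorization of $P_{n,1}\otimes P_{n,2}$ and verification of the multiplicative formula for $\tr_2$ under $A\otimes I_2$, which can be checked on rank-one elementary tensors and extended by linearity and continuity in the trace norm via Lemma \ref{tracecont}.
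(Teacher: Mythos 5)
Your proof is correct and rests on the same two facts as the paper's own argument: that $\tr_2$ intertwines conjugation by $P_{n,1}\otimes I_2$ with conjugation by $P_{n,1}$, and that truncating the second factor by $I_1\otimes P_{n,2}$ only discards the nonnegative contributions $\langle\rho(\bx\otimes\be_{j,2}),\bx\otimes\be_{j,2}\rangle$, $j>n$, from the quadratic form of the partial trace. The paper packages both steps into a single block expansion $\rho=\sum_{p,q}\rho_{1,pq}\otimes\be_{p,2}\be_{q,2}^\vee$ and compares $\sum_{p=1}^{n}P_{n,1}\rho_{1,pp}P_{n,1}$ with $\sum_{p=1}^{\infty}P_{n,1}\rho_{1,pp}P_{n,1}=\rho_{1,n}$, so your quadratic-form formulation is an equivalent (and slightly more careful, since it avoids manipulating the infinite block sum termwise) rendering of the same computation.
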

\begin{proof}
Write
\begin{eqnarray*}
&&\rho=\sum_{p,q=1}^{\infty}\rho_{1,pq}\otimes \be_{p,2}\be_{q,2}^\vee=\sum_{i,j=1}^{\infty}\be_{i,1}\be_{j,2}^\vee\otimes\rho_{2,ij},  \\
&&\rho_1=\tr_2\rho=\sum_{p=1}^{\infty}\rho_{1,pp},\quad
\rho_2=\tr_1\rho=\sum_{i=1}^{\infty}\rho_{2,ii}.
\end{eqnarray*}
Where $\rho_{1,pq}$ is in a trace class operator on $\cH_1$ and $\rho_{2,ij}$ is in a trace class operator on $\cH_2$. Then
\begin{eqnarray*}
\tr_2\rho^{(n)}&=&\tr_2\left((P_{n,1}\otimes P_{n,2})(\sum_{p,q=1}^{\infty}\rho_{1,pq}\otimes \be_{p,2}\be_{q,2}^\vee)(P_{n,1}\otimes P_{n,2})\right)\\
&=& \tr_2\left(\sum_{p,q=1}^{n}P_{n,1}\rho_{1,pq}P_{n,1}\otimes \be_{p,2}\be_{q,2}^\vee\right)\\
&=&\sum_{p=1}^{n}P_{n,1}\rho_{1,pp}P_{n,1}\preceq\sum_{p=1}^{\infty}P_{n,1}\rho_{1,pp}P_{n,1}=\rho_{1,n}.
\end{eqnarray*}
Similarly $\tr_1\rho^{(n)}\preceq\rho_{2,n}$.
\end{proof}
%
%With these lemma, we  proved that:
\begin{theorem}\label{H1H2inf}
Let $\cH_1,\cH_2$ be two separable Hilbert spaces with countable orthogonal bases $\be_{i,1}, \be_{i,2}$ for $i\in \N $ respectively. Set $\cH=\cH_1\otimes\cH_2 $. Suppose  $\cX \subset \cH $ is finite  dimensional. Assume that $\rho_i \in \rS_+(\cH_i) $ are given and $\tr\rho_1=\tr\rho_2=1 $. Let $P_{n,i}\in \rS_+(\cH_i)  $ be the orthogonal projection on $\cH_{i,n}=\spa(\be_{1,i},\dots,\be_{n,i})  $. For $n\in\N, i\in[2] $, set $\cX_n=(P_{n,1}\otimes P_{n,2}\cX ) $ and $\rho_{i,n}=P_{n,i}\rho_i P_{n,i} $.

Consider the semidefinite programming  problem
\begin{eqnarray}
\mu_n(\rho_{1},\rho_{2},\cX)=&& \nonumber \\
 max\{ \tr(XP_{\cX_n});&&   \tr_2 X\preceq \rho_{1,n}, \tr_1 X\preceq \rho_{2,n},   \nonumber\\
&&X\in (P_{n,1}\otimes P_{n,2}) \rS_+(\cH)(P_{n,1}\otimes P_{n,2})\} \nonumber
\end{eqnarray}
Then the following statements are equivalent
\begin{enumerate}
\item $\exists \rho\in \rS_{+,1}(\cH_1\otimes\cH_2)$ satisfies
\[ \tr_1(\rho)=\rho_2,\tr_2(\rho)=\rho_1,\supp(\rho)\subset\cX.\]
\item $\lim_{n\to\infty}\mu_n(\rho_{1},\rho_{2},\cX)=1.$
\end{enumerate}
\end{theorem}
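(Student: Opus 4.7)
For the direction $(1)\Rightarrow(2)$, set $Q_n=P_{n,1}\otimes P_{n,2}$ and $\rho^{(n)}=Q_n\rho Q_n$.  Lemma \ref{rhonlesrho1n} shows $\rho^{(n)}$ is feasible for the $n$-th SDP.  Since $\supp\rho\subseteq\cX$ we have $\rho=P_\cX\rho P_\cX$, and $Q_n(\cX)\subseteq\cX_n$ forces $\supp\rho^{(n)}\subseteq\cX_n$, hence $\tr(\rho^{(n)}P_{\cX_n})=\tr\rho^{(n)}$.  By \cite[Lemma 5]{Fri18}, $\|Q_n\rho Q_n-\rho\|_1\to 0$, so $\tr\rho^{(n)}\to 1$.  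Combined with the a~priori bound $\mu_n(\rho_1,\rho_2,\cX)\le\tr\rho_{1,n}\le 1$, this yields $\mu_n(\rho_1,\rho_2,\cX)\to 1$.

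For the converse $(2)\Rightarrow(1)$, let $X_n$ be an optimizer of the $n$-th SDP.  Using $\rho_{1,n}-\tr_2 X_n\succeq 0$ and $\tr X_n\ge\tr(X_n P_{\cX_n})=\mu_n$,
\begin{equation*}
\|\rho_{1,n}-\tr_2 X_n\|_1=\tr\rho_{1,n}-\tr X_n\le\tr\rho_{1,n}-\mu_n\to 0,
\end{equation*}
and combining with $\|\rho_{1,n}-\rho_1\|_1\to 0$ gives $\|\tr_2 X_n-\rho_1\|_1\to 0$; analogously $\|\tr_1 X_n-\rho_2\|_1\to 0$.  Since $\|X_n\|_1\le 1$, extract a subsequence (still denoted $X_n$) with $X_n\overset{w.o.t.}{\to}\rho\in\rS_+(\cH)$ (Lemma \ref{weaktoplemma}).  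Theorem \ref{wotnrmconv} now upgrades this to $\|X_n-\rho\|_1\to 0$ and yields $\tr_2\rho=\rho_1$, $\tr_1\rho=\rho_2$, $\tr\rho=1$.

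It remains to show $\supp\rho\subseteq\cX$, which I expect to be the main obstacle since each $X_n$ is only known a~priori to have support in the larger range of $Q_n$.  Set $Y_n=P_{\cX_n}X_n P_{\cX_n}\in\rS_+(\cX_n)$.  Applying $\|AB\|_1\le\|A\|_2\|B\|_2$ to $A=P_{\cX_n}X_n^{1/2}$ and $B=X_n^{1/2}(I-P_{\cX_n})$, together with $\tr(X_n(I-P_{\cX_n}))=\tr X_n-\mu_n\le\tr\rho_{1,n}-\mu_n\to 0$, yields
\begin{equation*}
\|X_n-Y_n\|_1\le 2\sqrt{\tr X_n\cdot\tr(X_n(I-P_{\cX_n}))}+\tr(X_n(I-P_{\cX_n}))\to 0,
\end{equation*}
so $Y_n\to\rho$ in trace norm.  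By Lemma \ref{convfdsup}(1), $\dim\cX_n=\dim\cX$ for large $n$, and part~(2) of the same lemma produces a further subsequence of $Y_n$ converging in trace norm to some $\rho'\in\rS_+(\cX)$.  Uniqueness of limits forces $\rho=\rho'\in\rS_+(\cX)$, so $\supp\rho\subseteq\cX$, completing the proof.
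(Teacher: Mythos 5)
Your proof is correct, and while the $(1)\Rightarrow(2)$ direction coincides with the paper's argument, your treatment of $(2)\Rightarrow(1)$ diverges in a way that is worth spelling out. The paper takes near-optimizers $\rho^{(n)}$ and asserts $(P_{n,1}\otimes P_{n,2})\,\rS_+(\cH)\,(P_{n,1}\otimes P_{n,2})\subset \rS_+(\cX_n)$ so that Lemma \ref{convfdsup}(2) can be applied directly to $\rho^{(n)}$; but that inclusion is false as written, since the feasible set lives in the full compressed space $\cH_{1,n}\otimes\cH_{2,n}$ of dimension $n^2$, not in the $N$-dimensional $\cX_n$. You correctly identified this as the crux and supplied the missing step: compressing the optimizer to $Y_n=P_{\cX_n}X_nP_{\cX_n}\in\rS_+(\cX_n)$ and showing $\|X_n-Y_n\|_1\to 0$ via the decomposition $X_n-P_{\cX_n}X_nP_{\cX_n}=(I-P_{\cX_n})X_n+P_{\cX_n}X_n(I-P_{\cX_n})$, the Cauchy--Schwarz bound $\|AB\|_1\le\|A\|_2\|B\|_2$, and the near-saturation $\tr\bigl(X_n(I-P_{\cX_n})\bigr)=\tr X_n-\mu_n\to 0$. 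Only then does Lemma \ref{convfdsup}(2) legitimately deliver a trace-norm limit in $\rS_+(\cX)$, and uniqueness of limits gives $\supp\rho\subseteq\cX$. Your detour through Theorem \ref{wotnrmconv} to upgrade weak operator convergence of $X_n$ to trace-norm convergence is valid (both spaces are infinite dimensional here) but slightly redundant: once $\|X_n-Y_n\|_1\to 0$ and a subsequence of $Y_n$ converges in trace norm to $\rho'\in\rS_+(\cX)$, you already have $X_{n_k}\to\rho'$ in $\|\cdot\|_1$, and the identities $\tr_2\rho'=\rho_1$, $\tr_1\rho'=\rho_2$, $\tr\rho'=1$ follow from continuity of the partial traces together with your estimate $\|\tr_2X_n-\rho_1\|_1+\|\tr_1X_n-\rho_2\|_1\to 0$ (the paper instead closes with the order-theoretic argument $\tr_i\rho\preceq\rho_j$ plus equality of traces). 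In short: your route is a genuine repair and streamlining of the published argument rather than a mere restatement of it.
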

\begin{proof}

 $(1) \Rightarrow (2)$ Assume that  there exists an  $ \rho\in \rS_{+,1}(\cH) $  such that  $\tr_2\rho=\rho_1, \tr_1\rho=\rho_2 $, $\supp(\rho)\subset\cX $. Let $\rho^{(n)}=(P_{n,1}\otimes P_{n,2})\rho(P_{n,1}\otimes P_{n,2})$, $\rho^{(n)}\in (P_{n,1}\otimes P_{n,2}) \rS_+(\cH)(P_{n,1}\otimes P_{n,2})$. According to  Lemma \ref{rhonlesrho1n}, we have  $\tr_2\rho^{(n)}\preceq\rho_{1,n}, \tr_1\rho^{(n)}\preceq\rho_{2,n}$. Therefore,  $\rho^{(n)}$ is a feasible solution of the maximal problem. Moreover, since $\supp(\rho)\subset\cX$, we deduce that $\rho^{(n)}(\cH)=(P_{n,1}\otimes P_{n,2})\rho(P_{n,1}\otimes P_{n,2})(\cH)\subset\cX_n$. As $\cX_n$ is closed, and $\supp(\rho^{(n)})$ is the closure of $\rho^{(n)}(\cH)$, we have $\supp(\rho^{(n)})\subset \cX_n $. So we have
 \begin{eqnarray}\label{murho}
\mu_n(\rho_{1},\rho_{2},\cX)\geq\tr(\rho^{(n)}P_{\cX_n})=\tr(\rho^{(n)}).
\end{eqnarray}
 Since $P_{n,1}\otimes P_{n,2}\to I_1\otimes I_2$ in the strong operator topology \cite[Lemma 5]{Fri18}  yields  $\lim_{n\to\infty}\|\rho^{(n)}-\rho\|_1=0$.
 So $\lim_{n\to\infty}\tr\rho^{(n)}=\tr\rho=1$.

 Since  $P_{\cX_n}\preceq I$, and  $X\in S_+(\cH), \tr_2 X\preceq \rho_1$ we obtain
 \begin{eqnarray*}
 \tr XP_{\cX_n}=\tr X^{1/2}X^{1/2}P_{\cX_n}=\tr X^{1/2}P_{\cX_n} X^{1/2}\le\\
 \tr X^{1/2}I X^{1/2}=\tr X=\tr (\tr_2 X)\le \tr\rho_1=1.
 \end{eqnarray*}
 Hence $\tr(\rho^{(n)})=\tr(\rho^{(n)}P_{\cX_n})\le \mu_n(\rho_{1},\rho_{2},\cX)\le 1 $.
 By taking the limit on both side we deduce $\lim_{n\to\infty}\mu_n(\rho_1,\rho_2,\cX)= 1$.

 $(2)\Rightarrow (1)$ Let $\varepsilon_n, n\in\N $ be a positive sequence converging to zero. Suppose that
\[\tr(\rho^{(n)}P_{\cX_n})\geq \mu_n (\rho_{1},\rho_{2},\cX)-\varepsilon_n,   \]
 and $\tr_2 \rho^{(n)}\preceq\rho_{1,n}, \tr_1 \rho^{(n)}\preceq\rho_{2,n}$,and $\rho^{(n)}\in(P_{n,1}\otimes P_{n,2}) S_+(\cH)(P_{n,1}\otimes P_{n,2})\subset S_+(\cX_n)$.
 According to Lemma \ref{convfdsup}(2), there exists $n_k$, such that $\rho^{(n_k)}$ converges in trace norm to $\rho\in S_+(\cX)$.  Lemma \ref{tracecont} yields that $\tr_i\rho^{(n_k)}$ converges to $\tr_i\rho$ in trace norm for $i\in[2]$.
 By taking the limit of the following inequality
 \[\mu_n(\rho_{1},\rho_{2},\cX)-\varepsilon_n\le \tr(P_{\cX_{n_k}}\rho^{(n_k)})\le \tr(\rho^{(n_k)})\le 1.  \]
 We have $\lim_{n\to\infty}\tr(\rho^{(n_k)})=1 $. As $\rho^{(n_k)}$ converges in trace norm to $\rho$, we deduce that $\tr(\rho)=1$.

For each $n_k$, we have
$\tr_i(\rho^{(n_k)})\preceq \rho_{j,n_k}$, where $\{i,j\}=[2]$.  Lemma 5 in \cite{Fri18} yields that $\lim_{k\to\infty} \rho_{j,n_k}=\rho_j$ for $j\in[2]$.  Hence $\tr_i\rho\preceq \rho_j$ for $\{i,j\}=[2]$.   Furthermore, $\tr(\tr_i\rho)=\tr\rho_1=\tr\rho_2=1$.  Hence $\rho_1=\tr_2\rho$ and  $\rho_2=\tr_1\rho$.
\end{proof}
\section{Continuity of the Hausdorff metric}\label{subsec: Hausdorff}
Let $\Phi$ be given by \eqref{defPhiid}.
Lemma \ref{tracecont} yields that $\Phi$ is a bounded linear operator satisfying $\|\Phi\|\le 2$.  Denote $\Sigma=\Phi(\rT_+(\cH_1\otimes\cH_2))$. Note that $(\rho_1,\rho_2)\in \Sigma$ if and only if $\rho_i\in\rT_+(\cH_i)$ and $\tr\rho_1=\tr\rho_2$.
\begin{proposition}\label{Compac}  Assume that $(\rho_1,\rho_2)\in\Sigma$. Then the set $\cM(\rho_1,\rho_2)$ given by \eqref{defcM} is a nonempty, convex, compact, metric set with respect to the distance induced by the norm in $\rT(\cH_1\otimes\cH_2)$. That is for each sequence $\gamma_m\in \cM(\rho_1,\rho_2), m\in\N$ there exists a subsequence $\gamma_{m_k}$ which converges in norm to $\gamma\in \cM(\rho_1,\rho_2)$.
\end{proposition}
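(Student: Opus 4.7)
The plan is to verify nonemptiness and convexity directly from the definition, and then treat the compactness claim by combining a standard weak-operator extraction with Theorem \ref{wotnrmconv} to upgrade the resulting convergence to the trace norm.

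Nonemptiness is immediate: by the definition of $\Sigma$, there is some $\gamma\in\rT_+(\cH_1\otimes\cH_2)$ with $\Phi(\gamma)=(\rho_1,\rho_2)$, and then $\gamma\in\cM(\rho_1,\rho_2)$. Convexity is equally routine: $\rT_+(\cH_1\otimes\cH_2)$ is a convex cone, and $\Phi^{-1}(\rho_1,\rho_2)$ is the preimage of a point under a linear map, hence an affine subspace; the intersection of a convex set and an affine subspace is convex.

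For compactness, let $\gamma_m\in\cM(\rho_1,\rho_2)$. By part (4) of Lemma \ref{tracecont} we have
\[
\|\gamma_m\|_1=\tr\gamma_m=\tr(\tr_2\gamma_m)=\tr\rho_1
\]
for every $m$, so the sequence is uniformly bounded in trace norm. Since $\rT(\cH_1\otimes\cH_2)$ is the Banach dual of $\rK(\cH_1\otimes\cH_2)$ and $\cH_1\otimes\cH_2$ is separable, the Banach--Alaoglu theorem together with the weak-$*$ versus weak-operator comparison recorded in Appendix \ref{app:conv} yields a subsequence $\gamma_{m_k}$ that converges in the weak operator topology to some $\gamma\in\rB(\cH_1\otimes\cH_2)$. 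By Lemma \ref{weaktoplemma}, the weak operator limit of a sequence in $\rT_+(\cH_1\otimes\cH_2)$ is itself in $\rT_+(\cH_1\otimes\cH_2)$, so $\gamma\in\rT_+(\cH_1\otimes\cH_2)$.

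Now every $\gamma_{m_k}$ has $\tr_2\gamma_{m_k}=\rho_1$ and $\tr_1\gamma_{m_k}=\rho_2$ exactly, so the hypothesis \eqref{partrrhoncond} of Theorem \ref{wotnrmconv} is trivially satisfied. Applying Theorem \ref{wotnrmconv} yields $\|\gamma_{m_k}-\gamma\|_1\to 0$, in particular $\tr\gamma=\tr\rho_1=\tr\rho_2$, and by continuity of the partial traces in $\|\cdot\|_1$ (Lemma \ref{tracecont}(2)) we obtain $\Phi(\gamma)=(\rho_1,\rho_2)$. Hence $\gamma\in\cM(\rho_1,\rho_2)$, establishing sequential compactness. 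The main obstacle, invoking Theorem \ref{wotnrmconv}, has been set up so that the hypothesis of that theorem holds vacuously; the only cases requiring separate treatment are those in which one or both $\cH_i$ are finite dimensional, but there $\tr_l$ is weak-operator continuous on bounded sets by \eqref{trleqrho}, so the same argument goes through (in fact more simply), and when both factors are finite dimensional compactness is inherited from the ambient finite-dimensional space.
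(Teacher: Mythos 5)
Your proof is correct and follows essentially the same route as the paper: uniform boundedness in $\|\cdot\|_1$, extraction of a weak-operator convergent subsequence, and an application of Theorem \ref{wotnrmconv} (whose hypothesis \eqref{partrrhoncond} holds vacuously since the marginals are exactly $\rho_1,\rho_2$) to upgrade to trace-norm convergence. The only cosmetic differences are that the paper exhibits the explicit element $\frac{1}{\tr\rho_1}\rho_1\otimes\rho_2$ for nonemptiness rather than citing the definition of $\Sigma$ as the image of $\Phi$, and it extracts the subsequence via Lemma \ref{weaktoplemma} rather than Banach--Alaoglu.
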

\begin{proof}  Clearly  $\cM(0,0)=\{0\}$ and the proposition is trivial in this case.  Assume that $\tr\rho_1=\tr\rho_2>0$.  Then $\frac{1}{\tr \rho_1} \rho_1\otimes \rho_2\in \cM(\rho_1,\rho_2)$.  Clearly $\cM(\rho_1,\rho_2)$ is a convex metric space.  Note that $\|\gamma\|_1=\tr\rho_1$ for each $\gamma \in\cM(\rho_1,\rho_2)$.  Hence $\cM(\rho_1,\rho_2)$ is a bounded set.  Assume that $\gamma_m\in \cM(\rho_1,\rho_2), m\in\N$.  Then there exists a subsequence $\gamma_{m_k}$ which converges in the weak operator topology to $\gamma$.  Clearly $\tr_2\gamma_{m_k}=\rho_1, \tr_1\gamma_{m_k}=\rho_2$.  Theorem \ref{wotnrmconv} yields that $\lim_{m\to\infty}\|\gamma_{m_k}-\gamma\|_1=0$.  Hence $\gamma\in\cM(\rho_1,\rho_2)$.
\end{proof}

Observe that $\rT_+(\cH_1\otimes\cH_2)$ fibers over $\Sigma$:  $\rT_+(\cH_1\otimes\cH_2)=\cup_{(\rho_1,\rho_2)\in \Sigma} \cM(\rho_1,\rho_2)$. We define the distance between two fibers using the Hausdorff metric.  The distance from $\beta\in \rT(\cH_1\otimes\cH_2)$ to $\cM(\rho_1,\rho_2)$ is defined
as
\begin{eqnarray*}
\textrm{dist}(\beta,\cM(\rho_1,\rho_2))=\inf\{\|\beta-\gamma\|_1, \gamma\in\cM(\rho_1,\rho_2)\}.
\end{eqnarray*}
Since $\cM(\rho_1,\rho_2)$ is compact it follows that there exists $\gamma(\beta)\in \cM(\rho_1,\rho_2)$ such that dist$(\beta,\cM(\rho_1,\rho_2))=\|\beta-\gamma(\beta)\|_1$.  Assume that $(\sigma_1,\sigma_2)\in\Sigma$.  Then the semidistance between $\cM(\sigma_1,\sigma_2)$ and $\cM(\rho_1,\rho_2)$ and  is given as
\begin{eqnarray*}
&&\textrm{sd}(\cM(\sigma_1,\sigma_2),\cM(\rho_1,\rho_2))=\\
&&\sup\{\textrm{dist}(\beta,\cM(\rho_1,\rho_2)), \beta\in\cM(\sigma_1,\sigma_2))\}.
\end{eqnarray*}
Since $\cM(\rho_1,\rho_2)$ and $\cM(\sigma_1,\sigma_2)$ are compact it follows
\begin{eqnarray*}
\textrm{sd}(\cM(\sigma_1,\sigma_2),\cM(\rho_1,\rho_2))=\|\beta-\gamma\|_1 \textrm{ for some }\beta\in\cM(\sigma_1,\sigma_2),\gamma\in\cM(\rho_1,\rho_2).
\end{eqnarray*}
Recall that the Hausdorff distance between $\cM(\sigma_1,\sigma_2)$ and $\cM(\rho_1,\rho_2)$ is given by
\begin{eqnarray*}
&&\textrm{hd}(\cM(\sigma_1,\sigma_2),\cM(\rho_1,\rho_2))=\\
&&\max(\textrm{sd}(\cM(\sigma_1,\sigma_2),\cM(\rho_1,\rho_2)),\textrm{sd}(\cM(\rho_1,\rho_2),\cM(\sigma_1,\sigma_2))).
\end{eqnarray*}
\begin{theorem}\label{hmetricfib} The Hausdorff distance on the fibers over $\Sigma$
is a complete metric.  Furthermore the sequence $\cM(\rho_{1,m},\rho_{2,m}), m\in\N$ converges to $\cM(\rho_1,\rho_2)$ in Hausdorff metric if and only if the sequence $(\rho_{1,m},\rho_{2,m}), m\in\N$ converges in norm to $(\rho_1,\rho_2)$.
\end{theorem}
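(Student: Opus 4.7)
The plan is to first handle the easy direction of the characterization and extract completeness from it as a corollary, and then to tackle the nontrivial direction by separating upper and lower semicontinuity. For the easy direction, given $\gamma_m\in\cM(\rho_{1,m},\rho_{2,m})$ and $\tilde\gamma_m\in\cM(\rho_1,\rho_2)$ with $\|\gamma_m-\tilde\gamma_m\|_1$ controlled by the Hausdorff distance, Lemma \ref{tracecont} yields
\begin{equation*}
\|\rho_{i,m}-\rho_i\|_1=\|\tr_{3-i}(\gamma_m-\tilde\gamma_m)\|_1\le\|\gamma_m-\tilde\gamma_m\|_1\to 0.
\end{equation*}
The same contractivity turns any hd-Cauchy sequence of fibers into a norm-Cauchy sequence of base pairs: fix a selection $\gamma_m\in\cM(\rho_{1,m},\rho_{2,m})$ and use the hd-approximants between $\cM(\rho_{1,m},\rho_{2,m})$ and $\cM(\rho_{1,k},\rho_{2,k})$ to compare across indices. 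Completeness of $\rT(\cH_i)$ together with continuity of $\tr$ produces a limit $(\rho_1,\rho_2)\in\Sigma$, and the nontrivial direction then identifies the hd-limit with $\cM(\rho_1,\rho_2)$.

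For the nontrivial direction I would first prove the upper semicontinuity $\textrm{sd}(\cM(\rho_{1,m},\rho_{2,m}),\cM(\rho_1,\rho_2))\to 0$ using Theorem \ref{wotnrmconv}: any $\gamma_m\in\cM(\rho_{1,m},\rho_{2,m})$ has $\|\gamma_m\|_1=\tr\rho_{1,m}$ bounded, so a weak-operator subsequential limit $\gamma\in\rT_+(\cH)$ exists; since $\tr_i\gamma_m=\rho_{3-i,m}\to\rho_{3-i}$ in trace norm, Theorem \ref{wotnrmconv} upgrades WOT convergence to trace-norm convergence and places $\gamma$ in $\cM(\rho_1,\rho_2)$. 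A standard subsequence-of-subsequences argument then gives the claimed semideviation bound.

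The main obstacle is the lower semicontinuity $\textrm{sd}(\cM(\rho_1,\rho_2),\cM(\rho_{1,m},\rho_{2,m}))\to 0$, i.e.\ approximating each $\gamma\in\cM(\rho_1,\rho_2)$ by elements of $\cM(\rho_{1,m},\rho_{2,m})$ in trace norm. A natural affine lift,
\begin{equation*}
\gamma_m=\tfrac{a_m}{a}\gamma+\bigl(\rho_{1,m}-\tfrac{a_m}{a}\rho_1\bigr)\otimes\tfrac{\rho_2}{a}+\tfrac{\rho_1}{a}\otimes\bigl(\rho_{2,m}-\tfrac{a_m}{a}\rho_2\bigr),
\end{equation*}
with $a=\tr\rho_1$ and $a_m=\tr\rho_{1,m}$, has the right marginals (using $\tr(\rho_{i,m}-(a_m/a)\rho_i)=0$) and satisfies $\|\gamma_m-\gamma\|_1\to 0$, but positivity is not automatic because the trace-class spectrum of $\gamma$ accumulates at zero and the indefinite corrections can dominate it in directions where $\gamma$ has very small eigenvalues. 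To restore positivity I plan to pass through finite-rank spectral truncations $\gamma^{(n)}=P_n\gamma P_n\preceq\gamma$ with marginals $\rho_i^{(n)}\preceq\rho_i$, so that the residuals $\rho_i-\rho_i^{(n)}\succeq 0$ furnish a positive reservoir, and to use the candidate
\begin{equation*}
\gamma_m^{(n)}=\gamma^{(n)}+\frac{(\rho_{1,m}-\rho_1^{(n)})\otimes(\rho_{2,m}-\rho_2^{(n)})}{a-a^{(n)}}
\end{equation*}
together with a diagonal choice $m=m(n)\to\infty$. The delicate point is quantifying the failure of positivity of $\rho_{i,m(n)}-\rho_i^{(n)}$ against the scale $a-a^{(n)}$ so as to recover a positive semidefinite coupling that still trace-norm converges to $\gamma$. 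If the direct construction remains obstructed, a fallback is to exploit Proposition \ref{Compac} together with Theorem \ref{wotnrmconv} in a contradiction argument: the compactness of $\cM(\rho_1,\rho_2)$ reduces the lower-semicontinuity claim to a single $\gamma^{\ast}\in\cM(\rho_1,\rho_2)$, and the weak-operator limits of any competing candidates must themselves lie in $\cM(\rho_1,\rho_2)$, which together with the approximation by $\gamma_m^{(n)}$ for a well-chosen diagonal yields the desired trace-norm approximants.
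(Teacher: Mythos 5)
Your overall architecture matches the paper's: the contractivity estimate $\|\sigma_i-\rho_i\|_1\le\|\tr_{3-i}(\beta-\gamma)\|_1\le\|\beta-\gamma\|_1$ from Lemma \ref{tracecont} gives the easy direction and turns an hd-Cauchy sequence of fibers into a norm-Cauchy sequence in $\Sigma$; Theorem \ref{wotnrmconv} plus compactness gives $\textrm{sd}(\cM(\rho_{1,m},\rho_{2,m}),\cM(\rho_1,\rho_2))\to 0$; and the paper, like you, reduces the remaining semidistance to a single $\gamma\in\cM(\rho_1,\rho_2)$ by a contradiction/compactness argument and then builds an explicit element of $\cM(\rho_{1,m},\rho_{2,m})$ by truncating $\gamma$ and patching with a product term. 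All of that part of your proposal is sound.

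The genuine gap is exactly the point you flag and do not close: positivity of the residual marginals $\rho_{i,m}-\rho_i^{(n)}$. Truncating $\gamma$ by its own spectral projections gives $\rho_i^{(n)}\preceq\rho_i$, but $\rho_{i,m}-\rho_i^{(n)}=(\rho_{i,m}-\rho_i)+(\rho_i-\rho_i^{(n)})$ need not be positive semidefinite: the second summand is positive but can have a large kernel (its trace $a-a^{(n)}$ tends to $0$), while the first is only small in trace norm and is indefinite, so no diagonal choice $m=m(n)$ rescues the operator inequality in general. Your fallback paragraph does not supply a mechanism for positivity either --- compactness and Theorem \ref{wotnrmconv} can identify limits but cannot manufacture an element of $\cM(\rho_{1,m},\rho_{2,m})$. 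The paper's resolution is to compress $\gamma$ by $P_{n_1,1,m}\otimes P_{n_2,2,m}$, where $P_{n,j,m}$ are spectral projections of the \emph{approximating} marginals $\rho_{j,m}$; then Lemma \ref{rhonlesrho1n} gives $\tr_{j'}\gamma_{n_1,n_2,m}\preceq P_{n_j,j,m}\,(\tr_{j'}\gamma)\,P_{n_j,j,m}$ and the commutation of $P_{n_j,j,m}$ with $\rho_{j,m}$ yields domination by $\rho_{j,m}$ itself, so the correction term $(\rho_{1,m}-\tr_2\gamma_{n_1,n_2,m})\otimes(\rho_{2,m}-\tr_1\gamma_{n_1,n_2,m})/\tr(\rho_{1,m}-\tr_2\gamma_{n_1,n_2,m})$ is genuinely positive; the trace-norm convergence $\|P_{n,j,m}-P_{n,j}\|_1\to 0$ of these projections (a spectral perturbation fact needing eigenvalue separation and phase choices, handled via \eqref{convprojrhojm}) is what makes the resulting $\sigma_{n_1,n_2,m}$ converge to $\gamma$. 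Two smaller points: your correction term should be normalized by $a_m-a^{(n)}=\tr(\rho_{1,m}-\rho_1^{(n)})$ rather than $a-a^{(n)}$, otherwise the marginals of $\gamma_m^{(n)}$ are not exactly $\rho_{i,m}$; and the affine lift you write down first, besides failing positivity, is not needed once the truncate-and-patch construction is in place.
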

\begin{proof} Since each $\cM(\rho_1,\rho_2)$ is compact it follows that
\begin{eqnarray*}
&&\textrm{hd}(\cM(\sigma_1,\sigma_2),\cM(\rho_1,\rho_2))=0\iff\cM(\sigma_1,\sigma_2)=\cM(\rho_1,\rho_2)\iff\\
&&(\sigma_1,\sigma_2)=(\rho_1,\rho_2).
\end{eqnarray*}
As  $\cM(\sigma_1,\sigma_2)$ and $\cM(\rho_1,\rho_2)$ are compact there exist
$\beta\in\cM(\sigma_1,\sigma_2)$ and $\gamma\in\cM(\rho_1,\rho_2)$ such that $\textrm{hd}(\cM(\sigma_1,\sigma_2),\cM(\rho_1,\rho_2))=\|\beta-\gamma\|_1$.
Lemma \ref{tracecont} yields that $\|\sigma_1-\rho_1\|_1+\|\sigma_2-\rho_2\|_1\le 2 \textrm{hd}(\cM(\sigma_1,\sigma_2),\cM(\rho_1,\rho_2))$.

Assume that the sequence $\cM(\rho_{1,m},\rho_{2,m}), m\in\N$ is a Cauchy sequence in the Hausdorff metric.  Hence the sequence $(\rho_{1,m},\rho_{2,, m})$, $m\in \N$ is a Cauchy sequence in $\Sigma$.   Therefore there exists $(\rho_1,\rho_2)\in\Sigma$ such that $\lim_{m\to\infty}\|\rho_{1,m}-\rho_1\|_1+\|\rho_{2,m}-\rho_2\|_1=0$.

We now show that the sequence $\cM(\rho_{1,m},\rho_{2,m}), m\in\N$ converges to $\cM(\rho_1,\rho_2)$ in the Hausdorff metric.
Since the sequence $(\rho_{1,m},\rho_{2,m})$ is bounded, and each $\cM(\rho_{1,m},\rho_{2,m})$ is compact, it is straightforward to show using Theorem \ref{wotnrmconv} that the sequence
$\textrm{sd}(\cM(\rho_{1,m},\rho_{2,m}),\cM(\rho_1,\rho_2))$ converges to zero.
It is left to show that
\begin{eqnarray*}
\textrm{sd}(\cM(\rho_1,\rho_2),\cM(\rho_{1,m},\rho_{2,m}))=\textrm{dist}(\gamma_m,\cM(\rho_{1,m},\rho_{2,m}))\to 0, \quad \gamma_m\in\cM(\rho_1,\rho_2).
\end{eqnarray*}
Assume to the contrary that the above condition does not hold.  Then there exists $\delta>0$ and a subsequence $\{m_k\}, k\in\N$ such that $\textrm{dist}(\gamma_{m_k},\cM(\rho_{1,m_k},\rho_{2,m_k}))\ge 2\delta$.
As $\cM(\rho_1,\rho_2)$ is compact there exists a subsequence $\{m_{k_l}\}, l\in\N$  and $\gamma\in \cM(\rho_1,\rho_2)$ such that $\lim_{l\to\infty}\|\gamma_{m_{k_l}}-\gamma\|_1=0$.
Hence we can assume that
\[\textrm{dist}(\gamma,\cM(\rho_{1,m_{k_l}},\rho_{2,m_{k_l}}))\ge \delta\]
 for all $l\in\N$.
Without a loss of generality we assume that $m_{k_l}=l$ for $l\in\N$.
We will contradict this statement.

Firstly, we assume that $\rho_{j,m},\rho_j\succ 0$ and all their eigenvalues are simple.
Then there exists orthonormal bases $\{\be_{n,j,m}\},\{\be_{n,j}\},n\in\N$ of $\cH_j$ such that
\begin{eqnarray*}
&&\rho_{j,m}=\sum_{i_j=1}^{\infty} \lambda_{i_j,j,m}\be_{i_j,j,m}\otimes \be_{i_j,j,m}^\vee,\,\lambda_{i_j,j,m}> \lambda_{i_j+1,j,m}>0,  \\
&&\rho_j=\sum_{i_j=1}^{\infty} \lambda_{i_j,j}\be_{i_j,j}\otimes \be_{i_j,j}^\vee, \,\lambda_{i_j,j}> \lambda_{i_j+1,j}>0.
\end{eqnarray*}
Let $P_{n,j,m}$ and  $P_{n,j}$ be the orthogonal projections of $\cH_j$ on \[\cH_{n,j,m}=\rm{span}(\be_{1,j,m},\ldots,\be_{n,j,m}) ~\text{and}~
\cH_{n,j}=\rm{span}(\be_{1,j},\ldots,\be_{n,j,m})\]
 respectively.  Define $\rho_{j,m}^{(n)}=P_{n,j,m}\rho_{j,m} P_{n,j,m}, \rho_{j}^{(n)}=P_{n,j,m}\rho_{j} P_{n,j,m}$.  As \[\lim_{m\to\infty}\|\rho_{j,m}-\rho_j\|_1=0,\] it follows that  $|\lambda_{i_j,j,m}-\lambda_{i_j,j}|\to 0, \|\be_{i_j,j,m}-\be_{i_j,j}\|\to 0, i_j\to\infty$, after we choose the phases (signs) of $\be_{i_j,i,m}$(\cite{FGZ19} Lemma B.6).  Hence for each $n\in\N$
\begin{eqnarray}\label{convprojrhojm}
&&\lim_{m\to\infty} \|P_{n,j,m}-P_{n,j}\|_1=0,\\
&&\lim_{m\to\infty}\|\rho_{j,m}^{(n)}-\rho_{j}^{(n)}\|_1=0 \Rightarrow \lim \|(\rho_{j,m}- \rho_{j,m}^{(n)})-(\rho_j-\rho_{j}^{(n)})\|_1=0.\notag
\end{eqnarray}

Let $\gamma_{n_1,n_2,m}=P_{n_1,1,m}\otimes P_{n_2,2,m}\gamma P_{n_1,1,m}\otimes P_{n_2,2,m}$ and $\gamma_{n_1,n_2}=P_{n_1,1}\otimes P_{n_2,2}\gamma P_{n_1,1}\otimes P_{n_2,2}$.  Then 
$\lim_{m\to\infty} \|\gamma_{n_1,n_2,m}-\gamma_{n_1,n_2}\|_1=0$, and $\lim_{n_1,n_2\to\infty} \|\gamma-  \gamma_{n_1,n_2}\|_1=0$.

The arguments of the proof of Lemma \ref{rhonlesrho1n} yields that $\tr_2\gamma_{n_1,n_2,m}\preceq \rho_{1,m}^{(n_1)}$, and $\tr_1\gamma_{n_1,n_2,m}\preceq \rho_{2,m}^{(n_2)}$.  Hence $\rho_{j,m}-\tr_{j'}\gamma_{n_1,n_2,m}\succeq 0$, where $\{j,j'\}=[2]$.  Define
\begin{eqnarray*}
\sigma_{n_1,n_2,m}=\gamma_{n_1,n_2,m}+\frac{1}{\tr (\rho_{1,m}-\tr_2\gamma_{n_1,n_2,m})}(\rho_{1,m}-\tr_2\gamma_{n_1,n_2,m})\otimes (\rho_{2,m}-\tr_1\gamma_{n_1,n_2,m}).
\end{eqnarray*}
Then $\sigma_{n_1,n_2,m}\in \cM(\rho_{1,m},\rho_{2,m})$ and
\begin{eqnarray*}
&&\|\sigma_{n_1,n_2,m}-\gamma\|_1\le \|\gamma_{n_1,n_2,m}-\gamma_{n_1,n_2}\|_1+\|\gamma_{n_1,n_2}-\gamma\|_1+\|\rho_{1,m}-\rho_1\|_1+\\
&&\|\rho_1-\tr_2\gamma_{n_1,n_2}\|_1+\|\tr_2\gamma_{n_1,n_2}-\tr_2\gamma_{n_1,n_2,m}\|_1.
\end{eqnarray*}
Recall that (Lemma \ref{tracecont})
\begin{eqnarray*}
\|\rho_1-\tr_2\gamma_{n_1,n_2}\|_1\le \|\gamma-\gamma_{n_1,n_2}\|_1, \,
\|\tr_2\gamma_{n_1,n_2}-\tr_2\gamma_{n_1,n_2,m}\|_1\le \|\gamma_{n_1,n_2}-\gamma_{n_1,n_2,m}\|_1.
\end{eqnarray*}
Use \eqref{convprojrhojm} a choice of $n_1,n_2\gg 1$ and corresponding $m\gg 1$ such that
\begin{eqnarray*}
\textrm{dist}(\gamma, \cM(\rho_{1,m},\rho_{2,m}))\le \|\gamma-\sigma_{n_1,n_2,m}\|_1<\delta.
\end{eqnarray*}
This inequality contradicts our assumption and proves that $\cM(\rho_{1,m},\rho_{2,m})$ converges to $\cM(\rho_1,\rho_2)$ in the Hausdorff metric.

We discuss briefly how to modify the above arguments to general $(\rho_{1,m},\rho_{2,m})$ and $(\rho_1,\rho_2)$.  For $\rho_1,\rho_2$ with simple eigenvalues we don't need to modify anything as $\lambda_{n_j,j,m}> \lambda_{n_j+1,j,m}$ for fixed $n_j$ and $m> N_j(n_j)$.  Let us consider now the case where $\rho_1$ and $\rho_2$ are positive definite but may have multiple eigenvalues.  Each eigenvalue must have a finite multiplicity.  Suppose that $\lambda_{n_j,j}>\lambda_{n_j+1,j}$.  Then  $\lambda_{n_j,j,m}>\lambda_{n_j+1,j,m}$
 for $m>N_j(n_j)$.  As $P_{n_j,j}$ is well defined it follows that \eqref{convprojrhojm} holds.

Denote by $\cH'_j$ the closure of the range of $\rho_j$.  It is straightforward to show using Lemma \ref{tr12ab} that $\cM(\rho_1,\rho_2)\subset \rT_+(\cH_1'\otimes\cH_2')$.
Then $P_{n,j}$ are the corresponding projections in $\cH_j'$.  Then for $\lambda_{n_j,j}>\lambda_{n_j+1,j}$, \eqref{convprojrhojm} holds.

Finally, the last part of the theorem that the sequence $\cM(\rho_{1,m},\rho_{2,m}), m\in\N$ converges to $\cM(\rho_1,\rho_2)$ in Hausdorff metric  if the sequence $(\rho_{1,m},\rho_{2,m}), m\in\N$ converges in norm to $(\rho_1,\rho_2)$ follows straightforward from the above arguments.
\end{proof}

%gjt end-------------------------------------
\emph{Acknowledgment:}  The authors thank Professor Luigi Accardi for  helpful comments and suggestions. Dr. Ludovico Lami for comments.
Professor Mingsheng Ying for the reference \cite{Ming}. L. Zhi would like to thank Yuan Feng for the reference \cite{Hsu17} and Ke Ye for pointing out the Banach-Saks theorem in \cite{BanachSaks}.  S. Friedland acknowledges the support of KLMM during his visits to Academy of Mathematics and Systems Science, and Simons collaboration grant for mathematicians. L. Zhi is  supported by the National Key Research Project of China 2018YFA0306702 and the National Natural Science Foundation of China 11571350.

\bibliographystyle{IEEEtran}

\bibliography{gjt}{}

%WS

%
\appendix
\section{Inequalities for singular values of $L\in\rK(\cH)$}\label{appedixA}
In this Appendix we bring some well known inequalities for singular values of $L\in\rK(\cH)$ that we need in this paper, which do not appear explicitly in \cite{RS98}.
It is well known that positive semidefinite compact operator can be treated essentially as positive semidefinite Hermitian matrices, and more general, compact operators can be treated essentially as matrices. Hence we can extend the known inequalities for the eigenvalues of positive semidefinite hermitian matrices and the singular values of matrices as in \cite[Chapter 4]{Frb16} to the eigenvalues  $L\in \rS_+(\cH)\cap\rK(\cH)$ and singular values of $L\in \rK(\cH)$.  In this Appendix we assume that $\cH$ is an infinite dimension separable Hilbert space.
We start with the following known characterization \cite[Lemma 5]{Fri00}:
\begin{lemma}\label{convoy}  Suppose that $L\in \rS_{+}(\cH)\cap\rK(\cH)$.  Let $\bV$ be an $n$-dimensional subspace of $\cH$ with an orthonormal basis  $\bv_1,\ldots,\bv_n$.  Denote by $L(\bV)$ the $n\times n$ hermitian matrix $[\langle L\bv_i,\bv_j\rangle]_{i,j\in[n]}$.  Then $\sigma_i(L(\bV))\le \sigma_i(L)$ for $i\in[n]$ and these inequalities are sharp.
\end{lemma}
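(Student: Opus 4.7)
The plan is to derive the inequality from the Courant--Fischer (min--max) characterization of eigenvalues for the positive compact operator $L$, after identifying $L(\bV)$ as the matrix representation of the compression $P_\bV L P_\bV|_\bV$ in the chosen orthonormal basis.

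First, I would record the min--max formula for a positive compact operator: if $L\in\rS_+(\cH)\cap\rK(\cH)$ has eigenvalues $\sigma_1(L)\ge\sigma_2(L)\ge\cdots\ge 0$ with corresponding orthonormal eigenvectors $\bg_1,\bg_2,\ldots$, then
\[
\sigma_i(L)=\max_{\substack{\bW\subset\cH\\ \dim\bW=i}}\ \min_{\substack{\bx\in\bW\\ \|\bx\|=1}}\langle L\bx,\bx\rangle,\qquad i\in\N.
\]
This is the standard Courant--Fischer statement; it follows from the spectral decomposition $L=\sum_k\sigma_k(L)\bg_k\bg_k^\vee$ exactly as in the matrix case, once one notes that the upper bound $\sigma_i(L)$ is realized by $\bW=\spa(\bg_1,\ldots,\bg_i)$ and the lower bound is obtained by intersecting any $i$-dimensional $\bW$ with $\spa(\bg_i,\bg_{i+1},\ldots)^{\perp\!\!\!}\!{}$-style complements.

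Next I would translate between $L(\bV)$ and $L$. For $\bc=(c_1,\ldots,c_n)^\top\in\C^n$, setting $\bx=\sum_{j=1}^n c_j\bv_j\in\bV$ gives $\|\bx\|=\|\bc\|$ and $\bc^\ast L(\bV)\bc=\langle L\bx,\bx\rangle$, so the map $\bc\mapsto\bx$ is a unitary isomorphism $\C^n\to\bV$ that intertwines the sesquilinear forms of $L(\bV)$ and $L|_\bV$. Applying the finite-dimensional Courant--Fischer identity to the Hermitian positive semidefinite matrix $L(\bV)$ yields
\[
\sigma_i(L(\bV))=\max_{\substack{\bW\subset\bV\\ \dim\bW=i}}\ \min_{\substack{\bx\in\bW\\ \|\bx\|=1}}\langle L\bx,\bx\rangle,\qquad i\in[n].
\]
Because every $i$-dimensional subspace of $\bV$ is, in particular, an $i$-dimensional subspace of $\cH$, the maximum on the right is taken over a subfamily of the family appearing in the formula for $\sigma_i(L)$. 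Hence $\sigma_i(L(\bV))\le\sigma_i(L)$ for all $i\in[n]$.

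Finally, for sharpness I would take $\bv_i=\bg_i$ for $i\in[n]$, where $\bg_1,\ldots,\bg_n$ are the leading orthonormal eigenvectors from the spectral decomposition of $L$. Then $\langle L\bv_i,\bv_j\rangle=\sigma_i(L)\delta_{ij}$, so $L(\bV)=\diag(\sigma_1(L),\ldots,\sigma_n(L))$ and equality $\sigma_i(L(\bV))=\sigma_i(L)$ holds for every $i\in[n]$. There is no serious obstacle here; the only point that requires a modicum of care is verifying the Courant--Fischer formula in the compact-operator setting, but this is a direct transcription of the matrix proof using the explicit spectral expansion of $L$ and the fact that $\sigma_k(L)\to 0$.
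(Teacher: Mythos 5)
Your proof is correct. The paper itself states Lemma \ref{convoy} without proof, citing it as a known result, so there is no in-paper argument to compare against; but your route --- identify $L(\bV)$ with the compression $P_{\bV}LP_{\bV}|_{\bV}$ and apply the Courant--Fischer max--min characterization, noting that $i$-dimensional subspaces of $\bV$ form a subfamily of those of $\cH$ --- is exactly the standard extension of the matrix interlacing argument that the appendix's preamble says it has in mind, and your sharpness example ($\bv_i=\bg_i$) is the right one. The only cosmetic point is that $\sum_{i,j}c_i\bar c_j\langle L\bv_i,\bv_j\rangle$ is the quadratic form of $L(\bV)^{\top}=\overline{L(\bV)}$ rather than of $L(\bV)$ itself, which is harmless since a Hermitian matrix and its conjugate have the same eigenvalues.
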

We now show the inequalities \eqref{SVDinprod}, where $L\in\rK(\cH),A\in\rB(\cH)$.  Let $M=AL$.  Then $M^\vee M\in \rS_+(\cH)\cap \rK(\cH)$.  Assume that
$\bv_1,\ldots,\bv_n$ are orthonormal eigenvectors of $M^\vee M$ corresponding to the the first largest eigenvalues $\sigma_1^2(M)\ge \cdots\ge \sigma_n^2(M)$.  Let $\bV=$span$(\bv_1,\ldots,\bv_n)$. The Rayleigh principle states:
\begin{eqnarray*}
\sigma_n^2(M)=\min\{\|M\bx\|^2, \bx\in\bV, \|\bx\|=1\}.
\end{eqnarray*}
Next observe that $\|M\bx\|\le \|A\|\|L\bx\|$.  Hence
\begin{eqnarray*}
&&\sigma_n^2(M)\le \|A\|^2\min\{\|L\bx\|^2, \bx\in\bV, \|\bx\|=1\}=\\
&&\|A\|^2\sigma_n((L^\vee L)(\bV))\le  \|A\|^2\sigma_n(L^\vee L)=\|A\|^2\sigma_n^2(L).
\end{eqnarray*}
As $\sigma_n(AL)=\sigma_n(LA)$ we deduce \eqref{SVDinprod}.

Lemma \ref{convoy} yields the following well known convergence result:
 \begin{lemma}\label{approxsv}  Let $L_n,n\in\N$ and  $L$  in $\rK(\cH)$ and assume that $\lim_{n\to\infty}\|L_n-L\|=0$.  Then $\lim_{n\to\infty} \sigma_i(L_n)=\sigma_i(L)$ for each $i\in\N$.
\end{lemma}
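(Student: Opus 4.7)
The plan is to reduce the claim to the $1$-Lipschitz estimate
\[
|\sigma_i(L_n) - \sigma_i(L)| \le \|L_n - L\|, \qquad i \in \N,
\]
from which $\sigma_i(L_n) \to \sigma_i(L)$ is immediate since $\|L_n - L\| \to 0$. The key ingredient is a Courant--Fischer min--max characterization of singular values for compact operators, namely
\[
\sigma_i(L) = \max_{\substack{\bV \subseteq \cH \\ \dim \bV = i}}\; \min_{\substack{\bx \in \bV \\ \|\bx\| = 1}} \|L\bx\|,
\]
which I would deduce from Lemma \ref{convoy}. Since $\|L\bx\|^2 = \langle L^\vee L \bx, \bx\rangle$ and the nonzero eigenvalues of $L^\vee L \in \rS_+(\cH) \cap \rK(\cH)$ are exactly $\{\sigma_i(L)^2\}$, the formula is equivalent to the ordinary Rayleigh min--max for the compact positive operator $L^\vee L$. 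For an $i$-dimensional subspace $\bV$ with orthonormal basis $\bv_1,\ldots,\bv_i$, Lemma \ref{convoy} applied to $T = L^\vee L$ gives
$\sigma_i(T(\bV)) \le \sigma_i(T) = \sigma_i(L)^2$,
which is precisely the ``$\le$'' direction of the min--max (since the smallest eigenvalue of $T(\bV)$ equals the minimum of $\langle T\bx,\bx\rangle$ over unit $\bx \in \bV$). The matching ``$\ge$'' direction is the sharpness clause of Lemma \ref{convoy}: one takes $\bV$ to be the span of the first $i$ right singular vectors of $L$ from the Schmidt decomposition.

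With the min--max in hand, the Lipschitz estimate is standard. For every unit vector $\bx$ one has
\[
\bigl|\|L_n \bx\| - \|L \bx\|\bigr| \le \|(L_n - L)\bx\| \le \|L_n - L\|.
\]
Fix an $i$-dimensional subspace $\bV$; minimizing over unit $\bx \in \bV$ gives
\[
\min_{\bx \in \bV,\,\|\bx\|=1}\|L_n \bx\| \;\ge\; \min_{\bx \in \bV,\,\|\bx\|=1}\|L \bx\| \;-\; \|L_n - L\|.
\]
Taking the maximum over all $i$-dimensional $\bV$ and invoking the min--max twice yields $\sigma_i(L_n) \ge \sigma_i(L) - \|L_n - L\|$; interchanging $L$ and $L_n$ gives the reverse, so $|\sigma_i(L_n) - \sigma_i(L)| \le \|L_n - L\| \to 0$.

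There is no real obstacle here: the argument is a direct transcription of the matrix proof of the Weyl perturbation inequality, and Lemma \ref{convoy} has already absorbed the only nontrivial point, namely that the compression bound $\sigma_i(T(\bV)) \le \sigma_i(T)$ survives on infinite-dimensional separable $\cH$ for $T \in \rS_+(\cH) \cap \rK(\cH)$. One minor technical remark: it is not essential whether the outer max is actually attained, since only the one-sided estimate above is needed, and the sup/inf versions of the min--max suffice.
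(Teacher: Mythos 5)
Your proof is correct, and it is essentially the argument the paper intends but leaves implicit: the paper's entire ``proof'' is the parenthetical hint to use $\lim_{n\to\infty}\|L_nL_n^\vee-LL^\vee\|=0$, i.e.\ to pass to the positive compact operators $L_nL_n^\vee$ (whose eigenvalues are $\sigma_i(L_n)^2$) and invoke continuity of their eigenvalues, which again rests on the min--max consequence of Lemma \ref{convoy}. You instead run the Courant--Fischer argument directly on $\|L\bx\|$ rather than on $\langle L^\vee L\bx,\bx\rangle$, which buys two small things: you never need to estimate $\|L_nL_n^\vee-LL^\vee\|\le\|L_n-L\|(\|L_n\|+\|L\|)$, and you get the clean Weyl bound $|\sigma_i(L_n)-\sigma_i(L)|\le\|L_n-L\|$ with Lipschitz constant $1$, which is stronger than the mere convergence the lemma asserts. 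Both directions of your max--min characterization are justified correctly from Lemma \ref{convoy} (the upper bound from the compression inequality applied to $L^\vee L$, the lower bound from the span of the first $i$ right singular vectors), and your closing remark that attainment of the outer maximum is irrelevant is accurate. No gaps.
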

(Use $\lim_{n\to\infty} \|L_n L_n^\vee-L L^\vee\|=0$.)

The next lemma is also well known for matrices \cite{RCT77}, and we need it in the proof of Theorem \ref{wotnrmconv}. (See also \cite[Proof of 1.9 Proposition]{Con91}.)
\begin{lemma}\label{tracein}  Let $L\in \rK(\cH)$.  Assume that $\bx_1,\ldots,\bx_n$ and $\by_1,\ldots,\by_n$ are two orthonormal sets of vectors in $\cH$.  Then one has a sharp inequality:
\begin{eqnarray*}
\sum_{i=1}^n |\langle L\bx_i,\by_i\rangle|\le \sum_{i=1}^n \sigma_i(L)
\end{eqnarray*}
for each $n\in\N$.  In particular, if $L\in\rT(\cH)$ and $\{\bx_i\}, \{\by_i\}, i\in\N$ are two orthonormal sequence in $\cH$ then one has sharp inequality
\begin{eqnarray*}
\sum_{i=1}^{\infty} |\langle L\bx_i,\by_i\rangle|\le \|L\|_1.
\end{eqnarray*}
\end{lemma}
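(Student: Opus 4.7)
The plan is to rewrite the sum as the trace of a finite rank operator, then bound that trace by a sum of singular values via the inequalities already recorded in \eqref{SVDinprod}.

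First, I would normalize the phases. For each $i\in[n]$ pick a unimodular $\alpha_i\in\C$ with $\alpha_i\langle L\bx_i,\by_i\rangle=|\langle L\bx_i,\by_i\rangle|$; replacing $\by_i$ by $\bar\alpha_i\by_i$ (still an orthonormal set, using the antilinearity of the inner product in the second slot) I may assume each $\langle L\bx_i,\by_i\rangle$ is nonnegative, so the left hand side equals $\sum_{i=1}^n\langle L\bx_i,\by_i\rangle$. Define the partial isometry $V\in\rB(\cH)$ by $V\bx_i=\by_i$ for $i\in[n]$ and $V\equiv 0$ on the orthogonal complement of $\spa(\bx_1,\ldots,\bx_n)$, so $\|V\|=\|V^\vee\|\le 1$ and $V^\vee\by_i=\bx_i$. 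Let $P=\sum_{i=1}^n\bx_i\bx_i^\vee$ be the orthogonal projection onto $\spa(\bx_1,\ldots,\bx_n)$. Then $V^\vee L P$ has rank at most $n$, hence is trace class, and computing the trace in any orthonormal basis of $\cH$ that extends $\bx_1,\ldots,\bx_n$ yields
\[\tr(V^\vee L P)=\sum_{i=1}^n\langle V^\vee L\bx_i,\bx_i\rangle=\sum_{i=1}^n\langle L\bx_i,V\bx_i\rangle=\sum_{i=1}^n\langle L\bx_i,\by_i\rangle.\]

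Next I would combine the bound $|\tr A|\le\|A\|_1$ (stated just after \eqref{traceform}) with two applications of \eqref{SVDinprod}:
\[\sigma_k(V^\vee L P)\le\|V^\vee\|\,\sigma_k(L P)\le\|V^\vee\|\,\|P\|\,\sigma_k(L)\le\sigma_k(L),\quad k\in\N.\]
Since $V^\vee L P$ has rank at most $n$, at most the first $n$ singular values are nonzero, so
\[\sum_{i=1}^n|\langle L\bx_i,\by_i\rangle|=\tr(V^\vee L P)\le\|V^\vee L P\|_1=\sum_{k=1}^n\sigma_k(V^\vee L P)\le\sum_{k=1}^n\sigma_k(L).\]
Sharpness is witnessed by taking $\bx_i=\bbf_i$ and $\by_i=\bg_i$ from the Schmidt decomposition \eqref{SVD} of $L$, which gives $\langle L\bbf_i,\bg_i\rangle=\sigma_i(L)$.

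For the second assertion, when $L\in\rT(\cH)$ and the orthonormal sequences are infinite, I would apply the finite inequality for each $n$ and let $n\to\infty$, using $\|L\|_1=\sum_{k=1}^\infty\sigma_k(L)$; sharpness follows again from the Schmidt decomposition. I do not expect a serious obstacle: the only subtleties are the phase normalization (so that the absolute values become a genuine trace) and the observation that the rank bound on $V^\vee L P$ truncates the singular value sum at $n$.
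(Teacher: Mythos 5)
Your proof is correct, but it takes a genuinely different route from the paper's. The paper writes the polar decomposition $L=U|L|=U|L|^{1/2}|L|^{1/2}$, estimates each term $|\langle L\bx_i,\by_i\rangle|=|\langle |L|^{1/2}\bx_i,|L|^{1/2}U^\vee\by_i\rangle|$ by Cauchy--Schwarz, applies Cauchy--Schwarz again to the resulting sums, and then bounds $\sum_{i=1}^n\langle |L|\bx_i,\bx_i\rangle$ by $\sum_{i=1}^n\sigma_i(L)$ via the compression Lemma \ref{convoy}; no phase adjustment is needed because the absolute values are handled termwise. You instead absorb the phases into the $\by_i$, compress $L$ by the partial isometry $V$ and the projection $P$ so that the sum becomes $\tr(V^\vee LP)$, and then combine $|\tr A|\le\|A\|_1$ with the singular value inequalities \eqref{SVDinprod} and the rank bound $\rank(V^\vee LP)\le n$ to truncate the singular value sum at $n$. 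Your argument is self-contained given \eqref{SVDinprod} and the trace-norm bound on the trace of a finite rank operator (both available earlier in the paper, and non-circular since for finite rank operators these facts are elementary), and it bypasses Lemma \ref{convoy} entirely; the paper's argument avoids the phase normalization and the auxiliary isometry, at the cost of the square-root factorization and a second Cauchy--Schwarz. Both are standard proofs of this Ky Fan type inequality, and your treatment of sharpness and of the passage to $n\to\infty$ matches the paper's.
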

\begin{proof}
Assume the SVD decomposition \ref{SVD}.  Let $U\in \rB(\cH)$ a contraction  satisfying $U\bbf_i=\bg_i$ for $i\in\N$.  (We assume that $U\bx=0$ if $\langle \bx,\bbf_i\rangle =0$ for $i\in\N$.)  Then $L=U|L|=U|L|^{1/2} L^{1/2}$.
Hence
\begin{eqnarray*}
&&|\langle L\bx_i,\by_i\rangle|=|\langle |L|^{1/2}\bx_i, |L|^{1/2}U^{\vee}\by_i\rangle|\le \||L|^{1/2}\bx_i|\ \||L|^{1/2}U^{\vee}\by_i\|, \;i\in[n]\Rightarrow\\
&&\sum_{i=1}^n |\langle L\bx_i,\by_i\rangle|\le\big(\sum_{i=1}^n \||L|^{1/2}\bx_i\|^2\big)^{1/2}\big(\sum_{i=1}^n \||L|^{1/2}U^\vee\by_i\|^2\big)^{1/2}.
\end{eqnarray*}
Let $\bV=$span$(\bx_1,\ldots,\bx_n)$.  Use Lemma \ref{convoy} to deduce
\begin{eqnarray*}
\sum_{i=1}^n \| |L|^{1/2}\bx_i\|^2=\sum_{i=1}^n \langle |L|\bx_i,\bx_i\rangle=\tr |L|(\bV)\le \sum_{i=1}^n \sigma_i(|L|)=\sum_{i=1}^n \sigma_i(L).
\end{eqnarray*}
Similarly $\sum_{i=1}^n \| |L|^{1/2}U^\vee\by_i\|^2\le\sum_{i=1}^n \sigma_i(L)$.
This proves the first inequality of the lemma.  By letting $\bx_i=\bbf_i, \by_i=\bg_i$ for $i\in[n]$ we obtain equality in the first inequality of the lemma.  The second inequality and its sharpness follows straightforward from the first inequality and its sharpness.
\end{proof}

Assume that $L\in \rS(\cH)\cap \rK(\cH)$.  Then in SVD decomposition \eqref{SVD}we have that $\bbf_i=\varepsilon_i\bg_i$, where $\varepsilon_i=\pm 1$ for $i\in\N$.
Hence $\varepsilon_i\sigma_i(L)$ is an eigenvalue of $L$ with the corresponding eigenvector $\bg_i$. Suppose furthermore that $L\in\rT(\cH)$.  Then
$\tr L=\sum_{i=1}^\infty \varepsilon_i\sigma_i(L)$ is the sum of the eigenvalues of $L$.
Equality \eqref{traceform} yields
\begin{eqnarray*}
|\tr L|=|\sum_{i=1}^\infty \sigma_i(L)\langle \bg_i, \bbf_i\rangle|\le \sum_{i=1}^\infty \sigma_i(L)|\langle \bg_i, \bbf_i\rangle|\le \sum_{i=1}^\infty \sigma_i(L)=\|L\|_1.
\end{eqnarray*}
Here we used Cauchy-Schwarz inequality $|\langle \bg_i, \bbf_i\rangle|\le \| \bg_i\| \|\bbf_i\|=1$.  Thus equality $|\tr L|=\|L\|_1$ holds if and only if there exists $z\in\C,|z|=1$ such that $z\langle \bg_i, \bbf_i\rangle= \| \bg_i\| \|\bbf_i\|=1$ for each $i$ satisfying $\sigma_i(L)>0$. That is $z\bg_i=\bbf_i$ if $\sigma_i(L)>0$.  Thus $|\tr L|=\|L\|_1$ if and only if $zL\in\rT_+(\cH)$.

We now prove the well known lemma that we used in Section \ref{sec:prilHs}. The proof of the first inequality can also be found in \cite[Theorem 2.3]{CAM67} or  \cite[Proof of 1.11 Theorem]{Con91}, and other parts in \cite[Section I of Chapter I]{Con91}.
\begin{lemma}\label{prodHSop}  Let $L,M\in\rT_2(\cH)$.  Then $L M\in \rT(\cH)$, and
\begin{eqnarray*}
\|LM\|_1\le \|L\|_2\|M\|_2, \; \tr L M=\langle L, M^\vee\rangle, \; \|L L^\vee\|_1=\tr L L^\vee=\|L\|_2^2.
\end{eqnarray*}
\end{lemma}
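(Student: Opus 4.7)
\textbf{Proof plan for Lemma \ref{prodHSop}.}

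The plan is to prove the three claims in order: first the trace-norm bound $\|LM\|_1\le \|L\|_2\|M\|_2$, then the trace formula $\tr(LM)=\langle L,M^\vee\rangle$, and finally the special case $M=L^\vee$. The main tool throughout is a double application of Cauchy--Schwarz together with Lemma~\ref{tracein}.

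\emph{Step 1 (compactness and singular-value bound).} Since $L,M\in\rT_2(\cH)\subset \rK(\cH)$ and $\rK(\cH)$ is a two-sided ideal in $\rB(\cH)$, the product $LM$ lies in $\rK(\cH)$ and therefore admits a Schmidt decomposition $LM=\sum_{i} \sigma_i(LM)\,\bx_i\by_i^\vee$ for some orthonormal systems $\{\bx_i\},\{\by_i\}$. I will bound the partial sums $\sum_{i=1}^n \sigma_i(LM)$ uniformly in $n$. For any orthonormal sequences $\{\bu_i\}$ and $\{\bv_i\}$, I have
\begin{eqnarray*}
\sum_{i=1}^n |\langle LM\bu_i,\bv_i\rangle|
=\sum_{i=1}^n |\langle M\bu_i, L^\vee \bv_i\rangle|
\le \sum_{i=1}^n \|M\bu_i\|\,\|L^\vee \bv_i\|,
\end{eqnarray*}
by Cauchy--Schwarz in $\cH$. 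A second Cauchy--Schwarz in $\ell^2$ gives
\begin{eqnarray*}
\sum_{i=1}^n |\langle LM\bu_i,\bv_i\rangle|
\le \Bigl(\sum_{i=1}^n \|M\bu_i\|^2\Bigr)^{1/2}\Bigl(\sum_{i=1}^n \|L^\vee \bv_i\|^2\Bigr)^{1/2}
\le \|M\|_2\,\|L^\vee\|_2=\|M\|_2\,\|L\|_2,
\end{eqnarray*}
since $L$ and $L^\vee$ share singular values, so $\|L^\vee\|_2=\|L\|_2$. Specializing to $\bu_i=\bx_i$ and $\bv_i=\by_i$ yields $\sum_{i=1}^n \sigma_i(LM)\le \|L\|_2\|M\|_2$, and letting $n\to\infty$ shows $LM\in \rT(\cH)$ with $\|LM\|_1\le \|L\|_2\|M\|_2$.

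\emph{Step 2 (trace formula).} Fix an orthonormal basis $\{\be_k\}$ of $\cH$ and let $L_{kl}=\langle L\be_l,\be_k\rangle$, $M_{kl}=\langle M\be_l,\be_k\rangle$ be the matrix entries. A direct calculation yields $(M^\vee)_{kl}=\overline{M_{lk}}$, so the Hilbert--Schmidt inner product equals
\begin{eqnarray*}
\langle L, M^\vee\rangle=\sum_{k,l} L_{kl}\overline{(M^\vee)_{kl}}=\sum_{k,l} L_{kl}M_{lk}.
\end{eqnarray*}
On the other hand, since $LM\in\rT(\cH)$ by Step~1, its trace is basis-independent and
\begin{eqnarray*}
\tr(LM)=\sum_k \langle LM\be_k,\be_k\rangle=\sum_k\sum_l L_{kl}M_{lk},
\end{eqnarray*}
where Fubini is justified by absolute convergence via the bound in Step~1. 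Equality of the two iterated sums gives $\tr(LM)=\langle L,M^\vee\rangle$.

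\emph{Step 3 (the case $M=L^\vee$).} Applying Step~2 with $M=L^\vee$ and using $(L^\vee)^\vee=L$ yields $\tr(LL^\vee)=\langle L,L\rangle=\|L\|_2^2$. Since $LL^\vee\in \rS_+(\cH)$, the identification $\|A\|_1=\tr A$ for $A\in \rT_+(\cH)$ (established earlier from the Schmidt decomposition) gives $\|LL^\vee\|_1=\tr(LL^\vee)=\|L\|_2^2$. The potential obstacle is justifying the rearrangement of the double sum in Step~2, but the absolute summability $\sum_{k,l}|L_{kl}M_{lk}|\le \|L\|_2\|M\|_2$ (Cauchy--Schwarz in $\ell^2$ on the index $l$ followed by summation over $k$) makes this routine.
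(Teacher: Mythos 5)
Your proof is correct, but it takes a genuinely different route from the paper's. For the bound $\|LM\|_1\le\|L\|_2\|M\|_2$ you use the classical double Cauchy--Schwarz argument: write $\langle LM\bu_i,\bv_i\rangle=\langle M\bu_i,L^\vee\bv_i\rangle$, bound the sum over arbitrary orthonormal systems by $\|M\|_2\|L\|_2$, and specialize to the singular vectors of $LM$; the trace identity is then a direct matrix-entry computation with Fubini justified by the absolute summability $\sum_{k,l}|L_{kl}M_{lk}|\le\|L\|_2\|M\|_2$. The paper instead truncates the Schmidt decompositions to finite-rank operators $L_n,M_n$, represents them as matrices, invokes the singular-value majorization $\sum_{i=1}^l\sigma_i(L_nM_n)\le\sum_{i=1}^l\sigma_i(L_n)\sigma_i(M_n)$ from \cite[Corollary 5.4.8]{Frb16} together with Lemma \ref{approxsv}, and obtains both the norm bound and the trace identity by passing to the limit. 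Your argument is shorter and more self-contained (it needs neither the matrix majorization result nor the approximation machinery, and is essentially the route of \cite[Theorem 2.3]{CAM67} and \cite[1.11 Theorem]{Con91}, which the paper itself cites as an alternative); the paper's route yields the finer intermediate inequality $\sum_{i=1}^l\sigma_i(LM)\le\sum_{i=1}^l\sigma_i(L)\sigma_i(M)$ for every $l$, which is strictly more information than its $\ell^2$ consequence. One cosmetic slip: with $LM=\sum_i\sigma_i(LM)\bx_i\by_i^\vee$ one has $\sigma_i(LM)=\langle LM\by_i,\bx_i\rangle$, so the specialization at the end of Step~1 should be $\bu_i=\by_i$, $\bv_i=\bx_i$ rather than the reverse; since your displayed bound holds for all orthonormal pairs, this does not affect the argument.
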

\begin{proof}
Assume that $L$ has decomposition \eqref{SVD}.    Then $\sigma_i(L L^\vee)=\sigma_i(L)^2$ for $i\in\N$.  Clearly $L L^\vee\in \rT_+(\cH)$ and $\|L L^\vee\|_1=\tr L L^\vee=\langle L,L\rangle$.

For $n\in \N$ let $L_n=\sum_{i=1}^n \sigma_i(L)\bg_i\bbf_i^\vee$.  Then $\|L_n-L\|=\sigma_{n+1}(L)$.  Furthermore $\sigma_i(L_n)=\sigma_i(L)$ for $i\in[n]$ and $\sigma_i(L_n)=0$ for $i>n$.  Similarly, one defines $M_n$ a finite rank operator so that $\|M_n-M\|=\sigma_{n+1}(M)$, and $\sigma_i(M_n)=\sigma_i(M)$ for $i\in[n]$ and $\sigma_i(M_n)=0$ for $i>n$.  Now $L_n$ and $M_n$ can be represented as matrices $A_n,B_n\in\C^{N_n\times N_n}$, such that $L_nM_n$ is represented by the matrix $A_nB_n$.  (We can assume that $N_n=4n$.)  Then $\sigma_i(A_n)=\sigma_i(L_n), \sigma_i(B_n)=\sigma_i(L_n)$ for $n\in[N_n]$.  For a fixed  $l\in[n]$  \cite[Corollary 5.4.8]{Frb16} yields:
\begin{eqnarray*}
\sum_{i=1}^l \sigma_i(L_nM_n)\le \sum_{i=1}^l \sigma_i(L_n)\sigma_i(M_n)=\sum_{i=1}^l \sigma_i(L)\sigma_i(M).
\end{eqnarray*}
As $\lim_{n\to\infty} \|L_n-L\|+\|M_n-M\|=0$ one deduces that $\lim_{n\to\infty}\|L_n M_n-LM\|=0$.  Lemma \ref{approxsv} yields $\sum_{i=1}^l\sigma_i(LM)\le \sum_{i=1}^l \sigma_i(L)\sigma_i(M)$.  The Cauchy-Schwarz inequality implies
\begin{eqnarray*}
\sum_{i=1}^l \sigma_i(L)\sigma_i(M)\le \big(\sum_{i=1}^l\sigma_i^2(L)\big)^{1/2}\big(\sum_{i=1}^l\sigma_i^2(M)\big)^{1/2}\le \|L\|_2\|M\|_2.
\end{eqnarray*}
Hence $\|LM\|_1\le \|L\|_2\|M\|_2$.

It is left to show the equality $\tr LM=\langle L,M^\vee\rangle$.  Again, as for matrices we easily deduce that  $\tr L_nM_n=\langle L_n,M_n^\vee\rangle$.  Observe next
\begin{eqnarray*}
&&\|L_n-L\|_2^2=\sum_{i=n+1}^{\infty} \sigma_i^2(L), \quad \|M_n-M\|_2^2=\sum_{i=n+1}^{\infty} \sigma_i^2(M) \Rightarrow\\
&&\lim_{n\to\infty} \|L_n-L\|_2+\|M_n-M\|_2=0.
\end{eqnarray*}
Then
\begin{eqnarray*}
&&|\tr (L_n M_n-LM)|=|\tr((L_n-L)M_n+L(M_n-M))|\le \\
&&|\tr((L_n-L)M_n| +|\tr L(M_n-M)|\le \|(L_n-L)M_n\|_1 +\\
&&\|L(M_n-M)\|_1\le\|L_n-L\|_2\|M_n\|_2+\|L\|_2\|M_n-M\|_2\le\\
&&\|L_n-L\|_2\|M\|_2+\|L\|_2\|M_n-M\|_2\Rightarrow \lim_{n\to\infty} \tr L_n M_n=\tr LM.
\end{eqnarray*}
SImilarly
\begin{eqnarray*}
&&|\langle L_n,M_n^\vee\rangle -\langle L,M^\vee\rangle |=|\langle L_n-L,M_n^\vee\rangle +\langle L,M_n-M^\vee\rangle|\le\\
&&|\langle L_n-L,M_n^\vee\rangle| +|\langle L,M_n-M^\vee\rangle|\le\|L_n-L\|_2\|M_n\|_2+\\
&&\|L\|_2\|M_n-M_2\|_2\Rightarrow \lim_{n\to\infty} \langle L_n,M_n^\vee\rangle=\langle L,M^\vee\rangle.
\end{eqnarray*}
As $\tr L_nM_n=\langle L_n,M_n^\vee\rangle$ for $n\in\N$ we deduce the equality  $\tr LM=\langle L,M^\vee\rangle$.
\end{proof}
\section{Convergence in various topologies in $\rT_p(\cH)$}\label{app:conv}
 Let $\cB$ be a Banach space over $\F\in\{\C,\R\}$ with the norm $\|\cdot\|$.  Denote by $\cB^\vee$  the dual Banach space of bounded linear functionals.  Recall that if $\cB=\cH$, then $\cH^\vee$ is identified with $\cH$: Namely, each bounded linear functional on $\cH$ is $\by^\vee$, where $\by\in\cH$.  Namely, $\by^\vee(\bx)=\langle \bx,\by\rangle$.
 Denote by $\rB(\cB)$ the space of bounded linear transformatons of $\cB$ to itself.

 Recall the following convergence notions in $\cB$ and $\rB(\cB)$:
 Let $\{\bx_n\}\subset \cB$ and $\{T_n\}\subset \rB(\cB)$ be given.
 \begin{enumerate}
 \item Convergence in norm: $\bx_n\to\bx$ and $T_n\to T$ if $\lim_{n\to\infty}\|\bx_n-\bx\|=0$ and $\lim_{n\to\infty} ||T_n -T\|=0$ respectively.
 \item Weak convergence : $\bx_n\overset{w}\to\bx$ if for each $\bbf\in\cB^\vee$, the equality $\lim_{n\to\infty} \bbf(\bx_n) = \bbf(\bx)$ holds.
 \item Assume that $\cB=\cB_1^\vee$ for some Banach space $\cB_1$.  Then $\bx_n\overset{w^*}{\to}\bx$ if for each $\by\in\rB_1$ the equality $\lim_{n\to\infty} \bx_n(\by)=\bx(\by)$.
 \item Pointwise convergence: $T_n\overset{pw}{\to} T$ if for each $\bx\in \cB$ the equality $\lim_{n\to\infty} \|T_n\bx-T\bx\|=0$ holds.
 \item Weak operator convergence $T_{n}\overset{w.o.t.}{\to}T$ if for each $\bx\in\cB,\bbf\in \cB^\vee$ equality $\lim_{n\to\infty}\bbf(T_n(\bx))=\bbf(T(\bx))$ holds.
  \end{enumerate}

It is a consequence of Banach-Steinhaus theorem on uniform boundedness, e.g., \cite[Sec. 19, 20, 22, 28]{Sem}, that all the above convergences yield that the sequences $\{\bx_n\}, \{T_n\}, n\in \N$ are uniformly bounded.

In this section we assume that $\cH$ be an infinite dimensional separable space.
Assume that $\be_i, i\in\N$ is an orthonormal basis in $\cH$.  The following lemma is well known,   and we bring the proof of part (3), (which is less known, see   \cite[Lemma 3.1]{CAM67} and \cite[Lemma 19.2]{PBEB15})  for completeness:
\begin{lemma}\label{wnormconvH}
Let $\bx_n\in\cH, n\in\N$.  Then
\begin{enumerate}
\item The sequence $\{\bx_n\},n\in\N$ converges weakly to $\bx\in\cH$ if and only if
\begin{eqnarray*}\label{wconvcond}
\{\|\bx_n\|\},n\in\N \textrm{ is bounded, and } \lim_{n\to\infty} \langle \bx_n,\be_i\rangle=
\langle \bx,\be_i\rangle \textrm{ for }i\in\N.
\end{eqnarray*}
\item  Let $\{\bx_n\},n\in\N$ be a bounded sequence.  There exists a subsequence $\{x_{n_k}\},k\in\N$ which converges weakly to some $\bx\in\cH$.
\item  Suppose that $\bx_n\overset{w}{\to}\bx$.  Then $\liminf \|\bx_n\|\ge \|\bx\|$.  Furthermore,
$\lim_{n\to\infty}\|\bx_n-\bx\|=0$ if and only if $\lim_{n\to\infty} \|\bx_n\|=\|\bx\|$.
\end{enumerate}
\end{lemma}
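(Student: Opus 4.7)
The plan is to prove the three parts of Lemma \ref{wnormconvH} in order, leveraging the fact that $\cH$ has a countable orthonormal basis $\{\be_i\}_{i\in\N}$ and that every bounded linear functional on $\cH$ is of the form $\by^\vee(\cdot)=\langle\cdot,\by\rangle$ for a unique $\by\in\cH$.

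For part (1), the forward direction combines Banach--Steinhaus (yielding $\sup_n\|\bx_n\|<\infty$) with the observation that each $\be_i^\vee$ is a particular bounded functional. For the converse, given a bounded sequence $\{\bx_n\}$ with $\langle\bx_n,\be_i\rangle\to\langle\bx,\be_i\rangle$ for every $i$, I would test against an arbitrary $\by^\vee\in\cH^\vee$: expand $\by=\sum_i b_i\be_i$, split $\langle\bx_n,\by\rangle=\sum_i\langle\bx_n,\be_i\rangle\bar b_i$ at a cutoff $N$, and use Cauchy--Schwarz on the tail together with the uniform bound $C=\sup_n\|\bx_n\|$ and $\sum_{i>N}|b_i|^2\to 0$ to make the tail small uniformly in $n$, while the head is a finite sum converging by hypothesis. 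For part (2), a standard Cantor diagonal procedure does the job: each sequence $\langle\bx_n,\be_i\rangle$ is bounded by $C$, so by Bolzano--Weierstrass plus diagonalization there is a subsequence $\{\bx_{n_k}\}$ along which $a_i:=\lim_k\langle\bx_{n_k},\be_i\rangle$ exists for every $i$. Parseval's identity applied to the finite truncations and the uniform bound force $\sum_{i=1}^{N}|a_i|^2\le C^2$ for all $N$, so $(a_i)\in\ell^2$ and $\bx=\sum_i a_i\be_i$ is a bona fide element of $\cH$; part (1) then gives $\bx_{n_k}\overset{w}{\to}\bx$.

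Part (3) is the focal point since the authors flag it as less standard. For the lower-semicontinuity of the norm I would apply weak convergence to the specific functional $\bx^\vee$, obtaining $\|\bx\|^2=\langle\bx,\bx\rangle=\lim_n\langle\bx_n,\bx\rangle$; since $|\langle\bx_n,\bx\rangle|\le\|\bx_n\|\|\bx\|$ by Cauchy--Schwarz, dividing by $\|\bx\|$ (or noting the assertion trivially holds when $\bx=\0$) gives $\|\bx\|\le\liminf_n\|\bx_n\|$. For the equivalence, the ``only if'' direction is the reverse triangle inequality $|\|\bx_n\|-\|\bx\||\le\|\bx_n-\bx\|$. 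The ``if'' direction follows by expanding
\[
\|\bx_n-\bx\|^2=\|\bx_n\|^2-\langle\bx_n,\bx\rangle-\overline{\langle\bx_n,\bx\rangle}+\|\bx\|^2,
\]
and observing that under weak convergence $\langle\bx_n,\bx\rangle\to\|\bx\|^2$ while by hypothesis $\|\bx_n\|^2\to\|\bx\|^2$, so the right-hand side converges to $2\|\bx\|^2-2\|\bx\|^2=0$.

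No essential obstacle arises: every step is a staple of Hilbert-space analysis. The only mildly delicate point is the tail estimate in the converse of part (1), which is precisely where the uniform bound on $\|\bx_n\|$ (automatic from Banach--Steinhaus in the other direction, but an assumption that must be kept explicit here) is used; and in part (3) the key conceptual ingredient is that the Hilbert-space norm comes from an inner product, so that $\|\bx_n-\bx\|^2$ expands into quantities that all collapse to $\|\bx\|^2$ in the limit.
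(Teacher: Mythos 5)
Your proposal is correct in all three parts, but your argument for part (3) — the only part the paper actually proves, the authors dismissing (1) and (2) as standard — takes a genuinely different route. The paper works entirely in coordinates: for lower semicontinuity it uses $\|\bx_n\|\ge\|\bx_{n,M}\|$ for the truncations $\bx_{n,M}=\sum_{i=1}^M x_{i,n}\be_i$, passes to the limit in $n$ for fixed $M$, and then lets $M\to\infty$; for the equivalence it runs a fairly long $\varepsilon$-chase, choosing $M(\varepsilon)$ so that $\|\bx_M-\bx\|$ is small, then $N_1(\varepsilon)$ so that $\|\bx_{n,M}-\bx_M\|$ is small, and extracting $\|\bx_n-\bx_{n,M}\|\le\varepsilon\|\bx\|$ from the Pythagorean identity $\|\bx_n\|^2=\|\bx_{n,M}\|^2+\|\bx_n-\bx_{n,M}\|^2$ before assembling the triangle inequality. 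You instead test weak convergence against the single functional $\bx^\vee$ to get $\langle\bx_n,\bx\rangle\to\|\bx\|^2$, which gives lower semicontinuity via Cauchy--Schwarz in one line, and then expand $\|\bx_n-\bx\|^2=\|\bx_n\|^2-2\Re\langle\bx_n,\bx\rangle+\|\bx\|^2\to\|\bx\|^2-2\|\bx\|^2+\|\bx\|^2=0$. This is the classical Radon--Riesz argument; it is shorter, basis-free, and isolates the one place where the Hilbert-space structure (norm induced by an inner product) is actually used, whereas the paper's truncation argument is more pedestrian but stays within the same coordinate formalism used throughout its appendices. One cosmetic remark: in part (2) the bound $\sum_{i=1}^N|a_i|^2\le C^2$ comes from Bessel's inequality applied to $\bx_{n_k}$ rather than Parseval on the truncations, but the estimate is of course valid.
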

\begin{proof}
(3) Assume that $\bx_n\overset{w}{\to}\bx$.  Suppose that $M\in\N$ is fixed and let
\begin{eqnarray*}
\bx_n=\sum_{i=1}^\infty x_{i,n}\be_i, \;\bx_{n,M}=\sum_{i=1}^M x_{i,n}\be_i,\;
\bx=\sum_{i=1}^\infty x_{i}\be_i, \;\bx_{M}=\sum_{i=1}^M x_{i}\be_i.
\end{eqnarray*}
Recall that $\lim_{n\to\infty} x_{i,n}=x_i $ for $i\in\N$.
Clearly, $\|\bx_n\|\ge \|\bx_{n,M}\|$.  Hence $\liminf_{n\to\infty}\|\bx_n\|\ge \|\bx_M\|$.  As $M\in\N$ was arbitrary we deduce that

\noindent
$\liminf_{n\to\infty}\|\bx_n\|\ge\|\bx\|$.

Assume in addition that $\lim_{n\to\infty}\|\bx_n\|=\|\bx\|$.
If $\bx=0$ we immediately deduce that $\bx_n\to \0$.
 Assume that $\|\bx\|>0$. Fix $\varepsilon \in (0,1)$.  Then there exists $N(\varepsilon)$ such that $\|\bx_n\|^2< (1+\varepsilon^2/2)\|\bx\|^2$ for $n>N(\varepsilon)$.   Furthermore, there exists $M(\varepsilon)\in\N$ such that for $M>M(\varepsilon)$ $\|\bx_M\|> (1-\varepsilon^2/8)\|\bx\|$.  Hence $\|\bx_M\|^2> (1-\varepsilon^2/4)\|\bx\|^2$, and
 \begin{eqnarray*}
 \|\bx_M-\bx\|^2=\|\bx\|^2-\|\bx_M\|^2<(\varepsilon^2/4)\|\bx\|^2\Rightarrow\|\bx_M-\bx\|<(\varepsilon/2)\|\bx\|.
 \end{eqnarray*}
 Fix $M>M(\varepsilon)$.  Then there exists $N_1(\varepsilon)$ such that for $n>N_1(\varepsilon)$ the inequality $\|\bx_{n,M}-\bx_M\|<(\varepsilon^2/8)\|\bx\|$ holds.
Thus for $n>\max(N(\varepsilon),N_1(\varepsilon))$ we obtain:
\begin{eqnarray*}
(\varepsilon^2/8)\|\bx\|\ge \|\bx_{n,M}-\bx_M\|\ge \|\bx_{M}\|-\|\bx_{n,M}\|\ge (1-\varepsilon^2/8)\|\bx\|-\|\bx_{n,M}\|\Rightarrow\\
 \end{eqnarray*}
Hence
\begin{eqnarray*}
(1+\varepsilon^2/2)\|\bx\|^2\ge \|\bx_n\|^2=\|\bx_{n.M}\|^2 +\|\bx_n-\bx_{n,M}\|^2\ge\\ (1-\varepsilon^2/2)\|\bx\|^2+\|\bx_n-\bx_{n,M}\|^2\Rightarrow \varepsilon^2\|\bx\|^2\ge \|\bx_n-\bx_{n,M}\|^2
\end{eqnarray*}
 Thus,  for $n>\max(N(\varepsilon),N_1(\varepsilon))$ we showed:
 \begin{eqnarray*}
 \|\bx_n -\bx\|\le \|\bx_{n,M}-\bx_{M}\|+\|(\bx_n-\bx_{n,M})-(\bx-\bx_{M})\|\le \\
 \|\bx_{n,M}-\bx_{M}\|+\|(\bx_n-\bx_{n,M})\|+\|(\bx-\bx_{M})\|\le\\
 (\varepsilon^2/8)\|\bx\| +\varepsilon\|\bx\| +(\varepsilon/2)\|\bx\|<2\varepsilon\|\bx\|.
 \end{eqnarray*}
 That is, $\lim_{n\to\infty}\|\bx_n-\bx\|=0$.

 Vice versa, assume that $\lim_{n\to\infty} \|\bx_n-\bx\|=0$.
 Use triangle inequality to deduce that $\lim_{n\to\infty}\|\bx_n\|=\|\bx\|$.
 \end{proof}

We recall the Banach-Saks theorem \cite{BanachSaks}
\begin{theorem}\label{banachsaks}  Suppose that sequence $\{\bx_n\},n\in\N$ converges weakly to $\bx\in\cH$
Then there exists a subsequence $\bx_{n_j},j\in\N$ such that the sequence of arithmetic means of this subsequence converges strongly to $\bx$, i.e.,
\begin{eqnarray*}
\lim_{m\to\infty}\|\frac{1}{m}\sum_{j=1}^{m}\bx_{n_j}-\bx\|.
\end{eqnarray*}
\end{theorem}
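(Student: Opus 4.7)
The plan is to prove Banach–Saks by reducing to the case $\bx=\mathbf{0}$ and then extracting a subsequence whose pairwise inner products are so small that the squared norm of the partial sums grows only linearly in $m$. First I would note that by replacing $\bx_n$ with $\bx_n-\bx$, which still converges weakly (to $\mathbf{0}$), it suffices to prove the statement for $\bx=\mathbf{0}$. Since $\bx_n \overset{w}{\to} \mathbf{0}$, the Banach--Steinhaus principle guarantees that the sequence is uniformly bounded, so choose $M$ with $\|\bx_n\|\le M$ for all $n$.

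Next I would construct the subsequence $\{\bx_{n_j}\}$ inductively. Set $n_1=1$. Having chosen $n_1<n_2<\cdots<n_k$, observe that for each fixed $j\le k$ the functional $\by\mapsto \langle \by,\bx_{n_j}\rangle$ is bounded, so weak convergence of $\bx_n$ to $\mathbf{0}$ yields $\lim_{n\to\infty}\langle \bx_n,\bx_{n_j}\rangle=0$. Thus one can pick $n_{k+1}>n_k$ with
\begin{equation*}
|\langle \bx_{n_{k+1}},\bx_{n_j}\rangle|<\frac{1}{2^k}\quad\text{for every }j\le k.
\end{equation*}
This is the heart of the argument, but it is routine rather than an obstacle.

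Then I would estimate the norm of the partial sum directly. Expanding,
\begin{equation*}
\Big\|\sum_{j=1}^m \bx_{n_j}\Big\|^2 = \sum_{j=1}^m \|\bx_{n_j}\|^2 + 2\,\mathrm{Re}\sum_{1\le j<k\le m}\langle \bx_{n_j},\bx_{n_k}\rangle.
\end{equation*}
The first sum is at most $mM^2$. For the cross terms, the construction gives
\begin{equation*}
\sum_{1\le j<k\le m}|\langle \bx_{n_j},\bx_{n_k}\rangle| \le \sum_{k=2}^m (k-1)\cdot 2^{-(k-1)} \le C
\end{equation*}
for an absolute constant $C$. Hence $\|\sum_{j=1}^m \bx_{n_j}\|^2 \le mM^2 + 2C$, which after dividing by $m^2$ gives
\begin{equation*}
\Big\|\tfrac{1}{m}\sum_{j=1}^m \bx_{n_j}\Big\|^2 \le \frac{M^2}{m}+\frac{2C}{m^2}\xrightarrow[m\to\infty]{}0.
\end{equation*}
Returning to the general case by adding $\bx$ back, the Cesàro means of the chosen subsequence converge strongly to $\bx$, completing the proof. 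The only mildly delicate point is ensuring summability of the bounds on the inner products during the inductive selection; any geometrically (or even suitably polynomially) decaying tolerance works, and there is no conceptual obstacle beyond this bookkeeping.
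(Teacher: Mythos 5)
Your proof is correct. The paper does not actually prove this statement: it is recorded as the classical Banach--Saks theorem and simply cited to Banach and Saks (1930), so there is no in-paper argument to compare against. What you give is the standard Hilbert-space proof of that classical result: reduce to $\bx=\0$, use uniform boundedness from Banach--Steinhaus, inductively extract a subsequence with $|\langle \bx_{n_{k+1}},\bx_{n_j}\rangle|<2^{-k}$ for $j\le k$, and expand $\bigl\|\sum_{j=1}^m \bx_{n_j}\bigr\|^2$ so that the diagonal contributes $O(m)$ and the off-diagonal terms are summable (your constant $C$ can be taken to be $2$, since $\sum_{k\ge 2}(k-1)2^{-(k-1)}=2$). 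Every step checks out, including the bookkeeping on which tolerance governs the pair $(j,k)$. The only cosmetic remark is that the statement as printed in the paper is missing the ``$=0$'' in the displayed limit; your proof supplies the intended conclusion.
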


We now discuss various topologies on $\rT(\cH)$.  First recall \cite[Theorem VI.26]{RS98} that $\rT(\cH)=(\rK(\cH))^\vee$ and $\rB(\cH)=(\rT(\cH))^\vee$, where
\begin{eqnarray*}
A\mapsto \tr A \rho, \;A\in \rK(\cH), \rho\in \rT(\cH),\\
\rho\mapsto \tr \rho B,\;  \rho\in \rT(\cH), B\in \rB(\cH),
\end{eqnarray*}
are the corresponding linear operators on $\rK(\cH)$ and $\rT(\cH)$.

We next observe the well known result \cite[2.5 Proposition]{Con91} and outline its proof
\begin{lemma}\label{w*=wot}

\noindent
\begin{enumerate}
\item
On the bounded subsets of $\rT(\cH)$ the $w^*$ topology is the weak operator topology.
\item
On the bounded subsets of $\rT_2(\cH)$ the weak operator topology is the weak topology.
\end{enumerate}
\end{lemma}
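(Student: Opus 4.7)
\noindent The plan is to prove both parts by the same density argument. On a rank-one operator $A=\bx\by^\vee$ the trace pairing reduces to a matrix element, $\tr(A\rho)=\tr(\by^\vee\rho\bx)=\langle\rho\bx,\by\rangle$, so in both parts the weak operator topology coincides with the topology of convergence of $\tr(A\rho_n)$ for every \emph{finite-rank} test operator $A$. The task is therefore to upgrade from finite rank to the full test space, using density of finite-rank operators in $\rK(\cH)$ with respect to $\|\cdot\|$ and in $\rT_2(\cH)$ with respect to $\|\cdot\|_2$.

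For part (1), the implication $w^*\Rightarrow$ WOT is immediate since rank-one operators are compact. For the converse I would fix $\{\rho_n\}\subset\rT(\cH)$ with $\|\rho_n\|_1\le M$ converging in WOT to $\rho$, and an arbitrary $A\in\rK(\cH)$ with Schmidt decomposition $A=\sum_i\sigma_i(A)\bg_i\bbf_i^\vee$ and rank-$N$ truncation $A_N$, for which $\|A-A_N\|=\sigma_{N+1}(A)\to 0$. On $A_N$ the pairing $\tr(A_N\rho_n)=\sum_{i=1}^N\sigma_i(A)\langle\rho_n\bg_i,\bbf_i\rangle$ converges to $\tr(A_N\rho)$ by WOT. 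Combined with $|\tr(BS)|\le\|B\|\,\|S\|_1$ this yields
\[
|\tr(A(\rho_n-\rho))|\le\sigma_{N+1}(A)(M+\|\rho\|_1)+|\tr(A_N(\rho_n-\rho))|;
\]
given $\varepsilon>0$, first choose $N$ large enough to make the tail term small uniformly in $n$, then send $n\to\infty$ to kill the second term.

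Part (2) follows the same pattern inside the Hilbert space $\rT_2(\cH)$, where weak convergence is the convergence of $\tr(\rho_n C)$ for every $C\in\rT_2(\cH)$. The specialization $C=\bg\bbf^\vee$ recovers WOT, so weak $\Rightarrow$ WOT is free. For the converse, given $\|\rho_n\|_2\le M$ with $\rho_n\to\rho$ in WOT, I would approximate $C\in\rT_2(\cH)$ by its Schmidt truncation $C_N$, for which $\|C-C_N\|_2\to 0$, and invoke Lemma~\ref{prodHSop} (which gives $|\tr(\rho S)|\le\|\rho\|_2\|S\|_2$) to obtain
\[
|\tr((\rho_n-\rho)C)|\le(M+\|\rho\|_2)\|C-C_N\|_2+|\tr((\rho_n-\rho)C_N)|,
\]
and finish exactly as before.

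I do not expect any serious obstacle: this is the standard uniform-boundedness-plus-density trick, and all the tools (Schmidt decomposition, truncation in the appropriate norm, and Lemma~\ref{prodHSop}) have already been developed in the excerpt. The only point that genuinely requires the boundedness hypothesis is the tail estimate, which would otherwise fail to be uniform in $n$; on unbounded sets the two topologies really do differ.
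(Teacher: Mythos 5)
Your argument is correct and is essentially the paper's own proof: the $w^*\Rightarrow$WOT direction via rank-one compacts, and the converse via density of finite-rank operators (Schmidt truncations) in $\rK(\cH)$ resp.\ $\rT_2(\cH)$ together with the uniform bound on $\|\rho_n\|_1$ resp.\ $\|\rho_n\|_2$. You merely make explicit the tail estimate $|\tr(BS)|\le\|B\|\,\|S\|_1$ (and its Hilbert--Schmidt analogue) that the paper leaves implicit in the phrase ``is approximated by a finite rank operator.''
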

\begin{proof} (1) First suppose that $\rho_n\overset{w^*}{\to}\rho$ in $\rT(\cH)$.  Then $\lim_{n\to\infty}\tr A\rho_n=\tr A \rho$ for each $A\in\rK(\cH)$.  Assume that $A$ is a rank one operator: $A=\bx\by^\vee$.  Then $\tr A\rho=\tr \rho A=\langle \rho\bx,\by\rangle$.   Hence $w^*$ convergence yields weak operator convergence in $\rT(\cH)$.  Second suppose that $\rho_n\overset{w.o.t.}{\to}\rho$.  Then $\lim_{n\to\infty}\tr A\rho_n=\tr A \rho$ for each rank one operator.  Hence this equality holds for each finite rank operator.  Since each compact $A$ is approximated by a finite rank operator in the operator norm on $\rK(\cH)$ it follows that the convergence in w.o.t. yield the convergence in $w^*$ topology.

(2) The proof of (2) is similar to the proof of (1).
\end{proof}

As the $\rK(\cH)$, viewed as a metric space with respect to the distance $d(A,B)=\|A-B\|$, is a complete separable space, it follows that every bounded sequence $\{\rho_n\}, n\in\N$ in $\rT_1(\cH)$ has a convergent subsequence $\rho_{n_k}\overset{w^*}{\to}\rho$.  We give a constructive version of this result using SVD decomposition and Lemma \ref{wnormconvH}:
\begin{lemma}\label{weaktoplemma}  Suppose that $A_n\in \rT(\cH)$ and $\|A_n\|_1\le K$ for $n\in\N$.  Then there exists a subsequence $\{n_k\}$ such that $A_{n_k}\overset{w.o.t.}{\to} A\in \rT_1(\cH)$.  Furthermore $\|A\|_1\le \liminf\|A_{n_k}\|_1$.  Assume in addition that $A_n\in\rT_+(\cH)$ for $n\in\N$.  Then
\begin{eqnarray*}
A\in \rT_+(\cH),\; \|A\|_1=\tr A\le \liminf_{n_k}\tr A_{n_k}.
\end{eqnarray*}
\end{lemma}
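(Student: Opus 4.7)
The plan is to give a constructive extraction following the hint in the text, using the Schmidt decomposition of each $A_n$ together with Lemma \ref{wnormconvH}. Write $A_n=\sum_{i=1}^{\infty}\sigma_i(A_n)\bg_{i,n}\bbf_{i,n}^\vee$ with singular values nonincreasing to zero. Since $\sum_{i=1}^{\infty}\sigma_i(A_n)=\|A_n\|_1\le K$, monotonicity yields $i\,\sigma_i(A_n)\le\sigma_1(A_n)+\cdots+\sigma_i(A_n)\le K$, giving the uniform bound $\sigma_i(A_n)\le K/i$ for all $i,n\in\N$. A standard diagonal argument produces a subsequence along which $\sigma_i(A_{n_k})\to\sigma_i\ge 0$ for every $i\in\N$; the limits inherit $\sigma_i\ge\sigma_{i+1}$ and $\sigma_i\le K/i$. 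Each $\bg_{i,n},\bbf_{i,n}$ is a unit vector, so by Lemma \ref{wnormconvH}(2) and a further diagonal extraction we may assume $\bg_{i,n_k}\overset{w}{\to}\bg_i$ and $\bbf_{i,n_k}\overset{w}{\to}\bbf_i$ for every $i$, with $\|\bg_i\|,\|\bbf_i\|\le 1$ by Lemma \ref{wnormconvH}(3).

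Define $A=\sum_{i=1}^{\infty}\sigma_i\,\bg_i\bbf_i^\vee$. Fatou's lemma for the counting measure gives $\sum_{i}\sigma_i\le \liminf_k\sum_{i}\sigma_i(A_{n_k})=\liminf_k\|A_{n_k}\|_1\le K$, so the rank-one series defining $A$ converges absolutely in $\|\cdot\|_1$, places $A$ in $\rT(\cH)$, and yields the semicontinuity $\|A\|_1\le\sum_i\sigma_i\|\bg_i\|\|\bbf_i\|\le\sum_i\sigma_i\le\liminf_k\|A_{n_k}\|_1$. The remaining step, which I expect to be the main obstacle, is proving $A_{n_k}\overset{w.o.t.}{\to}A$: for $\bx,\by\in\cH$ one has $\langle A_{n_k}\bx,\by\rangle=\sum_{i=1}^{\infty}\sigma_i(A_{n_k})\langle\bx,\bbf_{i,n_k}\rangle\langle\bg_{i,n_k},\by\rangle$, and each fixed-$i$ summand converges by construction, so the only issue is uniform tail control. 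This is supplied by $\sigma_i(A_{n_k})\le K/(N+1)$ for $i>N$: combining this with Cauchy--Schwarz and Bessel's inequality for the orthonormal families $\{\bbf_{i,n_k}\}_i$ and $\{\bg_{i,n_k}\}_i$ yields
\[
\Bigl|\sum_{i>N}\sigma_i(A_{n_k})\langle\bx,\bbf_{i,n_k}\rangle\langle\bg_{i,n_k},\by\rangle\Bigr|\le \frac{K}{N+1}\|\bx\|\|\by\|,
\]
uniformly in $k$, and the analogous estimate holds for the tail of $\langle A\bx,\by\rangle$ since $\sigma_i\le K/i$. Given $\varepsilon>0$, fix $N$ with $K\|\bx\|\|\by\|/(N+1)<\varepsilon/3$; passing $k\to\infty$ in the finite head $\sum_{i\le N}$ and combining the two tail bounds gives $|\langle (A_{n_k}-A)\bx,\by\rangle|<\varepsilon$ for all large $k$.

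Finally, when $A_n\in\rT_+(\cH)$ the spectral decomposition forces $\bbf_{i,n}=\bg_{i,n}$, so after extraction $\bbf_i=\bg_i$ and $A=\sum_i\sigma_i\bg_i\bg_i^\vee$, which satisfies $\langle A\bx,\bx\rangle=\sum_i\sigma_i|\langle\bx,\bg_i\rangle|^2\ge 0$, placing $A$ in $\rT_+(\cH)$. Then $\|A\|_1=\tr A$ and $\|A_{n_k}\|_1=\tr A_{n_k}$, so the trace-norm semicontinuity established above specializes to $\tr A\le\liminf_k\tr A_{n_k}$, completing the proof.
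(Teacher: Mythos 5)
Your proposal follows essentially the same route as the paper: Schmidt decomposition of each $A_n$, a Cantor diagonal extraction of the singular values and of the weakly convergent singular vectors, definition of $A=\sum_i\sigma_i\bg_i\bbf_i^\vee$, and a head/tail split to verify the weak operator convergence. The one place you genuinely diverge is the tail estimate for $A_{n_k}$: the paper bounds the tail operator $C_k=\sum_{i>N}\sigma_i(A_{n_k})\bg_{i,n_k}\bbf_{i,n_k}^\vee$ by its operator norm $\sigma_{N+1}(A_{n_k})$, which requires waiting until $k$ is large enough that $\sigma_{N+1}(A_{n_k})<\varepsilon$, whereas your bound $\sigma_i(A_{n_k})\le K/(N+1)$ combined with Cauchy--Schwarz and Bessel for the orthonormal families $\{\bbf_{i,n_k}\}_i$, $\{\bg_{i,n_k}\}_i$ is uniform in $k$; both are correct, and yours is marginally cleaner. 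One small repair is needed for the tail of $A$ itself: the limit vectors $\bg_i,\bbf_i$ are only weak limits of orthonormal systems and need not be orthonormal, so Bessel is unavailable there, and the bound $\sigma_i\le K/i$ alone does not make $\sum_{i>N}\sigma_i$ small (harmonic tail). The correct justification, which the paper uses, is simply that $\sum_i\sigma_i\le K$ converges, so one chooses $N$ with $\sum_{i>N}\sigma_i<\varepsilon$ and bounds each term by $\sigma_i\|\bx\|\,\|\by\|$. With that adjustment your argument, including the positive semidefinite case, is complete and matches the paper's.
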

\begin{proof}   Write down the singular value decomposition for each $A_n$:
\[A_n=\sum_{i=1}^\infty \sigma_i(A_n)\bg_{i,n}\bbf_{i,n}^\vee.\]
  Recall that $\{\sigma_i(A_n)\}$ is a nonincreasing nonnegative sequence such that $\sum_{i=1}^\infty \sigma_i(A_n)\le K$ for each $n\in\N$.  Furthermore, the two sets $\{\bg_{i,n}\},\{\bbf_{i,n}\}, i\in \N$ are orthonormal sets of vectors in $\cH$.  Use the Cantor diagonal principle to construct a subsequence $n_k,k\in\N$ such that
\[\lim_{k\to\infty}\sigma_i(A_{n_k})=\sigma_i,\;\bg_{i,n_k}\overset{w}{\to} \bg_i,\; \bbf_{i,n_k}\overset{w}{\to} \bbf_i \textrm{ for all } i\in\N.\]
Clearly, $\{\sigma_i\}$ is a nonnegative nonincreasing sequence such that for each $N\in\N$ one has
\[\sum_{i=1}^N \sigma_i  =\lim_{k\to\infty} \sum_{i=1}^N \sigma_{i}(A_{n_k})\le K.\]
Hence $\sum_{i=1}^\infty \sigma_i \le K$ and  $\lim_{i\to \infty}\sigma_i=0$. %

Let
$D_m=\sum_{i=1}^m \sigma_i \bg_i\bbf_i^\vee$ be a finite rank operator for $m\in\N$.  Hence $D_m\in\rT(\cH)$.
Then for $p>m$ we have that
\begin{eqnarray*}
\|D_p-D_m\|_1\le \sum_{i=m+1}^p \sigma_i\|\bg_i\| \|\bbf_i\|\le \sum_{i=m+1}^\infty \sigma_i\|\bg_i\| \|\bbf_i\|\le \sum_{i=m+1}^\infty \sigma_i.
\end{eqnarray*}
Thus $\{D_m\}$ is a Cauchy sequence in $\rT(\cH)$ which converges to $A=\sum_{i=1}^\infty \sigma_i \bg_i\bbf_i^\vee$, such that
\[\|A\|_1\le \sum_{i=1}^{\infty} \|\sigma_i\bg_i\bbf_i^\vee\|_1=\sum_{i=1}^{\infty}\sigma_i\|\bg_i\| \|\bbf_i\|\le \sum_{i=1}^\infty \sigma_i\le K. \]
Clearly, $\|A\|_1\le \liminf\|A_{n_k}\|_1$.

It is left to show that $A_{n_k}\overset{w.o.t.}{\to} A$.
Assume  for simplicity of the exposition of the following two assumptions.  First,  $n_k=k $ for $k\in\N$.  Second, the given $\bx,\by\in\cH$ satisfy $\|\bx\|,\|\by\|\le 1$.  To show that $A_k\overset{w.o.t.}{\to} A$ it is enough to show the following:
Let $\varepsilon>0$ be given.  Then there exists $K(\varepsilon)=K(\varepsilon,\bx,\by)\in\N$ such that for $k>K(\varepsilon)$ we have $|\langle (A_k-A)\bx,\by\rangle|<3\varepsilon$.

As $\sigma_k\ge 0, k\in\N$ and $\sum_{k=1}^\infty \sigma_k\le K$, there exists $N\in\N$ such that $\sum_{k=N}^\infty \sigma_k<\varepsilon$.  In particular, $\sigma_N<\varepsilon$.  We now let
\begin{eqnarray*}
&&B_k=\sum_{i=1}^N\sigma_i(A_k)\bg_{i,k}\bbf_{i,k}^\vee, \quad C_k=\sum_{i=N+1}^\infty\sigma_i(A_k)\bg_{i,k}\bbf_{i,k}^\vee,\\
&&B=\sum_{i=1}^N\sigma_i\bg_{i}\bbf_{i}^\vee, \quad C=\sum_{i=N+1}^\infty\sigma_i\bg_{i}\bbf_{i}^\vee
\end{eqnarray*}
Thus $A_k=B_k+C_k$ and $A=B+C$.  Clearly, $B_k\overset{w.o.t.}{\to} B$.  Hence, there exists $K_1(\varepsilon)=K_1(\varepsilon,\bx,\by)\in\N$ such that for $k>K_1(\varepsilon)$ one has $|\langle (B_k-B)\bx,\by\rangle|<\varepsilon$.  As $\lim_{k\to\infty}\sigma_N(A_k)=\sigma_N<\varepsilon$ it follows that there exists $K_2(\varepsilon)$ such that for $k>K_2(\varepsilon)$ $\sigma_N(A_k)< \varepsilon$.
As $\sigma_i(A_k),i\in\N$ is a nonincreasing sequence, it follows that $\sigma_i(A_k)<\varepsilon$ for $i\ge N$ and $k>K_2(\varepsilon)$.   Note that the above  expansion of $C_k$ is the SVD expansion of $C_k$ it follows that $\|C_k\|=\sigma_{N+1}(A_k)$.  Hence $\|C_k\|<\varepsilon$ for $k>K_2(\varepsilon)$.  Therefore
$|\langle C_k\bx,\by\rangle|\le \|C_k\|\|\bx\|\by\|<\varepsilon$ for $k>K_2(\varepsilon)$.
Observe next
\[\|C\|\le \sum_{i=N+1}^{\infty} \|\sigma_i\bg_i\bbf_i^\vee\|\le \sum_{i=N+1}^\infty \sigma_i<\varepsilon.\]
Hence $|\langle C\bx,\by\rangle|<\varepsilon$.  Set $K(\varepsilon)=\max(K_1(\varepsilon),K_2(\varepsilon))$.  Then
 \[|\langle (A_k-A)\bx,\by\rangle|\le |\langle (B_k-B)\bx,\by\rangle| +|\langle C_k\bx,\by\rangle|+|\langle C_k\bx,\by\rangle|<3\varepsilon.\]

 Assume in addition that $A_n\in\rT_+(\cH)$ for $n\in\N$.  Then $\bbf_{i,n}=\bg_{i,n}$ for $i,n\in\N$.  Clearly $\|A_n\|_1=\tr A_n$ for $n\in\N$.  Observe next that $A=\sum_{i=1}^n \sigma_i\bg_i\bg_i^\vee\in \rS(\cH)$.  Hence $\langle A\bx,\bx\rangle=\sum_{i=1}^\infty \sigma_i|\langle\bx,\bg_i\rangle|^2$. Therefore $A\in\rT_+(\cH)$.  Thus $\|A\|=\tr A\le \liminf_{n_k} \tr A_{n_k}$.
\end{proof}

We now consider the following simple example: $A_n=\be_n\be_n^\vee\in\rT(\cH), n\in\N$.  Observe that $\sigma_1(A_n)=1$  and $\sigma_i(A_n)=0$ for $i>1$.  Thus $\|A_n\|_1=1$ for $n\in\N$.
Clearly,  $A_n\overset{w.o.t.}{\to}0$.  Hence $A_n\overset{w^*}{\to}0$.  However the sequence $\{A_n\}\subset \rT_1(\cH)$ does not converge to $0$ in the weak topology on $\rT(\cH)$.  Indeed, take the linear functional $A\mapsto \tr A I$, where $I\in\rB(\cH)$ is the identity operator.  Then $\tr A_n I=1$ for $n\in\N$.

We now bring an analog of part (3) of Lemma \ref{wnormconvH} which is due to Davies\cite[Lemma 4.3]{Dav69}:
\begin{lemma}\label{weaknormconv1}
Let $A_n\in\rT_{+}(\cH),n\in\N$ and assume that $A_n\overset{w.o.t.}{\to}A\in\rT(\cH)$.  Then
\begin{eqnarray*}
\lim_{n\to\infty} \|A_n -A\|_1= 0 \iff \lim_{n\to\infty} \tr A_n=\tr A.
\end{eqnarray*}
\end{lemma}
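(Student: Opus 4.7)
The forward implication is immediate from $|\tr(A_n-A)| \le \|A_n-A\|_1$, which is part of Lemma \ref{tracein}. The substance is the converse, which I would prove as follows.

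First I would invoke Lemma \ref{weaktoplemma} to conclude that $A \in \rT_+(\cH)$ (so $\|A\|_1 = \tr A$ and $\|A_n\|_1 = \tr A_n$), and note that uniform boundedness together with $\tr A_n \to \tr A$ gives a constant $K$ with $\|A_n\|_1, \|A\|_1 \le K$. Given $\varepsilon > 0$, use the spectral decomposition $A = \sum_{i} \lambda_i \be_i \be_i^\vee$ (with $\{\be_i\}$ an orthonormal set of eigenvectors) to select $N$ so large that, letting $P = \sum_{i \le N} \be_i \be_i^\vee$ be the finite rank projection onto the span of the top $N$ eigenvectors, one has $\tr\bigl((I-P)A(I-P)\bigr) = \tr A - \tr(PAP) < \varepsilon$.

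The key decomposition is
\begin{eqnarray*}
A_n - A &=& \bigl(PA_nP - PAP\bigr) + P A_n(I-P) + (I-P)A_n P + (I-P)A_n(I-P) \\
&& - P A(I-P) - (I-P)A P - (I-P)A(I-P).
\end{eqnarray*}
For the finite rank piece, $P A_n P$ is supported in the $N$-dimensional space $P\cH$ and its matrix entries $\langle A_n \be_i, \be_j\rangle$ converge (by w.o.t. convergence) to those of $PAP$; hence $\|PA_nP - PAP\|_1 \to 0$, since all norms on a finite dimensional space are equivalent. For the diagonal tail, cyclicity of the trace gives $\tr\bigl((I-P)A_n(I-P)\bigr) = \tr A_n - \tr(PA_nP) \to \tr A - \tr(PAP) < \varepsilon$, so for $n$ large this is $< 2\varepsilon$; since $(I-P)A_n(I-P) \in \rT_+(\cH)$, its trace norm equals its trace, hence $\|(I-P)A_n(I-P)\|_1 < 2\varepsilon$. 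The analogous bound for $A$ is $< \varepsilon$.

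The main obstacle — the off-diagonal blocks like $P A_n(I-P)$ — I would handle via the positivity of $A_n$: write $A_n = A_n^{1/2} A_n^{1/2}$ and apply Lemma \ref{prodHSop} to obtain
\begin{eqnarray*}
\|P A_n (I-P)\|_1 \le \|P A_n^{1/2}\|_2\, \|A_n^{1/2}(I-P)\|_2 = \sqrt{\tr(PA_nP)}\,\sqrt{\tr((I-P)A_n(I-P))} \le \sqrt{K}\sqrt{2\varepsilon},
\end{eqnarray*}
and symmetrically for $(I-P)A_nP$, $PA(I-P)$, $(I-P)AP$. Summing the seven pieces yields $\|A_n - A\|_1 \le o(1) + C\sqrt{\varepsilon}$ as $n \to \infty$ for a constant $C$ depending only on $K$. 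Since $\varepsilon>0$ was arbitrary, $\|A_n - A\|_1 \to 0$. The only subtlety is recognizing that the factorization through $A_n^{1/2}$ is precisely what replaces the Cauchy--Schwarz argument available for scalars and makes the cross terms controllable by the small trace of the diagonal tail.
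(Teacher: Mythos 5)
Your proposal is correct, and it takes a genuinely different route from the paper. The paper argues by contradiction: it extracts a subsequence along which the eigenvalues $\sigma_i(A_{n_k})$ converge and the eigenvectors $\bg_{i,n_k}$ converge weakly, uses the chain $\tr A=\sum_i\sigma_i\|\bg_i\|^2\le\sum_i\sigma_i\le\tr A$ (forced by the trace hypothesis) to conclude $\|\bg_i\|=1$ whenever $\sigma_i>0$, upgrades the weak convergence of eigenvectors to norm convergence via part (3) of Lemma \ref{wnormconvH}, and then estimates $\|A_{n_k}-A\|_1$ by splitting the spectral sums into a head and a tail. You instead work directly, with no subsequence extraction and no spectral analysis of the $A_n$ at all: you fix a finite-rank spectral projection $P$ of the limit $A$, split $A_n-A$ into the corner blocks, kill the finite-dimensional corner by w.o.t.\ convergence plus equivalence of norms, kill the diagonal tail by the trace hypothesis plus positivity (trace norm equals trace), and — this is the key substitute for the paper's eigenvector analysis — control the off-diagonal blocks by factoring through $A_n^{1/2}$ and applying the Hilbert--Schmidt Cauchy--Schwarz bound of Lemma \ref{prodHSop}, so that a small diagonal tail forces small off-diagonal corners. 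Your argument is shorter and avoids the case analysis the paper needs (simple versus repeated or vanishing eigenvalues); the paper's argument yields, as a byproduct, convergence of the individual eigenvalues and eigenvectors of $A_n$, information your proof deliberately bypasses. Both proofs are complete for the stated claim.
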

\begin{proof}  Lemma \ref{weaktoplemma} yields that $A\in\rT_{+}(\cH)$ and $\|A\|_1=\tr A$.  Hence
\begin{eqnarray*}
\lim_{n\to\infty} \|A_n -A\|_1=0 \Rightarrow \lim_{n\to\infty} \|A_n\|_1=\|A\|_1\Rightarrow \lim_{n\to\infty} \tr A_n=\tr A.
\end{eqnarray*}

Assume now $\lim_{n\to\infty} \tr A_n=\tr A$.  Assume to the contrary that the sequence $\{A_n\}$ does not converge  to $A$ in norm in $\rT_1(\cH)$.  Hence there exists $\varepsilon_0>0$ and subsequence
$\{A_{m_k}\}$ such that $\|A_{m_k} -A\|_1\ge \varepsilon_0 $ for all $k\in\N$.  To show a contradiction we can assume without loss of generality that $m_k=k, k\in\N$.
Assume that each $A_n$ has the following spectral decomposition:
\begin{eqnarray*}
A_n=\sum_{i=1}^\infty \sigma_i(A_n)\bg_{i,n}\bg_{i,n}^\vee, \quad \langle \bg_i,\bg_j\rangle =\delta_{ij}, i,j\in\N.
\end{eqnarray*}
As in the proof of Lemma \ref{weaktoplemma} there exists
a subsequence $n_k,k\in\N$ such that
\begin{eqnarray*}
&&\lim_{k\to\infty}\sigma_i(A_{n_k})=\sigma_i,\;\bg_{i,n_k}\overset{w}{\to} \bg_i,\;  \textrm{ for all } i\in\N,\\
&&A=\sum_{i=1}^\infty \sigma_i\bg_i\bg_i^\vee.
\end{eqnarray*}
Here $\{\sigma_i\}$ is a nonnegative nonincreasing sequence with $\sum_{i=1}^\infty \sigma_i\le \liminf \|A_{n_k}\|=\tr A$.  As $\|\bg_i\|\le 1, i\in\N$, from the arguments of the proof of Lemma \ref{weaktoplemma}  it follows that
$$\tr A=\sum_{i=1}^\infty \sigma_i \|\bg_i\|^2\le \sum_{i=1}^\infty \sigma_i\le \tr A.$$
Hence for each $\sigma_i>0$ we deduce that $\|\bg_i\|=1$.  Use part (3) of Lemma \ref{wnormconvH} to deduce that
$\lim_{n\to\infty} \|\bg_{i,n}-\bg_i\|=0$ for each $\sigma_i>0$.

Let us now assume the more difficult case: $\sigma_i>0$ for $i\in\N$.  Fix $\varepsilon \in (0,1/8)$.  Then there exists $M(\varepsilon)$ so that $$\sum_{i=1}^{M(\varepsilon)}\sigma_i \ge (1-\varepsilon)\sum_{i=1}^{\infty}\sigma_i =(1-\varepsilon)\tr A.$$
Hence $\sum_{M(\varepsilon)+1}^\infty \sigma_1\le\varepsilon \tr A$.
The assumption that $\lim_{n\to\infty} \tr A_n=\tr A$ yields that there exists $N(\varepsilon)$ such that for $n>N(\varepsilon)$ the inequality $\tr A_n\le (1+\varepsilon)\tr A$ holds.  As $\lim_{k\to\infty}\sigma_i(A_{n_k})=\sigma_i$ for $i\in\N$ we deduce that there exists $N_1(\varepsilon)$ such that for $k>N_1(\varepsilon)$ the inequality
\begin{eqnarray*}
|\sum_{i=1}^{M(\varepsilon)} \sigma_i(A_{n_k})-\sum_{i=1}^{M(\varepsilon)} \sigma_i|\le \sum_{i=1}^{M(\varepsilon)}|\sigma_i(A_{n_k})-  \sigma_i|\le \varepsilon \tr A
\end{eqnarray*}
holds.  Hence
\begin{eqnarray*}
&&\sum_{i=1}^{M(\varepsilon)} \sigma_i(A_{n_k})\ge (1-2\varepsilon)\tr A \textrm{ for } k>N_1(\varepsilon),\\
&&\sum_{i=M(\varepsilon)+1}^\infty \sigma_i(A_{n_k})\le 3\varepsilon\tr A  \textrm{ for } k>\max(N(\varepsilon),N_1(\varepsilon)).
\end{eqnarray*}
We now estimate from above $\|A_{n_k}-A\|_1$ for $n>\max(N(\varepsilon),N_1(\varepsilon))$:
\begin{eqnarray*}
&&\|A_{n_k}-A\|_1=\|\sum_{i=1}^\infty \sigma_i(A_{n_k})\bg_{i,n_k}\bg_{i,n_k}^\vee -\sum_{i=1}^\infty\sigma_i\bg_i\bg_i^\vee\|_1\le\\
&&\sum_{i=1}^{M(\varepsilon)}\|\sigma_i(A_{n_k})\bg_{i,n_k}\bg_{i,n_k}^\vee-\sigma_i\bg_i\bg_i^\vee\|_1 +\\ &&\sum_{i=M(\varepsilon)+1}^\infty \|\sigma_i(A_{n_k})\bg_{i,n_k}\bg_{i,n_k}^\vee\|_1 +\sum_{i=M(\varepsilon)+1}^\infty\|\sigma_i\bg_i\bg_i^\vee\|_1\le\\
&&\sum_{i=1}^{M(\varepsilon)}\|\sigma_i(A_{n_k})\bg_{i,n_k}\bg_{i,n_k}^\vee-\sigma_i\bg_i\bg_i^\vee\|_1+\sum_{i=M(\varepsilon)+1}^\infty\sigma_i(A_{n_k})+
\sum_{i=M(\varepsilon)+1}^\infty\sigma_i\le\\
&&\sum_{i=1}^{M(\varepsilon)}\|\sigma_i(A_{n_k})\bg_{i,n_k}\bg_{i,n_k}^\vee-\sigma_i\bg_i\bg_i^\vee\|_1+4\varepsilon \tr A.
\end{eqnarray*}
We claim that for each $i\in\N$ the equality
$$\lim_{k\to\infty} \|\sigma_i(A_{n_k})\bg_{i,n_k}\bg_{i,n_k}^\vee-\sigma_i\bg_i\bg_i^\vee\|_1=0$$
holds. Write down
\begin{eqnarray*}
&&\sigma_i(A_{n_k})\bg_{i,n_k}\bg_{i,n_k}^\vee-\sigma_i\bg_i\bg_i^\vee=\\
&&(\sigma_i(A_{n_k})-\sigma_i)\bg_{i,n_k}\bg_{i,n_k}^\vee +
\sigma_i(A)(\bg_{i,n_k}-\bg_i)\bg_{i,n}^\vee+\sigma_i(A)\bg_i(\bg_{i,n_k}^\vee-\bg_i^\vee)
\end{eqnarray*}
We now claim that each of the above summands converges to $0$ in $\|\cdot\|_1$ norm.  First recall that $\|\bx\by^\vee\|_1=\|\bx\|\|\by\|$.  Second
\begin{eqnarray*}
\lim_{k\to\infty}|\sigma_i(A_{n_k})-\sigma_i|=0, \quad \lim_{k\to\infty}\|\bg_{i,n_k}-\bg_i\|=0 \textrm{ for all } i\in\N.
\end{eqnarray*}
Hence there exists $N_2(\varepsilon)>\max(N(\varepsilon),N_1(\varepsilon))$ such that for $k>N_2(\varepsilon)$ the inequality
\begin{eqnarray*}
\sum_{i=1}^{M(\varepsilon)}\|\sigma_i(A_{n_k})\bg_{i,n_k}\bg_{i,n_k}^\vee-\sigma_i\bg_i\bg_i^\vee\|_1\le \varepsilon \tr A.
\end{eqnarray*}
Combine all the above inequalities to deduce that $\|A_{n_k}-A\|\le 5\varepsilon\tr A$ for $k>N_2(\varepsilon)$.  Choose $\varepsilon<\frac{\varepsilon_0}{5\tr A}$ to contradict our assumption that $\|A_k-A\|_1\ge \varepsilon_0$ for $k\in\N$.
\end{proof}

We now analyze the norm convergence in $\rT_2(\cH)$.  We believe that most of the results stated in the lemma below are known to the experts.  This lemma is used in the proof of Theorem \ref{wotnrmconv}.  For a closed subspace $\bU\subset\cH$ denote by $P(\bU)\in \rK(\cH)$ the orthogonal projection on $\bU$.
\begin{lemma}\label{convT2} Let $A,B,\in \rT_2(\cH)$, and assume that $\lim_{n\to\infty}\|A_n-A\|_2=0$.  Then
\begin{enumerate}
\item
\begin{eqnarray*}
\sum_{i=1}^\infty |\sigma_i(A)-\sigma_i(B)|^2\le \|A-B\|_2^2.
\end{eqnarray*}
\item Assume that
\begin{eqnarray*}
&&A=\sum_{i=1}\sigma_i(A)\bg_i\bbf_i,\quad \langle \bg_i,\bg_j\rangle=\langle \bbf_i,\bbf_j\rangle=\delta_{ij}, i,j\in\N.\\
&&A_n=\sum_{i=1}\sigma_i(A_n)\bg_{i,n}\bbf_{i,n},\quad \langle \bg_{i,n},\bg_{j,n}\rangle=\langle \bbf_{i,n},\bbf_{j,n}\rangle=\delta_{ij}, i,j\in\N.
\end{eqnarray*}
Then $|\sigma_i(A_n)-\sigma_i(A)|\le \|A_n-A\|$ for each $i,n\in\N$.
Assume that $\sigma_i(A)>0$.  Then there exists $p,q\in\N$, $p\le i \le q$ such that
$\sigma_{p-1}(A)> \sigma_p(A)=\cdots=\sigma_q(A)>\sigma_{q+1}(A)\ge 0$.
Denote by
\begin{eqnarray*}
\bU_{p,q}=\mathrm{span}(\bg_{p},\ldots,\bg_q), \quad \bV_{p,q}=\mathrm{span}(\bbf_{p},\ldots,\bbf_q).
\end{eqnarray*}
 Then there exists $N_{p}=\cdots =N_q\in\N$ such that $\sigma_{p-1}(A_n)>\sigma_p(A_n)$ and $\sigma_{q}(A_n)>\sigma_{q+1}(A_n)$ for $n>N_p$. For $n>N_p$ denote
 \begin{eqnarray*}
\bU_{p,q,n}=\mathrm{span}(\bg_{p,n},\ldots,\bg_{q,n}), \quad \bV_{p,q,n}=\mathrm{span}(\bbf_{p,n},\ldots,\bbf_{q,n}).
\end{eqnarray*}
Then
\begin{eqnarray*}
\lim_{n\to\infty} \|P(\bU_{p,q,n})-P(\bU_{p,q})\|_1=0, \quad \lim_{n\to\infty} \|P(\bV_{p,q,n})-P(\bV_{p,q})\|_1=0.
\end{eqnarray*}
More precisely: Denote by $\bW_{p,q}\subset T_2(\cH)$ the $q-p+1$ subspace spanned by an orthonormal basis $\bg_p\bbf_p^\vee, \ldots,\bg_q\bbf_q^\vee$, and
by $\bW_{p,q,n}\subset T_2(\cH)$ the $q-p+1$ subspace spanned by an orthonormal basis
$$\bg_{p,n}\bbf_{p,n}^\vee, \ldots,\bg_{q,n}\bbf_{q,n}^\vee$$ for $n>N_p$.  Then
$$\lim_{n\to\infty}\|P(\bW_{p,q,n})-P(\bW_{p,q})\|_1=0.$$
\end{enumerate}
\end{lemma}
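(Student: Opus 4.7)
For Part (1), my plan is to reduce to the matrix Mirsky inequality and then pass to the Hilbert--Schmidt limit via finite-rank truncation. For the matrix step I would invoke the Hermitian dilation $H(M)$, the block matrix with $M$ in the upper-right, $M^\vee$ in the lower-left, and zero diagonal, whose eigenvalues are $\pm\sigma_i(M)$; the Hoffman--Wielandt inequality for selfadjoint matrices applied to $H(A^{(N)})-H(B^{(N)})$, where $A^{(N)}=\sum_{i\le N}\sigma_i(A)\bg_i\bbf_i^\vee$ is the rank-$N$ truncation of $A$ and $B^{(N)}$ is analogous, yields
\[\sum_{i=1}^\infty |\sigma_i(A^{(N)}) - \sigma_i(B^{(N)})|^2 \le \|A^{(N)} - B^{(N)}\|_2^2\]
after compressing both operators to a common finite-dimensional subspace of $\cH$ of dimension at most $4N$. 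Since $\|A^{(N)}-A\|_2$ and $\|B^{(N)}-B\|_2$ tend to $0$ and each $\sigma_i$ is continuous in $\|\cdot\|_\infty$ by Weyl's inequality, Fatou's lemma on the left delivers the stated inequality as $N\to\infty$.

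For Part (2), the bound $|\sigma_i(A_n)-\sigma_i(A)| \le \|A_n-A\|$ is Weyl's perturbation inequality, an immediate consequence of the min--max characterization
\[\sigma_i(L) = \min_{\dim \cV < i}\ \max_{\bx\perp \cV,\ \|\bx\|=1} \|L\bx\|.\]
Combining with $\|A_n-A\|_\infty \le \|A_n-A\|_2\to 0$ yields $\sigma_i(A_n)\to\sigma_i(A)$ for each $i$, and taking $n$ large enough that each perturbation is below half of $\min(\sigma_{p-1}(A)-\sigma_p(A),\ \sigma_q(A)-\sigma_{q+1}(A))$ produces the common $N_p$ beyond which the required gap structure for $A_n$ persists.

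For the convergences $P(\bU_{p,q,n}) \to P(\bU_{p,q})$ and $P(\bV_{p,q,n})\to P(\bV_{p,q})$ in trace norm, I would use Riesz spectral projections. Since $\sigma_p(A)^2$ is an isolated eigenvalue of the positive compact operator $AA^\vee$ of multiplicity $q-p+1$, choosing a small circle $\Gamma$ enclosing only $\sigma_p(A)^2$ (and, for $n>N_p$, avoiding the rest of the spectrum of $A_nA_n^\vee$) gives
\[P(\bU_{p,q}) = \frac{1}{2\pi i}\oint_\Gamma (zI - AA^\vee)^{-1}\,dz,\]
with the analogous formula for $P(\bU_{p,q,n})$. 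The operator-norm convergence $\|A_nA_n^\vee - AA^\vee\|_\infty\to 0$ propagates through the resolvent identity to uniform convergence of the resolvents on $\Gamma$, hence $\|P(\bU_{p,q,n}) - P(\bU_{p,q})\|_\infty\to 0$. Because both projections have fixed rank $q-p+1$, I upgrade to trace norm via $\|R\|_1 \le 2(q-p+1)\|R\|_\infty$ applied to the difference. The same argument with $A_n^\vee A_n$ handles $P(\bV_{p,q,n})$.

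The hard part will be the final claim $\|P(\bW_{p,q,n}) - P(\bW_{p,q})\|_1 \to 0$, because the subspace $\bW_{p,q}\subset \rT_2(\cH)$ depends on the particular orthonormal bases chosen within $\bU_{p,q}, \bV_{p,q}$ and is not intrinsically determined by $A$ when $\sigma_p(A)$ is degenerate. I would resolve this by choosing the bases of $A$ compatibly with the sequence: pass to a subsequence along which each $\bg_{i,n}$ and $\bbf_{i,n}$ ($p\le i\le q$) converges weakly; the already-established trace-norm convergence of $P(\bU_{p,q,n})$ and $P(\bV_{p,q,n})$, together with the finite dimensionality of $\bU_{p,q},\bV_{p,q}$, upgrades these weak limits to strong ones $\tilde\bg_i,\tilde\bbf_i$ of unit norm, forming orthonormal bases satisfying $A\tilde\bbf_i=\sigma_p(A)\tilde\bg_i$ and hence providing a legitimate SVD basis choice for $A$. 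Taking this compatible basis, each rank-1 element $\bg_{i,n}\bbf_{i,n}^\vee \to \bg_i\bbf_i^\vee$ in $\rT_2(\cH)$; since these $q-p+1$ rank-1 tensors form orthonormal families in the Hilbert--Schmidt inner product, the finite-rank projection $P(\bW_{p,q,n})$ converges to $P(\bW_{p,q})$ in operator norm on $\rT_2(\cH)$, and the fixed rank once more yields the stated trace-norm convergence.
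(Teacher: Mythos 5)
Your proposal is correct, but it takes a genuinely different route from the paper's in both parts. For part (1) the paper also truncates to finite rank and compresses to a common finite-dimensional subspace, but at the matrix level it expands $\|A_m-B_m\|_2^2$ and invokes the von Neumann trace inequality $\Re\tr C_mD_m^*\le\sum_i\sigma_i(C_m)\sigma_i(D_m)$, whereas you prove the same Mirsky-type bound via the Hermitian dilation and Hoffman--Wielandt; the two are interchangeable. For part (2) the divergence is larger: you get $|\sigma_i(A_n)-\sigma_i(A)|\le\|A_n-A\|$ from min--max, while the paper settles for the $\|\cdot\|_2$ bound inherited from part (1); more importantly, the paper first establishes $\|A_n^\vee A_n-A^\vee A\|_1\to 0$ by combining weak operator convergence with convergence of traces (Lemma \ref{weaknormconv1}) and then runs a Rayleigh-quotient estimate $\sigma_1(F)\ge\langle F\bbf_{1,n},\bbf_{1,n}\rangle\ge\sigma_1(F_n)-\|F_n-F\|_1$ to force $|\langle\bbf_{1,n},\bbf_1\rangle|\to 1$ --- but it only carries this out for a simple top singular value and defers the degenerate block to ``pass to the $q$-wedge product as in \cite{FP04}''. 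Your Riesz contour-integral argument, with the fixed-rank upgrade $\|R\|_1\le 2(q-p+1)\|R\|$ from operator norm to trace norm, treats the whole block $p\le i\le q$ uniformly and is in that respect more complete and self-contained than what the paper actually writes. You are also right to flag that $\bW_{p,q}$ is not intrinsic to $A$ when $\sigma_p(A)$ is degenerate (a unitary rotation of the singular basis of the block changes the span of the diagonal rank-one tensors), so the final claim can only hold for a compatible choice of SVD basis of $A$ and, as you set it up, along a subsequence on which the block singular vectors converge; your compatible-basis construction is the right fix for a point the paper's statement glosses over.
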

\begin{proof}  (1) Let $A$ have a singular value decomposition as in (2).
Assume that $B=\sum_{j=1}^\infty \sigma_i(B)\bu_i\bv_i^\vee$ be a singular value decomposition of $B$.  Define
\begin{eqnarray*}
A_m=\sum_{i=1}^m \sigma_i(A)\bg_i\bbf_i^\vee, \;\sigma_1(A)\ge\cdots\ge\sigma_l(A)>0,\;\langle\bg_i,\bg_j\rangle=\langle\bbf_i,\bbf_j\rangle=\delta_{ij}, i.j\in[m].\\
B_m=\sum_{j=1}^m \sigma_i(B)\bu_i\bv_i^\vee, \;\sigma_1(B)\ge\cdots\ge\sigma_m(B)>0,\;\langle\bu_i,\bu_j\rangle=\langle\bv_i,\bv_j\rangle=\delta_{ij}, i.j\in[l].
\end{eqnarray*}
Define
\begin{eqnarray*}
\bX_m=\textrm{span}(\bg_1,\ldots,\bg_m,\bu_1,\ldots,\bu_m), \; \bY_m=\textrm{span}(\bbf_1,\ldots,\bbf_m,\bv_1,\ldots,\bv_m).
\end{eqnarray*}
Then $A_m,B_m:\bY_m\to \bX_m$ and $A_m^\vee, B_m^\vee: \bX_m\to \bY_m$.  Thus we can  view $A_m$ and $B_m$ as  $M_m\times N_m$ complex values matrices $C_m$ and $D_m$ respectively,
 where $M_m=\dim X_m, N_m=\dim Y_m$. The positive singular values of $C_m$ and $D_m$ are identical with the positive singular values of $A_m$ and $B_m$ respectively.   Also,
 
 \begin{eqnarray*}
 \|A_m-B_m\|_2^2&=&\tr (C_m-D_m)(C_m^*-D_m^*)\\
 &=&\tr C_m C_m^*+\tr D_mD_m^* -2\Re \tr C_mD_m^*\\
&=& (\sum_{i=1}^{m} (\sigma_i^2(A_m)+\sigma_i^2(B_m))-2\Re \tr C_mD_m^*.
\end{eqnarray*}
Recall von Neumann inequality \cite[Theorem 4.11.8]{Frb16}:
$$\Re \tr C_mD_m^*\le \sum_{i=1}^M \sigma_i(C_m)\sigma_i(D_m).$$
  This shows
\begin{eqnarray*}
\sqrt{\sum_{i=1}^m (\sigma_i(A)-\sigma_i(B))^2}&\le& \|A_m-B_m\|_2\\
&=&\|(A_m-A)+(B-B_m)+(A-B)\|_2\\
&\le& \|A_m-A\|_2+\|B-B_m\|_2+\|A-B)\|_2.
\end{eqnarray*}
Let $m\to\infty$ to deduce (1).

(2) We first claim that $\lim _{n\to \infty} \|A_n^\vee A_n  -  A^\vee A\|_1=0$.
This follows from Lemma \ref{weaknormconv1}.  First note that $F_n=A_n^\vee A_n, F=A^\vee A\in \rS_{1,+}(\cH)$.  Observe next that $F_n\overset{w.o.t.}{\to}F$.  Indeed,
for a given $\bx,\by\in \cH$ we have
\begin{eqnarray*}
&&\lim_{n\to\infty} \|A_n\bx-A\bx\|=0, \; \lim_{n\to\infty} \|A_n\by-A\by\|=0\Rightarrow\\ &&\lim_{n\to\infty}\langle A_n^\vee A_n \bx,\by\rangle=\lim_{n\to\infty}\langle  A_n \bx,A_n\by\rangle=\langle A\bx,A\by\rangle=\langle A^\vee A\bx,\by\rangle.
\end{eqnarray*}
Clearly $\tr A_n^\vee A_n=\|A_n\|_2^2, \tr A^\vee A=\|A\|^2_2 $.  As $\lim_{n\to\infty} \|A_n\|_2=\|A\|_2$ we deduce that $\lim_{n\to\infty}\tr F_n=\tr F$.

Next observe that the spectral decomposition of $F_n$ and $F$ are:
\begin{eqnarray*}
&&F_n=\sum_{i=1}^\infty \sigma_i(A_n)^2 \bbf_{i,n}\bbf^\vee_{i,n}, \quad n\in\N,\\
&&F=\sum_{i=1}^\infty \sigma_i(A)^2 \bbf_{i}\bbf^\vee_{i}.
\end{eqnarray*}

As
\begin{eqnarray*}
|\sigma_i(A_n)-\sigma_i(A)|\le (\sum_{i=1}^{\infty}(\sigma_i(A_n)-\sigma_i(A))^2)^{1/2}\le \|A_n-A\|_2
\end{eqnarray*}
It follows that $\lim_{n\to\infty}\sigma_i(A_n)=\sigma_i(A)$ for each $i\in\N$.  Assume that $\sigma_q(A)>\sigma_{q+1}(A)$.  Then there exists $N_q$ such that for $n> N_q$ one has the inequalities:
\begin{eqnarray*}
\sigma_q(A_n)>(\sigma_q(A)+\sigma_{q+1}(A))/2>\sigma_{q+1}(A).
\end{eqnarray*}

Let $\bV_q, \bV_{q,n}$ be the projection on the subspace spanned by  $\bbf_1,\ldots,\bbf_q$ and by $\bbf_{1,n},\ldots,\bbf_{q,n}$ respectively.  Set
\begin{eqnarray*}
\bbf_{i,n,q}=\sum_{j=1}^q \langle \bbf_{i,n},\bbf_j\rangle \bbf_j, \quad i\in[q], n\in\N.
\end{eqnarray*}
Assume first the simplest case where $q=1$: $\sigma_1(A)>\sigma_2(A)$.
Recall that  $\sigma_1^2(A_n)=\sigma_1(F_n)=\langle F_n\bbf_{1,n}, \bbf_n\rangle$.
Next observe the inequality
\begin{eqnarray*}
&&|\langle (F_n-F)\bbf_{1,n}, \bbf_{1,n}\rangle|=|\tr(A_n -A)(\bbf_{1,n}\bbf_{1,n}^\vee)|\le \\
&&\|F_n-F\|_1\| (\bbf_{1,n}\bbf_{1,n}^\vee)\|= \|F_n-F\|_1.
\end{eqnarray*}
Here $ \| (\bbf_{1,n}\bbf_{1,n}^\vee)\|$ is the operator norm of $\bbf_{1,n}\bbf_{1,n}^\vee$.  Hence it is equal to $1$.
The maximum principle for $\sigma_1(F)$, the maximum eigenvalue of $F$ yields
\begin{eqnarray*}
\sigma_1(F)\ge \langle F\bbf_{1,n}, \bbf_{1,n}\rangle\ge\sigma_1(F_n)-\|F_n-F\|_1.
\end{eqnarray*}
Recall that $\lim_{n\to\infty}\sigma_1(F_n)=\sigma_1(F)$ and $\lim_{n\to\infty} \|F_n-F\|_1=0$.  Thus $\lim_{n\to\infty}  \langle F\bbf_{1,n}, \bbf_{1,n}\rangle=\sigma_1(F)$.
Observe next
\begin{eqnarray*}
 \langle F\bbf_{1,n}, \bbf_{1,n}\rangle&=&\sum_{i=1}^\infty \sigma_i^2(A)|\langle \bbf_{1,n},\bbf_{1}\rangle|^2\\
&\le& \sigma_1(A)^2 |\langle \bbf_{1,n},\bbf_{1}\rangle|^2+\sigma_2(A)^2 (1-|\langle \bbf_{1,n},\bbf_{1}\rangle|^2).
\end{eqnarray*}
Hence $\lim_{n\to\infty} |\langle \bbf_{1,n},\bbf_{1}\rangle|=1$, which is equivalent to $\lim_{n\to\infty}\|P(\bV_{1,n})-P(\bV_1)\|_1=0$.  (This is equivalent to that we could choose the phase of $\bbf_{1,n}$ so that $\lim_{n\to\infty}\|\bbf_{1,n}-\bbf_1\|=0$.)

In the  general case one needs to pass to the $q$-wedge product $\wedge^q A_n, \wedge^q A$ as in \cite{FP04}.
\end{proof}

\end{document}